\documentclass[12pt]{amsart}
%for arXiv
% \pdfoutput=1
% \usepackage[pagewise]{lineno}\linenumbers
% To fix lineno compilation error
%\makeatletter
%\providecommand{\@LN}[2]{}
%\makeatother

\usepackage[backend=biber, style=ext-alphabetic, sorting=nyt, articlein=false, backref=false, url=false, isbn=false, giveninits=true, maxbibnames=99]{biblatex}
\addbibresource{VDF.bib}
\DeclareFieldFormat{postnote}{#1}
\DeclareFieldFormat{multipostnote}{#1}

\usepackage{amssymb}
\usepackage[foot]{amsaddr}
\usepackage[margin=1in]{geometry}
\usepackage[hidelinks,pdfusetitle]{hyperref}
\usepackage{bm}
\usepackage[Symbol]{upgreek}
\usepackage{mathtools}
\usepackage{xcolor}
\usepackage{enumitem}
%Lowercase roman enumerate
% \renewcommand{\theenumi}{\roman{enumi}}
% \setlist[enumerate,1]{label=\textnormal{(\roman*)}}
% \setlist[enumerate, 1]{label=(\arabic*)}
\setlist[enumerate, 1]{label=(\arabic*),leftmargin=4em}
\AtBeginEnvironment{thm}{\setlist[enumerate,1]{label=(\roman*),font=\upshape,leftmargin=4em}}
\AtBeginEnvironment{thmint}{\setlist[enumerate,1]{label=(\roman*),font=\upshape,leftmargin=4em}}
\AtBeginEnvironment{lem}{\setlist[enumerate,1]{label=(\roman*),font=\upshape,leftmargin=4em}}
\AtBeginEnvironment{prop}{\setlist[enumerate,1]{label=(\roman*),font=\upshape,leftmargin=4em}}
\AtBeginEnvironment{cor}{\setlist[enumerate,1]{label=(\roman*),font=\upshape,leftmargin=4em}}

\newcommand\myitem[1][]{\item[#1]\refstepcounter{enumi}}

\usepackage{lmodern}
\usepackage[T1]{fontenc}
\usepackage{microtype}

\DeclareMathOperator{\res}{res}

\DeclareMathOperator{\cf}{cf}

\DeclareMathOperator{\tp}{tp}
\DeclareMathOperator{\qftp}{qftp}

\DeclareMathOperator{\h}{h}

\def\d{\operatorname{d}}
\def\dh{\operatorname{dh}}
\def\nl{\operatorname{nl}}
\def\dhl{\operatorname{dhl}}
\def\dalg{\operatorname{dalg}}
\def\ac{\operatorname{ac}}
\def\rc{\operatorname{rc}}

\def\divop{\operatorname{div}}

\def\sm{\operatorname{small}}
\def\codf{\operatorname{codf}}

\def\r{\operatorname{r}}

\def\LE{\operatorname{LE}}

\newcommand{\ring}{\textnormal{ring}}

\newcommand{\deft}[1]{\textbf{\textup{#1}}}
\newcommand{\ca}{\mathcal}
\newcommand{\bb}{\mathbb}
\newcommand{\llp}{(\!(}
\newcommand{\rrp}{)\!)}
\newcommand{\pconv}{\rightsquigarrow}
\newcommand{\prece}{\preccurlyeq}
\newcommand{\succe}{\succcurlyeq}
\newcommand{\x}{\times}
\newcommand{\f}{\mathfrak}

\newcommand{\0}{\emptyset}

\newcommand{\ges}{\geqslant}
\newcommand{\les}{\leqslant}
\newcommand{\N}{\mathbb{N}}
\newcommand{\Z}{\mathbb{Z}}
\newcommand{\Q}{\mathbb{Q}}
\newcommand{\R}{\mathbb{R}}
\newcommand{\T}{\mathbb{T}}

% Definition of symbol \der for derivation:
\DeclareFontFamily{U}{fsy}{}
\DeclareFontShape{U}{fsy}{m}{n}{<->s*[.9]psyr}{}
\DeclareSymbolFont{der@m}{U}{fsy}{m}{n}
\DeclareMathSymbol{\der}{\mathord}{der@m}{182}
% Definition of symbol \cao for small calligraphic "o":
\DeclareFontFamily{OMS}{smallo}{}
\DeclareFontShape{OMS}{smallo}{m}{n}{<->s*[.65]cmsy10}{}
\DeclareSymbolFont{smallo@m}{OMS}{smallo}{m}{n}
\DeclareMathSymbol{\cao}{\mathord}{smallo@m}{79}

\newtheorem{thmint}{Theorem}

\newtheorem{lem}{Lemma}[section]
\newtheorem{prop}[lem]{Proposition}
\newtheorem{cor}[lem]{Corollary}
\newtheorem{thm}[lem]{Theorem}

\newtheorem*{ac:qe}{Theorem~\ref{ac:qe}}
\newtheorem*{preH:qe}{Theorem~\ref{preH:qe}}
\newtheorem*{preH:modcomp}{Corollary~\ref{preH:modcomp}}

\theoremstyle{definition}

\newtheorem*{defn}{Definition}
\newtheorem*{ass}{Assumption}

\numberwithin{claim}{lem}
\numberwithin{equation}{section}

\makeatletter
\newcommand{\manuallabel}[2]{\def\@currentlabel{#2}\label{#1}}
\makeatother

%No indent after section heading
\usepackage{etoolbox}

\makeatletter
\patchcmd{\@startsection}
  {\@afterindenttrue}
  {\@afterindentfalse}
  {}{}
\makeatother

\title{Model theory of differential-henselian pre-\texorpdfstring{$H$}{H}-fields}
\author{Nigel Pynn-Coates}
\address{Kurt G\"{o}del Research Center, Institute of Mathematics, University of Vienna, Austria}
\email{\href{mailto:nigel.pynn-coates@univie.ac.at}{nigel.pynn-coates@univie.ac.at}}
%\urladdr{\url{https://www.mat.univie.ac.at/~pnigel/}}
% \date{\today}

\begin{document}

\begin{abstract}
Pre-$H$-fields are ordered valued differential fields satisfying some basic axioms coming from transseries and Hardy fields.
We study pre-$H$-fields that are differential-Hensel-Liouville closed, that is, differential-henselian, real closed, and closed under exponential integration, establishing an Ax--Kochen/Ershov theorem for such structures: the theory of a differential-Hensel-Liouville closed pre-$H$-field is determined by the theory of its ordered differential residue field; this result fails if the assumption of closure under exponential integration is dropped.
In a two-sorted setting with one sort for a differential-Hensel-Liouville closed pre-$H$-field and one sort for its ordered differential residue field, we eliminate quantifiers from the pre-$H$-field sort, from which we deduce that the ordered differential residue field is purely stably embedded and if it has NIP, then so does the two-sorted structure.
Similarly, the one-sorted theory of differential-Hensel-Liouville closed pre-$H$-fields with closed ordered differential residue field has quantifier elimination, is the model completion of the theory of pre-$H$-fields with gap~$0$, and is complete, distal, and locally o-minimal.
\end{abstract}
\maketitle

%\tableofcontents
\section{Introduction}

In \cite{adamtt}, Aschenbrenner, van den Dries, and van der Hoeven study the model theory of $\T$ as an ordered valued differential field, where $\T$ is the differential field of logarithmic-exponential transseries constructed in \cite{dmm97}, there denoted by $\R\llp t \rrp^{\LE}$.
As part of this project, Aschenbrenner and van den Dries introduced in \cite{ad-hf} the elementary class of pre-$H$-fields, which are ordered valued differential fields satisfying conditions capturing some of the basic interactions between the ordering, valuation, and derivation in transseries and Hardy fields.
As such, the model theory of transseries leads naturally to the model theory of pre-$H$-fields.
Among the many results of \cite{adamtt}, for which the three received the 2018 Karp Prize from the Association for Symbolic Logic, is an effective axiomatization of the theory of $\T$ as a complete theory extending the theory of pre-$H$-fields.

But there are pre-$H$-fields satisfying different elementary conditions than $\T$, including $\d$-henselian ones (we use ``$\d$'' to abbreviate ``differential'' or ``differentially'', as appropriate), which are the focus of this paper.
This is a generalization of henselianity of a valued field to the class of valued differential fields with small derivation, a strong form of continuity of the derivation with respect to the valuation topology.
To describe a family of completions of the theory of pre-$H$-fields, we call a pre-$H$-field \emph{$\d$-Hensel-Liouville closed} if it is $\d$-henselian, real closed, and closed under exponential integration (i.e., for each $f$ there is $y \neq 0$ such that $y'/y = f$).
Given a complete theory $T$ of ordered differential fields, the theory of $\d$-Hensel-Liouville closed pre-$H$-fields whose ordered differential residue fields satisfy $T$ is complete.
Equivalently:
\begin{thmint}\label{int:ake}
Let $K_1$ and $K_2$ be $\d$-Hensel-Liouville closed pre-$H$-fields with ordered differential residue fields $\bm k_1$ and $\bm k_2$.
Then $K_1 \equiv K_2$ if and only if $\bm k_1 \equiv \bm k_2$.
\end{thmint}

More precisely, the pre-$H$-field language is $\{+, -, \cdot, 0, 1, \der, \les, \prece\}$, where, if $\ca O$ is the valuation ring, $f \prece g$ means $f \in g\ca O$.
Each residue field is equipped with the ordering and derivation induced by those of its pre-$H$-field and construed as a structure in the language $\{+, -, \cdot, 0, 1, \der, \les\}$.

Theorem~\ref{int:ake} is a result in the spirit of Ax--Kochen \cite{ak3} and Ershov \cite{ershov}, who showed that the theory of a henselian valued field of equicharacteristic~$0$ is determined by the theory of its residue field and (ordered) value group.
The value groups of $K_1$ and $K_2$ do not appear in Theorem~\ref{int:ake} because under its assumptions they are are divisible ordered abelian groups, and hence elementarily equivalent.
Moreover, they are even elementarily equivalent as asymptotic couples, as defined by Rosenlicht \cite{rosen-dval}, which is to say after being expanded by the map induced by logarithmic differentiation (see \S\ref{sec:prelim}).
In addition to the axioms for divisible ordered abelian groups, the complete axiomatization is given by four universal axioms of asymptotic couples together with the statement that the logarithmic derivative map is surjective onto the set of negative elements of the group; this is Corollary~\ref{ac:complete} (where these structures are called ``gap-closed $H$-asymptotic couples'').

Analogous AKE results for (unordered) valued differential fields have been established earlier for $\d$-henselian monotone fields: first for those with many constants by Scanlon \cite{scanlon} and then in the general monotone case by Hakobyan \cite{hakobyan}.
The interaction between the valuation and the derivation in a pre-$H$-field is opposite to the condition of monotonicity, and, indeed, $\d$-henselian pre-$H$-fields are never monotone.

To which pre-$H$-fields does Theorem~\ref{int:ake} apply?
Certainly not $\T$, which is not $\d$-henselian.\footnote{Instead, $\T$ satisfies a strong eventual form of $\d$-henselianity called newtonianity; see \cite[Chapter~14]{adamtt}.}
To construct an example, start with an $\aleph_0$-saturated elementary extension $\T^*$ of $\T$.
Although every element of $\T$ is exponentially bounded in the sense that it is bounded in absolute value by some finite iterate of the exponential, $\T^*$ contains transexponential (i.e., not exponentially bounded) elements.
Enlarge the valuation ring $\ca O_{\T^*}$ of $\T^*$ to the set $\dot{\ca O}_{\T^*}$ of exponentially bounded elements of $\T^*$.
With this coarsened valuation, $(\T^*, \dot{\ca O}_{\T^*})$ is a $\d$-Hensel-Liouville closed pre-$H$-field whose valuation only distinguishes transexponentially different elements, and
its ordered differential residue field $\res(\T^*, \dot{\ca O}_{\T^*})$ is a model of the theory of $\T$.
This coarsening decomposes $\T^*$ into a transexponential part, $(\T^*, \dot{\ca O}_{\T^*})$, whose model theory is the subject of this paper, and an exponentially bounded part, $\res(\T^*, \dot{\ca O}_{\T^*})$, whose model theory we understand by \cite{adamtt}.
The same coarsening procedure applied to a maximal Hardy field, which contains transexponential elements \cite{bosher-transexpHf}, yields a similar example.

Theorem~\ref{int:ake} follows easily from Theorem~\ref{eqthm}, which establishes a back-and-forth system in a two-sorted setting with a sort for a pre-$H$-field and a sort for its ordered differential residue field, connected by a binary residue map.
This back-and-forth system allows us to eliminate quantifiers from the pre-$H$-field sort in Theorem~\ref{relQE}, which is used to show the following two results.
(In fact, we get more precise two-sorted versions.)
% The stronger two-sorted versions are Corollary~\ref{stabembed} and Proposition~\ref{NIPtransfer}, respectively.
\begin{thmint}\label{int:stabembednip}
Let $K$ be a $\d$-Hensel-Liouville closed pre-$H$-field with ordered differential residue field~$\bm k$.
\begin{enumerate}
    \item Every subset of $\bm k^n$ definable in $K$, with parameters, is definable in the ordered differential field $\bm k$ with parameters from~$\bm k$.
    \item If the ordered differential field $\bm k$ has NIP, then $K$ has NIP.
\end{enumerate}
\end{thmint}

Another of the central results of \cite{adamtt} is that the class of existentially closed pre-$H$-fields is elementary and axiomatized by a theory $T^{\nl}$; note that $T^{\nl}_{\sm} = T^{\nl} + \text{``small derivation''}$ axiomatizes the theory of $\T$.
Equivalently, $T^{\nl}$ is the model companion of the theory of pre-$H$-fields, that is, $T^{\nl}$ is model complete and every pre-$H$-field extends to a model of $T^{\nl}$.
Also, $T^{\nl}_{\sm}$ is the model companion of the theory of $H$-fields with small derivation, where an \emph{$H$-field} is a pre-$H$-field whose valuation ring is the convex hull of its constant field.
But a pre-$H$-field with small derivation extends to a model of $T^{\nl}_{\sm}$ if and only if the derivation induced on its residue field is trivial.
This raises the question of whether the theory of pre-$H$-fields with nontrivial induced derivation on their residue field, which includes all $\d$-henselian pre-$H$-fields, has a model companion.

We give a positive answer for the broader class of pre-$H$-fields with gap~$0$.
In defining this, we use the notation $f \asymp g$ when $f \prece g$ and $g \prece f$, and $f \prec g$ when $f \prece g$ and $f \not\asymp g$.
A pre-$H$-field has \emph{small derivation} if $f' \prec 1$ for all $f \prec 1$ and \emph{gap~$0$} if it has small derivation and every $f \succ 1$ satisfies $f' \succ f$.
If a pre-$H$-field with gap~$0$ is closed under exponential integration (or even if its ordered differential residue field is), then any $f \succ 1$ is ``transexponential'' in the sense that $f \succ e_n$ for each $n$, where $e_n$ is an $n$-th iterated exponential integral of $1$.
To describe the axiomatization of the model completion, we recall that the theory of ordered differential fields, with no assumption on the interaction between the ordering and the derivation, has a model completion, the theory of closed ordered differential fields.
This theory also has quantifier elimination and is complete \cite{singer-codf}.
Then:
\begin{thmint}\label{int:modcomp}
The theory $T^{\dhl}_{\codf}$ 
of $\d$-Hensel-Liouville closed pre-$H$-fields with closed ordered differential residue field
is the model completion of the theory of pre-$H$-fields with gap~$0$.
\end{thmint}

In fact, we prove a more general two-sorted version where other theories of the residue field are permitted, as in the previous two results.
By Theorem~\ref{int:ake}, $T^{\dhl}_{\codf}$ is complete.
This theory has quantifier elimination, so it is distal by \cite{acgz-distal} and locally o-minimal.
The one-sorted results are collected in Theorem~\ref{preH:qe}.

The assumptions of ``real closed'' and ``exponential integration'' are necessary in Theorem~\ref{int:modcomp}, but what of their appearance in Theorem~\ref{int:ake}?
Even in the monotone setting, it is not true that the theory of a $\d$-henselian field is determined by the theory of its differential residue field and its value group.
Hakobyan provides an example of two $\d$-henselian monotone fields that are not elementarily equivalent but have isomorphic value groups and differential residue fields \cite[Example after Corollary~4.2]{hakobyan}.
To remedy this, in his monotone AKE theorem there is an additive map from the value group to the residue field.
Hence any AKE theorem for $\d$-henselian fields requires either additional assumptions or extra structure on the value group or differential residue field.
While it may be possible to remove the assumption that $K_1$ and $K_2$ are real closed in Theorem~\ref{int:ake} at the expense of incorporating their asymptotic couples, the next result shows that closure under exponential integration is necessary.
\begin{thmint}\label{int:ex}
There exist $\d$-henselian, real closed pre-$H$-fields $K_1$ and $K_2$ with ordered differential residue fields $\bm k_1$ and $\bm k_2$ such that $\bm k_1 \cong \bm k_2$ but $K_1 \not\equiv K_2$.
\end{thmint}
More precisely, one of $K_1$ and $K_2$ is closed under exponential integration but the other is not.
Also, their asymptotic couples are elementarily equivalent, so any three-sorted improvement of Theorem~\ref{int:ake} would still require some assumption on the behaviour of logarithmic derivatives.

\subsection{Structure of the paper}
Section~\ref{sec:prelim} contains basic definitions, notation, and remarks, which we keep close to \cite{adamtt}.
With the goal of back-and-forth arguments in mind, in subsequent sections we consider extensions controlled by the residue field, extensions controlled by the asymptotic couple, and those that involve adjoining exponential integrals.

In \S\ref{sec:preH:res}, we show how to extend embeddings of ordered valued differential fields by first extending the residue field.

We study extensions controlled by the asymptotic couple in \S\ref{sec:extac}, starting by studying asymptotic couples as structures in their own right in \S\ref{sec:ac-small}.
We isolate the model completion of the theory of $H$-asymptotic couples with gap~$0$ and prove that this theory has quantifier elimination in Theorem~\ref{ac:qe}.
In the rest of the section, this result, or rather its consequence Corollary~\ref{ac:9.9.2}, is used to study extensions of $\d$-henselian pre-$H$-fields whose asymptotic couples are existentially closed.

The short \S\ref{sec:rcc} deals with extending the constant field and plays no role in the main results; it is used only to strengthen the statement of Theorem~\ref{rcexpintclosure} in the next section.

Section~\ref{sec:dhl} proves the most difficult embedding result of the paper, Theorem~\ref{dhlclosure}: the existence of $\d$-Hensel-Liouville closures, which are are extensions that are $\d$-henselian, real closed, and closed under exponential integration, and that satisfy a semi-universal property.
Uniqueness of $\d$-Hensel-Liouville closures is Corollary~\ref{dhlclosuremin}.

Turning to the main results, in \S\ref{sec:main}, first some embedding results from previous sections are combined in the key embedding lemma.
Next we establish the two-sorted back-and-forth system, in Theorem~\ref{eqthm}.
The AKE theorem, Theorem~\ref{int:ake} (Corollary~\ref{ake}), follows immediately.
The relative quantifier elimination is Theorem~\ref{relQE}, from which the stable embeddedness of the residue field (the first part of Theorem~\ref{int:stabembednip}) follows immediately.
The two-sorted model companion result is Corollary~\ref{multimodcompa}, and the NIP transfer principle (the second part of Theorem~\ref{int:stabembednip}) is Proposition~\ref{NIPtransfer}.
One-sorted results for $T^{\dhl}_{\codf}$, including Theorem~\ref{int:modcomp} but also quantifier elimination, distality, and local o-minimality, are collected in Theorem~\ref{preH:qe}.

Section~\ref{sec:examples} documents two examples of $\d$-henselian pre-$H$-fields.
The first, in \S\ref{sec:transexppreH}, provides a fuller account of the example described in the introduction of a $\d$-Hensel-Liouville closed pre-$H$-field whose residue field is a model of $T^{\nl}_{\sm}$, which arises from a transexponential extension of $\T$.
This example motivates the earlier two-sorted results, in which the ordered differential residue field could have additional structure.
The second, in \S\ref{sec:expintnecessary}, establishes Theorem~\ref{int:ex} (Corollary~\ref{ex:notelem}), showing that the assumption of closure under exponential integration in the AKE theorem cannot be dropped.
As part of the proof, we show that for every ordered differential field $\bm k$ satisfying three obviously necessary conditions, there is a $\d$-Hensel-Liouville closed pre-$H$-field with ordered differential residue field isomorphic to~$\bm k$.

\section{Preliminaries}\label{sec:prelim}
We let $m$, $n$, and $r$ range over $\N = \{0, 1, 2, \dots\}$ and $\rho$, $\lambda$, and $\mu$ be ordinals.
The main objects of this paper are kinds of ordered valued differential fields; all fields in this paper are assumed to be of characteristic $0$.
A \deft{valued field} is a field $K$ equipped with a surjective map $v \colon K^{\x} \to \Gamma$, where $\Gamma$ is a (totally) ordered abelian group, satisfying for $f, g \in K^{\x}$:
\begin{enumerate}[label=(V\arabic*)]
	\item $v(fg) = v(f)+v(g)$;
	\item\label{v2} $v(f+g) \ges \min\{v(f), v(g)\}$ whenever $f+g \neq 0$.
\end{enumerate}
We also impose the condition that $v(\Q^{\x})=\{0\}$, i.e., that $K$ has equicharacteristic~$0$.
A \deft{differential field} is a field $K$ equipped with a \deft{derivation} $\der \colon K \to K$, which satisfies for $f, g \in K$:
\begin{enumerate}[label=(D\arabic*)]
	\item $\der(f+g)=\der(f)+\der(g)$;
	\item $\der(fg)=f\der(g)+g\der(f)$.
\end{enumerate}

Let $K$ be a valued field.
We add a new symbol $\infty$ to $\Gamma$ and extend the addition and ordering to $\Gamma_\infty \coloneqq \Gamma \cup \{\infty\}$ by $\infty+\gamma=\gamma+\infty=\infty$ and $\infty>\gamma$ for all $\gamma \in \Gamma$.
This allows us to extend $v$ to $K$ by setting $v(0) \coloneqq \infty$.
We often use the following more intuitive notation:
\[\begin{array}{lc}
f \prece g\ \Leftrightarrow\ v(f)\ges v(g),\qquad f \prec g\ \Leftrightarrow\ v(f)>v(g),\\
f \asymp g\ \Leftrightarrow\ v(f)=v(g),\qquad  
  f\sim g\ \Leftrightarrow\ f-g \prec g.
\end{array}\]
The relation $\prece$ is called a \deft{dominance relation}.
Both $\asymp$ and $\sim$ are equivalence relations on $K$ and $K^{\x}$ respectively, with a consequence of \ref{v2} being that if $f \sim g$, then $f \asymp g$.
We set $\ca O \coloneqq \{ f \in K : f \prece 1\}$ and call it the \deft{valuation ring} of $K$.
It has a (unique) maximal ideal $\cao \coloneqq \{ f \in K : f \prec 1\}$, and we call $\res(K) \coloneqq \ca O/\cao$ the \deft{residue field} of $K$, usually denoted by $\bm k$.
We also let $\overline{a}$ or $\res(a)$ denote the image of $a \in \ca O$ under the natural map to $\bm k$.
For another valued field $L$, we denote these objects by $\ca O_L$, $\Gamma_L$, $\bm k_L$, etc.

Let $K$ be a differential field.
For $f \in K$, we often write $f'$ for $\der(f)$ if the derivation is clear from context and set $f^\dagger \coloneqq f'/f$ if $f \neq 0$, the logarithmic derivative of $f$.
We say that $K$ has \deft{exponential integration} if $(K^{\x})^\dagger = K$.
The \deft{field of constants} of $K$ is $C \coloneqq \{ f \in K : f'=0 \}$.
For another differential field $L$, we denote this object by $C_L$.
We let $K\{Y\} \coloneqq K[Y, Y', Y'', \dots]$ be the ring of differential polynomials over $K$ and set $K\{Y\}^{\neq} \coloneqq K\{Y\} \setminus \{0\}$.
For $P \in K\{Y\}^{\neq}$, the \deft{order} of $P$ is the smallest $r$ such that $P \in K[Y, Y', \dots, Y^{(r)}]$.
We extend the derivation of $K$ to $K\{Y\}$ in the natural way.
If $L$ is a differential field extension of $K$ and $a \in L$, then $K \langle a \rangle$ denotes the differential subfield of $L$ generated by $a$ over $K$.
If $K$ is additionally a valued field, we extend $v$ to $K\{Y\}$ by setting $v(P)$ to be the minimum valuation of the coefficients of $P$ and thus also extend the relations $\prece$, $\prec$, $\asymp$, and $\sim$ to $K\{Y\}$. 

\begin{ass}
Suppose for the rest of the paper that $K$ is (at least) a valued differential field, unless stated otherwise.
\end{ass}
Relating the valuation and the derivation, we impose throughout most of this paper the condition that $K$ has \deft{small derivation}, which means that $\der\cao \subseteq \cao$.
If $K$ has small derivation, $\der\ca O \subseteq \ca O$ \cite[Lemma~4.4.2]{adamtt}, so $\der$ induces a derivation on $\bm k$, %, and $\der$ is continuous with respect to the valuation topology on $K$ \cite[Lemma~4.4.6]{adamtt}.
and we always construe $\bm k$ as a differential field with this induced derivation.
In contrast to the main results of \cite{adamtt}, we are interested in the case that it is nontrivial.
We call $\bm k$ \deft{linearly surjective} if for all $a_0, \dots, a_r \in \bm k$ with $a_r \neq 0$, the equation $1 + a_0y + a_1y' + \dots + a_r y^{(r)} = 0$ has a solution in $\bm k$.
We call $K$ \deft{differential-henselian} (\deft{$\d$-henselian} for short) if $K$ has small derivation and:
\begin{enumerate}[label=(DH\arabic*)]
	\item $\bm k$ is linearly surjective;
	\item whenever $P \in \ca O\{Y\}$ of order $r$ satisfies \[P(0)\ \prec\ 1\qquad \text{and} \qquad \sum_{n=0}^r \frac{\partial P}{\partial Y^{(n)}}(0)Y^{(n)}\ \asymp\ 1,\] there is $y \prec 1$ in $K$ with $P(y) = 0$.
\end{enumerate}
Differential-henselianity, a generalization of henselianity to valued differential fields with small derivation, was introduced in \cite{scanlon} and studied more systematically in \cite{adamtt}.

Here is another relation between the valuation and derivation fundamental to this setting.
We call $K$ \deft{asymptotic} if $f \prec g \iff f' \prec g'$ for all nonzero $f, g \in \cao$.
In the rest of this paragraph, suppose that $K$ is asymptotic.
Note that $C \subseteq \ca O$.
Also, if $f, g \in K^{\x}$ satisfy $f \sim g \not\asymp 1$, then $f' \sim g'$ and $f^\dagger \sim g^\dagger$.
Similarly, for $g \in K^{\x}$ with $g \not\asymp 1$, $v(g^\dagger)$ and $v(g')$ depend only on $vg$ and not on $g$, so for $\gamma = vg$ we set $\gamma^\dagger \coloneqq v(g^\dagger)$ and $\gamma' \coloneqq v(g')$; note that $\gamma^\dagger=\gamma'-\gamma$.
For any ordered abelian group $G$, set $G^{\neq} \coloneqq G \setminus \{0\}$, $G^< \coloneqq \{g \in G : g<0\}$, and $G^> \coloneqq \{g \in G : g>0\}$.
Thus logarithmic differentiation induces a map
\begin{align*}
\psi \colon \Gamma^{\neq} &\to \Gamma\\
\gamma &\mapsto \gamma^\dagger.
\end{align*}
We call $(\Gamma, \psi)$ the \deft{asymptotic couple} of $K$; such structures were introduced by Rosenlicht \cite{rosen-dval}, and more about them can be found in \cite[\S6.5 and \S9.2]{adamtt}.
We set $\Psi \coloneqq \psi(\Gamma^{\neq})$, and if we need to indicate the dependence on $K$, we denote this by $\Psi_K$.
When convenient, we extend $\psi$ to a map $\psi \colon \Gamma_{\infty} \to \Gamma_{\infty}$ by setting $\psi(0)=\psi(\infty) \coloneqq \infty$.

Here are two additional properties an asymptotic $K$ can have.
We say that $K$ is \deft{$H$-asymptotic} or of \deft{$H$-type} if, for all $f, g \in K^{\x}$ satisfying $f \prece g \prec 1$, we have $f^\dagger \succe g^\dagger$.
We say that $K$ has \deft{gap~$0$} if it has small derivation and $f^\dagger \succ 1$ for all $f \in K^{\x}$ with $f \prec 1$.
Properties of $K$ are reflected in its asymptotic couple, but conversely some properties of $K$ are determined by $(\Gamma, \psi)$.
For instance, $K$ has small derivation if and only if $(\Gamma^>)' \subseteq \Gamma^>$, and $K$ has gap~$0$ if and only if $(\Gamma^>)' \subseteq \Gamma^>$ and $\Psi \subseteq \Gamma^<$.
There can be at most one $\beta \in \Gamma$ with $\Psi < \beta < (\Gamma^>)'$ \cite[Corollary~9.2.4]{adamtt}, so $K$ having gap~$0$ means that $\beta=0$ separates $\Psi$ and $(\Gamma^>)'$ in this way.
Similarly, $K$ is of $H$-type just in case $\psi(\alpha)\les\psi(\beta)$ whenever $\alpha\les\beta<0$ in $\Gamma$.
For more on asymptotic fields, see \cite[Chapter~9]{adamtt}.

By the above, all asymptotic fields with gap~$0$ are clearly pre-$\d$-valued, where we say that $K$ is \deft{pre-differential-valued} (\deft{pre-$\d$-valued} for short) if:
\begin{enumerate}[label=(PDV)]
    \item for all $f, g \in K^{\x}$ with $f \prece 1$ and $g \prec 1$, we have $f' \prec g^\dagger$.
\end{enumerate}
More on pre-$\d$-valued fields can be found in \cite[\S10.1]{adamtt}, including a characterization of them as those $K$ that satisfy $C \subseteq \ca O$ and a valuation-theoretic analogue of l'H\^{o}pital's Rule \cite[Lemma~10.1.4]{adamtt}.
In the rest of this paragraph, let $K$ be pre-$\d$-valued.
If $K$ has small derivation and the derivation induced on $\bm k$ is nontrivial (for instance, if $K$ is $\d$-henselian), then $K$ has gap~$0$. %: take $u \in K$ with $u \asymp u' \asymp 1$, so then $b^\dagger \succ u' \asymp 1$ for all $b \in K^{\x}$ with $b \prec 1$.
Having gap~$0$ is inherited by valued differential subfields, and thus if $K$ has a $\d$-henselian valued differential field extension, then $K$ has gap~$0$.
These remarks apply in particular to pre-$H$-fields with gap~$0$, which we now define.

This paper is primarily concerned with certain ordered pre-$\d$-valued fields called pre-$H$-fields.
Here, $K$ is an \deft{ordered valued differential field} if, in addition to its valuation and derivation, $K$ is equipped with a (total) ordering $\les$ making it an ordered field.
Relating the ordering, valuation, and derivation, we call $K$ a \deft{pre-$H$-field} if:
\begin{enumerate}[label=(PH\arabic*)]
    \item\label{ph1} $K$ is pre-$\d$-valued;
    \item\label{ph2} $\ca O$ is convex (with respect to $\les$);
    \item\label{ph3} for all $f \in K$, if $f > \ca O$, then $f'>0$.
\end{enumerate}
Condition \ref{ph2} holds if and only if $\cao$ is convex, which holds if and only if $\cao \subseteq (-1,1)$. %(see for example \cite[Lemma~3.5.11]{adamtt}).
Hence if \ref{ph2} holds, then $\les$ induces an ordering on $\bm k$ making it an ordered field.
We thus construe the residue fields of pre-$H$-fields with small derivation as ordered differential fields.
By \cite[Lemma~10.5.2(ii)]{adamtt}, pre-$H$-fields are of $H$-type.

We now discuss extensions of valued differential fields and ordered valued differential fields.
Given an extension $L$ of $K$, we identify $\Gamma$ with an ordered subgroup of $\Gamma_L$ and $\bm k$ with a subfield of $\bm k_L$ in the obvious way.
Here and in general we use the word \deft{extension} as follows: if $F$ is a valued differential field, ``extension of $F$'' means ``valued differential field extension of $F$''; if $F$ is an ordered valued differential field, ``extension of $F$'' means ``ordered valued differential field extension of $F$''; etc.
``Embedding'', ``isomorphic'', and ``isomorphism'' are used similarly.
Where there is particular danger of confusion, we are explicit.

An important class of extensions are the immediate extensions:
We say that an extension $L$ of $K$ is \deft{immediate} if $\Gamma_L = \Gamma$ and $\bm k_L = \bm k$; equivalently, for every $b \in L^{\x}$ there is $a \in K^{\x}$ such that $b \sim a$.
If $K$ is a pre-$H$-field and $L$ is an immediate valued differential field extension of $K$ that is asymptotic, then $L$ can be given an ordering making it a pre-$H$-field extension of $K$; in fact, this is the unique ordering with respect to which $\ca O_L$ is convex, and hence any valued differential field embedding of $L$ into a pre-$H$-field extension $M$ of $K$ is automatically an ordered valued differential field embedding \cite[Lemma~10.5.8]{adamtt}.

\section{Extensions controlled by the residue field}\label{sec:preH:res}
In this section we prove Lemma~\ref{lem:ordresfldext}, which yields ordered variants of results from \cite[\S6.3 and \S7.1]{adamtt} for use in \S\ref{sec:main}.
In the first lemma, $K$ need not be equipped with a derivation.
\begin{lem}\label{lem:ordresfldext}
Suppose that $K$ is an ordered valued field with convex valuation ring.
Let $L$ be a valued field extension of $K$ with $\Gamma_L=\Gamma$ and suppose that $\bm k_L$ is an \emph{ordered} field extension of $\bm k$.
Then there exists a unique ordering on $L$ making it an ordered field extension of $K$ with convex valuation ring such that the induced ordering on $\bm k_L$ agrees with the given one.

If $K$ is a pre-$H$-field with gap~$0$ and $L$ is moreover a differential field extension of $K$ with small derivation, then $L$ is also a pre-$H$-field with gap~$0$.
\end{lem}
\begin{proof}
Suppose that $L$ is equipped with an ordering making it an ordered field extension of $K$ with convex valuation ring such that the induced ordering on $\bm k_L$ agrees with the given one.
Let $a \in L^{\x}$.
Then $a=su$ with $s \in K^>$ and $u \asymp 1$ in $L$.
Since $u>0 \iff \overline{u}>0$, there is at most one such ordering on $L$, and this also shows how to define the ordering on $L$: $a>0 \iff \overline{u}>0$.
This is independent of the choice of $s$ and $u$: If $a=tu_1$ with $t \in K^>$ and $u_1 \asymp 1$ in $L$, then $u_1 = st^{-1}u$ and $\overline{u_1} = \overline{st^{-1}}\cdot\overline{u}$, so $\overline{u_1}>0 \iff \overline{u}>0$.
Obviously, $a>0$ or $-a>0$.

Next, assume that $a, b \in L^{>}$; we will show that $a+b>0$ and $ab>0$.
Then $a=su_1$ and $b=tu_2$ with $s, t \in K^>$ and $u_1, u_2 \asymp 1$ in $L^>$.
Without loss of generality, $s \prece t$, so $a+b=t(st^{-1}u_1+u_2)$ and 
\[
\overline{st^{-1}u_1+u_2}\ =\ \overline{st^{-1}}\cdot\overline{u_1}+\overline{u_2}\ >\ 0.
\]
Thus $st^{-1}u_1+u_2 \asymp 1$ and $a+b>0$.
Also, $ab = stu_1u_2$ with $\overline{u_1u_2}=\overline{u_1}\cdot\overline{u_2}>0$, so $ab>0$. 
Similarly, $a^2>0$.
Thus we have defined an ordering on $L$ making it an ordered field extension of $K$.
Obviously, if $a \prec 1$, then $-1<a<1$, so the valuation ring of $L$ is convex with respect to this ordering, and by construction it induces the given ordering on~$\bm k_L$.

Finally, suppose that $K$ is a pre-$H$-field with gap~$0$.
Let $a,b \in L^{\x}$ with $a \prece 1$ and $b \not\asymp 1$.
Since $L$ has small derivation, we have $a' \prece 1$.
Write $b=su$ with $s \in K^>$ and $u \asymp 1$ in $L$, so then $b^{\dagger} = s^{\dagger}+u^{\dagger}\sim s^{\dagger} \succ 1$, since $K$ has gap~$0$.
Thus $a' \prec b^{\dagger}$, showing that $L$ is pre-$\d$-valued.
It has the same asymptotic couple as $K$, so still has gap~$0$.
To see that $L$ is a pre-$H$-field, it remains to check \ref{ph3}:
If $b>\ca O_L$, then likewise $b'\sim s'u > 0$.
\end{proof}

\begin{lem}\label{adh7.1.4o}
Suppose that $K$ is an ordered valued differential field with small derivation and convex valuation ring.
Let $\bm k\langle y \rangle$ be an \emph{ordered} differential field extension of $\bm k$ with $y$ $\d$-algebraic over $\bm k$.
Then there exists an ordered valued differential field extension $K\langle a \rangle$ of $K$ such that:
\begin{enumerate}
    \item $\Gamma_{K \langle a \rangle} = \Gamma$;
    \item $K \langle a \rangle$ has small derivation and convex valuation ring;
    \item $a\asymp1$ and $\res(K\langle a\rangle)=\bm k\langle \overline{a} \rangle \cong \bm k\langle y \rangle$ over $\bm k$ \textnormal{(}as ordered differential fields\textnormal{)};
    \item for any ordered valued differential field extension $M$ of $K$ with convex valuation ring that is $\d$-henselian, every embedding $\bm k\langle \overline{a}\rangle \to \bm k_M$ over $\bm k$ is induced by an embedding $K\langle a\rangle \to M$ over~$K$.
\end{enumerate}
Moreover, if $K$ is a pre-$H$-field with gap~$0$, then so is $K\langle a \rangle$.
\end{lem}
\begin{proof}
The existence of the valued differential field $K\langle a\rangle$ and its embedding property as a valued differential field are provided by \cite[Theorem~6.3.2 and Lemma~7.1.4]{adamtt}.
Equipping $K\langle a\rangle$ with the ordering from Lemma~\ref{lem:ordresfldext} gives the rest, with the embedding property as an \emph{ordered} valued differential field following from the uniqueness of the ordering on $K\langle a\rangle$.
\end{proof}

For the $\d$-transcendental analogue, which likewise follows from \cite[Lemma~6.3.1]{adamtt} and Lemma~\ref{lem:ordresfldext}, we construe the fraction field $K \langle Y \rangle$ of $K\{Y\}$ as a valued differential field extension of $K$ by extending $\der$ and the map $P \mapsto v(P)$ to $K \langle Y \rangle$ in the obvious way.
Then $\Gamma_{K \langle Y \rangle}=\Gamma$, and $K\langle Y\rangle$ has small derivation if $K$ does, in which case $\overline{Y}$ is $\d$-transcendental over $\bm k$ and $\res(K\langle Y\rangle) = \bm k\langle \overline{Y}\rangle$ (see \cite[\S6.3]{adamtt}).
\begin{lem}\label{adh6.3.1o}
Suppose that $K$ is an ordered valued differential field with small derivation and convex valuation ring.
Suppose that $\bm k\langle \overline{Y} \rangle$ is an \emph{ordered} differential field extension of $\bm k$ and equip $K \langle Y \rangle$ with the ordering from Lemma~\ref{lem:ordresfldext}.
Let $M$ be an ordered valued differential field extension of $K$ with small derivation, convex valuation ring, and $a \in M$ with $a \asymp 1$ and $\overline{a}$ $\d$-transcendental over $\bm k$.
Then there exists a unique embedding $K \langle Y \rangle \to M$ over $K$ with $Y \mapsto a$.
If $K$ is a pre-$H$-field with gap~$0$, then so is $K\langle Y \rangle$.
\end{lem}

Similarly, we have an ordered variant of \cite[Corollary~7.1.5]{adamtt}.
\begin{cor}\label{adh6.3.3o}
Suppose that $K$ is an ordered valued differential field with small derivation and convex valuation ring.
Let $\bm k_L$ be an ordered differential field extension of $\bm k$.
Then $K$ has an ordered valued differential field extension $L$ with the following properties:
\begin{enumerate}
    \item $\Gamma_L = \Gamma$;
    \item $L$ has small derivation and convex valuation ring;
    \item $\res(L) \cong \bm k_L$ over $\bm k$ \textnormal{(}as ordered differential fields\textnormal{)};
    \item for any ordered valued differential field extension $M$ of $K$ with convex valuation ring that is $\d$-henselian, every embedding $\res(L) \to \bm k_M$ over $\bm k$ is induced by an embedding $L \to M$ over~$K$.
\end{enumerate}
Moreover, if $K$ is a pre-$H$-field with gap~$0$, then so is $L$.
\end{cor}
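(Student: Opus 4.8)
The plan is to imitate the construction behind the unordered version \cite[Corollary~6.3.3]{adamtt}, feeding in Lemmas~\ref{adh6.3.1o} and \ref{adh6.3.2o} (together with Theorem~\ref{adh6.3.2}) in place of \cite[Lemma~6.3.1]{adamtt} and \cite[Theorem~6.3.2]{adamtt}, and carrying the orderings along at each stage. First I would present $\bm k_L$ as the union of a chain $\bm k = \bm k_0 \subseteq \bm k_1 \subseteq \cdots \subseteq \bm k_\rho = \bm k_L$ of ordered differential subfields in which $\bm k_{\lambda+1} = \bm k_\lambda\langle a_\lambda\rangle$ with $a_\lambda$ either $\d$-transcendental over $\bm k_\lambda$ or $\d$-algebraic over $\bm k_\lambda$ with irreducible minimal annihilator $\overline F_\lambda$, and $\bm k_\lambda = \bigcup_{\mu < \lambda}\bm k_\mu$ at limits, as in \cite[Corollary~6.3.3]{adamtt}. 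Then I would build a parallel chain $K = L_0 \subseteq L_1 \subseteq \cdots \subseteq L_\rho =: L$ of ordered valued differential field extensions of $K$, each with small derivation, convex valuation ring, value group $\Gamma$, and residue field $\bm k_\lambda$ as ordered differential field over $\bm k$: at a $\d$-transcendental successor step take $L_{\lambda+1} := L_\lambda\langle Y \rangle$ with the gaussian valuation and the ordering furnished by Lemma~\ref{adh6.3.1o} after transporting the ordering of $\bm k_{\lambda+1}$ to $\bm k_\lambda\langle \overline Y \rangle$ via $\overline Y \mapsto a_\lambda$; at a $\d$-algebraic successor step fix an irreducible lift $F_\lambda \in \ca O_{L_\lambda}\{Y\}$ of $\overline F_\lambda$ of the same degree in its highest-order variable (so that $F_\lambda \asymp 1$ with initial $\asymp 1$), form the differential field $L_\lambda\langle a \rangle$ in which $a$ has minimal annihilator $F_\lambda$, and give it the valuation of Theorem~\ref{adh6.3.2} and the ordering of Lemma~\ref{adh6.3.2o} after transporting the ordering of $\bm k_{\lambda+1} \cong \bm k_\lambda\langle \overline a \rangle$; at limit steps take unions. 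With $L := L_\rho$, properties (i)--(iv) are then immediate, and since being a pre-$H$-field with gap~$0$ is preserved both under directed unions and, by the ``moreover'' clauses of Lemmas~\ref{adh6.3.1o} and \ref{adh6.3.2o}, under each successor step, the final clause follows as well.

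For the semi-universal property (v), let $M$ be an ordered valued differential field extension of $K$ with convex valuation ring that is $\d$-henselian, and let $j \colon \res L \to \bm k_M$ be an embedding over $\bm k$. I would recursively define embeddings $i_\lambda \colon L_\lambda \to M$ over $K$ inducing $j|_{\res L_\lambda}$, taking $i_0$ to be the inclusion and $i_\lambda = \bigcup_{\mu < \lambda} i_\mu$ at limits (these form a chain by construction), and handling successors as follows. In the $\d$-transcendental case, any lift $b \in \ca O_M$ of $j(\overline Y)$ works: since $j(\overline Y)$ is $\d$-transcendental over $\res i_\lambda(L_\lambda) = j(\bm k_\lambda)$, the valuation that $M$ induces on $i_\lambda(L_\lambda)\langle b\rangle \cong L_\lambda\langle Y \rangle$ is gaussian, so $Y \mapsto b$ extends $i_\lambda$ to a valued differential embedding, which by the uniqueness of the ordering in Lemma~\ref{adh6.3.1o} is order-preserving and induces $j|_{\res L_{\lambda+1}}$. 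In the $\d$-algebraic case, set $\beta := j(\overline a)$, fix a lift $\hat\beta \in \ca O_M$, and apply (DH2) to $P := i_\lambda(F_\lambda)(\hat\beta + Y) \in \ca O_M\{Y\}$: here $P_0 \prec 1$ because $\beta$ is a zero of $\overline{i_\lambda(F_\lambda)}$, and $P_1 \asymp 1$ because the partial derivative of $\overline{i_\lambda(F_\lambda)}$ with respect to its highest-order variable does not vanish at $\beta$, by minimality of $\overline F_\lambda$. This yields $b \in \ca O_M$ with $\res b = \beta$ and $i_\lambda(F_\lambda)(b) = 0$; as the initial of $i_\lambda(F_\lambda)$ does not vanish at $b$ (again by minimality, since $\res b = \beta$), there is a differential field embedding $L_{\lambda+1} = L_\lambda\langle a\rangle \to M$ over $i_\lambda$ with $a \mapsto b$. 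Its pulled-back valuation satisfies the conditions characterizing the valuation in Theorem~\ref{adh6.3.2} and its pulled-back ordering those of Lemma~\ref{adh6.3.2o}, so by the respective uniqueness statements it is an embedding of ordered valued differential fields; by construction it induces $j|_{\res L_{\lambda+1}}$. Then $i := i_\rho \colon L \to M$ witnesses (v).

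The main obstacle is the $\d$-algebraic successor step in (v): producing the zero $b$ of $i_\lambda(F_\lambda)$ lying over the prescribed residue $\beta$. This is exactly where the $\d$-henselianity of $M$ is used, and verifying the hypotheses of (DH2) --- that $P_0 \prec 1$ and $P_1 \asymp 1$ --- rests on the minimality of the annihilator $\overline F_\lambda$. Everything else --- matching the transported valuations and orderings through the uniqueness clauses of Theorem~\ref{adh6.3.2} and Lemmas~\ref{adh6.3.1o}, \ref{adh6.3.2o}, and passing through limits --- is routine.
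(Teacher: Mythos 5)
Your proposal is correct and follows essentially the same route as the paper's: the paper reduces to the case $\bm k_L = \bm k\langle y \rangle$, splits into the $\d$-transcendental and $\d$-algebraic subcases exactly as you do, constructs $L$ via Lemmas~\ref{adh6.3.1o} and \ref{adh6.3.2o} (with Theorem~\ref{adh6.3.2}), and for~(v) uses $\d$-henselianity of $M$ through (DH2) after observing that the nonvanishing of the separant of $\overline F$ at $i(\overline a)$ yields $(F_{+z})_1 \asymp 1$. The only cosmetic difference is that you make the transfinite chain through $\bm k_L$ explicit, whereas the paper simply notes at the outset that one may reduce to a singly generated residue extension.
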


\section{Extensions controlled by the asymptotic couple}\label{sec:extac}
Towards our quantifier elimination and model completion results for pre-$H$-fields with gap~$0$, in this section we study extensions controlled by the asymptotic couple.
We begin by studying $H$-asymptotic couples with gap~$0$ in their own right and finding the model completion of this theory, which has quantifier elimination.

\subsection{Asymptotic couples with small derivation}\label{sec:ac-small}
The material in this subsection is based on \cite{adh-revisitingclosedac}, which, apart from its new results, revisits quantifier elimination for the theory of closed $H$-asymptotic couples from \cite{ad-ac}, introducing several new lemmas that simplify the arguments.
For convenience, we use \cite[\S6.5 and \S9.2]{adamtt} as a reference instead of the original sources \cite{rosen-dval,rosen-dvalgp1,rosen-dvalgp2,ad-ac,ad-hf} for asymptotic couples.

In contrast with the results of \cite{ad-ac,adh-revisitingclosedac}, here we do not need to expand the language by a predicate for the $\Psi$-set or by functions for divisibility by nonzero natural numbers.
Additionally, those authors work over an arbitrary ordered scalar field $\bm k$, but here we work over $\bb Q$ for concreteness (the results of this section hold in that setting in the language $\ca L_{\ac}$, described later, expanded by functions for scalar multiplication).

\subsubsection{Preliminaries}
We suspend in this subsection the convention that $\Gamma$ is the value group of $K$.
Instead, $(\Gamma, \psi)$ is an \deft{$H$-asymptotic couple}, which means that $\Gamma$ is an ordered abelian group and $\psi \colon \Gamma^{\neq} \to \Gamma$ is a map satisfying, for all $\gamma, \delta \in \Gamma^{\neq}$:
\begin{enumerate}[label=(AC\arabic*)]
    \item\label{ac1} if $\gamma+\delta \neq 0$, then $\psi(\gamma+\delta) \ges \min\{\psi(\gamma),\psi(\delta)\}$;
    \item\label{ac2} $\psi(k\gamma) = \psi(\gamma)$ for all $k \in \Z^{\neq}$;
    \item\label{ac3} if $\gamma>0$, then $\gamma+\psi(\gamma)>\psi(\delta)$;
    \myitem[(HC)]\manuallabel{hc}{(HC)} if $0<\gamma \les \delta$, then $\psi(\gamma) \ges \psi(\delta)$.
\end{enumerate}
Keeping in mind that later $(\Gamma, \psi)$ will be the asymptotic couple of an $H$-asymptotic field (such as a pre-$H$-field), we let $\gamma^\dagger \coloneqq \psi(\gamma)$ and $\gamma' \coloneqq \gamma^\dagger + \gamma$ for $\gamma \in \Gamma^{\neq}$.
The map $\gamma \mapsto \gamma'$ for $\gamma \in \Gamma^{\neq}$ is strictly increasing \cite[Lemma~6.5.4(iii)]{adamtt}.
We let $\Psi \coloneqq \psi(\Gamma^{\neq})$ and for any ordered abelian group $G$ we set $G^{\les} \coloneqq \{ g \in G : g \les 0 \}$, having earlier defined $G^>$ and $G^<$.
Thus \ref{ac3} says that $\Psi<(\Gamma^>)'$.

A fundamental result is that $\Gamma \setminus (\Gamma^{\neq})'$ has at most one element \cite[Theorem~9.2.1]{adamtt}, and moreover exactly one of the following holds \cite[Corollary~9.2.16]{adamtt}:
\begin{enumerate}
    \item there exists $\beta \in \Gamma$ such that $\Psi<\beta<(\Gamma^>)'$, in which case $(\Gamma, \psi)$ has \deft{gap~$\beta$};
    \item there exists $\beta \in \Gamma$ such that $\max\Psi=\beta$, in which case $(\Gamma, \psi)$ has \deft{max~$\beta$};
    \item $(\Gamma^{\neq})'=\Gamma$, in which case $(\Gamma, \psi)$ has \deft{asymptotic integration}.
\end{enumerate}
We are primarily concerned with $H$-asymptotic couples having gap~$0$, although we also consider the case of max~$0$ in this subsection.
The material on $H$-asymptotic couples with max~$0$ is only used in one later theorem that itself is not used in the main results, but fits naturally with that of the gap~$0$ case.
In contrast, the main results of \cite{ad-ac,adh-revisitingclosedac} concern asymptotic couples with asymptotic integration, such as the asymptotic couple of $\T$.
In adapting their arguments, we try to highlight how the substitution of ``gap~$0$'' or ``max~$0$'' for ``asymptotic integration'' (or similar changes) alters the proofs.

Note that if $(\Gamma, \psi)$ is the asymptotic couple of an asymptotic field $K$, then $K$ has gap~$0$ (in the sense of \S\ref{sec:prelim}) if and only if $(\Gamma, \psi)$ has gap~$0$.
It follows from \cite[Theorem~9.2.1 and Corollary~9.2.4]{adamtt} that $\sup\Psi=0 \iff (\Gamma^>)' = \Gamma^>$ and that $\sup\Psi=0 \notin \Psi$ if and only if $(\Gamma, \psi)$ has gap~$0$.
Thus $\sup\Psi=0$ if and only if $(\Gamma, \psi)$ has gap~$0$ or max~$0$.

It follows from \ref{ac2} and \ref{hc} that $\psi$ is constant on archimedean classes of $\Gamma$.
For $\gamma \in \Gamma$, we let $[\gamma] \coloneqq \{ \delta \in \Gamma : |\delta| \les n|\gamma|\ \text{and}\ |\gamma| \les n|\delta|\ \text{for some}\ n\}$ denote its archimedean class, and set $[\Gamma] \coloneqq \{ [\gamma] : \gamma \in \Gamma \}$, ordering it by $[\delta]<[\gamma]$ if $n|\delta|<|\gamma|$ for all $n$, where $\gamma, \delta \in \Gamma$.
The map $\psi$ extends uniquely to the divisible hull $\Q\Gamma$ of $\Gamma$, defined by $\psi(q\gamma)=\psi(\gamma)$ for $\gamma \in \Gamma^{\neq}$ and $q \in \Q^{\x}$ (use \cite[Lemma~6.5.3]{adamtt} to get \ref{ac3}), and in this way we always construe $\Q\Gamma$ as an $H$-asymptotic couple $(\Q\Gamma, \psi)$ extending $(\Gamma, \psi)$.
It satisfies $\psi((\Q\Gamma)^{\neq}) = \psi(\Gamma^{\neq})$, so if $(\Gamma, \psi)$ has gap~$0$ (respectively, max~$0$), then so does $(\Q\Gamma, \psi)$.
We also use that if $(\Gamma, \psi)$ has gap~$0$, then for every $\gamma \in \Gamma^{\neq}$, we have $[\gamma]>[\gamma^\dagger]$ by \cite[Lemma~9.2.10(iv)]{adamtt} and so $[\gamma]=[\gamma']$; in particular, if $\gamma<0$, then $\gamma<\gamma^\dagger<0$.

We call $(\Gamma, \psi)$ \deft{gap-closed} if $\Gamma$ is nontrivial and divisible, and $\Psi=\Gamma^<$.
The goal of this section is Theorem~\ref{ac:qe}, which states that the theory of gap-closed $H$-asymptotic couples has quantifier elimination and is the model completion of the theory of $H$-asymptotic couples with gap~$0$.
Likewise, the theory of max-closed $H$-asymptotic couples, where $(\Gamma, \psi)$ is \deft{max-closed} if $\Gamma$ is divisible and $\Psi=\Gamma^{\les}$, has quantifier elimination and is the model completion of the theory of $H$-asymptotic couples $(\Gamma, \psi)$ with $\sup\Psi=0$.
The language for these results is the language $\ca L_{\ac}=\{+, -, \les, 0, \infty, \psi\}$ of asymptotic couples.
The underlying set of $(\Gamma, \psi)$ in this language is $\Gamma_{\infty} \coloneqq \Gamma \cup \{\infty\}$, and we interpret $\infty$ in the following way: for all $\gamma \in \Gamma$, $\infty+\gamma=\gamma+\infty \coloneqq \infty$ and $\gamma<\infty$; $\infty+\infty \coloneqq \infty$; $-\infty \coloneqq \infty$; $\psi(0)=\psi(\infty) \coloneqq \infty$.
The other symbols have the expected interpretations.

One of the new extension lemmas of \cite{adh-revisitingclosedac} is \cite[Lemma~2.7]{adh-revisitingclosedac}.
Here is a variant that avoids assuming asymptotic integration by making a stronger cofinality assumption.
\begin{lem}\label{bm:2.7}
Suppose that $\Psi$ is downward closed in $\Gamma$. Let $(\Gamma_1, \psi_1)$ and $(\Gamma_*, \psi_*)$ be $H$-asymptotic couples extending $(\Gamma, \psi)$ such that $\Gamma^<$ is cofinal in $\Gamma_1^<$.
Suppose that $\gamma_1 \in \Gamma_1 \setminus \Gamma$ and $\gamma_* \in \Gamma_* \setminus \Gamma$ realize the same cut in $\Gamma$ and $\gamma_1^\dagger \notin \Gamma$.
Then $\gamma_*^\dagger \notin \Gamma$ and $\gamma_*^\dagger$ realizes the same cut in $\Gamma$ as~$\gamma_1^\dagger$.
\end{lem}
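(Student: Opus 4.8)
The idea is to reduce to positive $\gamma_1,\gamma_*$, extract from $\gamma_1^\dagger\notin\Gamma$ that the cut of $\gamma_1$ in $\Gamma$ has a special ``$\psi$-closed'' shape, use that to pin down the cut of $\gamma_1^\dagger$ purely in terms of the cut of $\gamma_1$, and then transfer this to $\gamma_*$. First, it suffices to treat $\gamma_1,\gamma_*>0$: replacing $\gamma$ by $-\gamma$ changes neither $\psi(\gamma)=\psi(-\gamma)$ (by (AC2)) nor the cut in $\Gamma$ up to the evident reflection. Now $[\gamma_1]\notin[\Gamma]$, since if $[\gamma_1]=[\delta]$ with $\delta\in\Gamma^{\neq}$ then $\psi_1(\gamma_1)=\psi(\delta)\in\Gamma$, because $\psi_1$ is constant on archimedean classes; $[\gamma_1]^\dagger$ would then lie in $\Gamma$. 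Running the same mechanism inside $(\Gamma_1,\psi_1)$ via (HC), there are no $\delta,\delta'\in\Gamma^>$ with $\delta<\gamma_1<\delta'$ and $\psi(\delta)=\psi(\delta')$: otherwise $\psi(\delta')\les\psi_1(\gamma_1)\les\psi(\delta)$ forces $\psi_1(\gamma_1)=\psi(\delta)\in\Gamma$.

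Set $D^- \coloneqq \psi\bigl(\{\delta\in\Gamma^>:\delta<\gamma_1\}\bigr)$ and $D^+ \coloneqq \psi\bigl(\{\delta\in\Gamma^>:\delta>\gamma_1\}\bigr)$. The ``no-straddle'' property gives $\Psi=D^+\sqcup D^-$, with every element of $D^+$ strictly below every element of $D^-$ (by (HC)); since $\Psi$ is downward closed, $D^+$ is in fact an initial segment of $\Gamma$ and $\Gamma\setminus\Psi$ lies above all of $\Psi$, so $\Gamma=D^+\sqcup\bigl(D^-\cup(\Gamma\setminus\Psi)\bigr)$. Crucially, $D^\pm$ and the no-straddle property depend only on the cut realized by $\gamma_1$ in $\Gamma$. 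By (HC) for $\psi_1$ we have $\psi_1(\gamma_1)\les\psi(\delta)$ when $\delta\in\Gamma^>$ and $\delta<\gamma_1$, and $\psi_1(\gamma_1)\ges\psi(\delta')$ when $\delta'\in\Gamma^>$ and $\delta'>\gamma_1$; as $\gamma_1^\dagger\notin\Gamma$ both are strict, so $D^+<\gamma_1^\dagger<D^-$, and since $\gamma_1^\dagger$ also lies below $\Gamma\setminus\Psi$ this identifies the cut of $\gamma_1^\dagger$ in $\Gamma$ as $\bigl(D^+,\ \Gamma\setminus D^+\bigr)$ (when $\gamma_1>\Gamma$ the left part is empty; note $D^-\neq\emptyset$, since $\Gamma^<$ cofinal in $\Gamma_1^<$ makes $\Gamma^>$ coinitial in $\Gamma_1^>$). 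Since $\gamma_*$ realizes the same cut in $\Gamma$ as $\gamma_1$, the sets $D^\pm$ and the no-straddle property hold verbatim for $\gamma_*$; hence the moment we know $\gamma_*^\dagger\notin\Gamma$, the same computation applied to $\gamma_*$ shows $\gamma_*^\dagger$ realizes the cut $(D^+,\Gamma\setminus D^+)$ — exactly the cut of $\gamma_1^\dagger$.

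Thus everything reduces to showing $\gamma_*^\dagger\notin\Gamma$. Suppose toward a contradiction that $\gamma_*^\dagger=y\in\Gamma$. By (HC) for $\psi_*$ one has $D^+\les y$ and $y\les D^-$; if $y\notin\Psi$ then $y$ lies above $\Psi\supseteq D^-\neq\emptyset$, impossible, so $y\in\Psi=D^+\sqcup D^-$ and $y$ equals $\max D^+$ or $\min D^-$. In either case, decoding which $\delta\in\Gamma^>$ satisfy $\delta<\gamma_*$ shows that $\gamma_*$ realizes ``the cut immediately below (resp.\ immediately above)'' a particular archimedean class $e\in[\Gamma]$ with $\psi(e)=y$, and that this forces $[\gamma_*]=e$ with $\gamma_*$ sitting beyond $\Gamma^>\cap e$ inside that class. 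But then (AC3) for $\gamma_*$ demands $\gamma_*+y>\Psi$, and one shows this collides with $\gamma_*$ sitting beyond $\Gamma^>\cap e$: here $\Psi$ being downward closed controls how far into $e$ the set $\Gamma\cap e$ reaches, and combined with (AC3) for the elements of $\Gamma^>\cap e$ and with the cofinality hypothesis this yields the contradiction.

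I expect this last step — proving $\gamma_*^\dagger\notin\Gamma$ — to be the main obstacle; the rest is bookkeeping with (AC1)--(HC) and the linear order of $\Gamma$. A more conceptual alternative would be to isolate once and for all those cuts $c$ of $\Gamma$ for which \emph{every} realization of $c$ in \emph{every} $H$-asymptotic extension has $\psi$-value outside $\Gamma$, and to show that this class of cuts is already pinned down by the existence of one realization $\gamma_1$ with $\Gamma^<$ cofinal in the ambient negative cone; the lemma then follows immediately together with the cut computation above.
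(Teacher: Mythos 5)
Your bookkeeping in the first two paragraphs is correct and close in spirit to the paper, but cast differently: you explicitly identify the cut of $\gamma_1^\dagger$ as $(D^+,\Gamma\setminus D^+)$ with $D^\pm$ depending only on the cut of $\gamma_1$, and observe that once $\gamma_*^\dagger\notin\Gamma$ is known the same computation finishes. The paper instead proves directly, for each $\alpha\in\Gamma^{\neq}$, that $\gamma_1^\dagger<\alpha^\dagger\Rightarrow\gamma_*^\dagger<\alpha^\dagger$ (and the mirror implication), from which $\gamma_*^\dagger\notin\Gamma$ and the cut equality drop out simultaneously; this avoids a separate case analysis.

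The gap is in step (b), $\gamma_*^\dagger\notin\Gamma$, and it is a real one. Your sketch asserts that $\gamma_*^\dagger=y\in\Psi$ ``forces $[\gamma_*]=e$'' for some archimedean class $e\in[\Gamma]$. That cannot happen: you already proved $[\gamma_1]\notin[\Gamma]$, and ``$[\gamma_1]\in[\Gamma]$'' is a cut-condition on $\gamma_1$ (it says some $\delta\in\Gamma^{\neq}$ satisfies $|\delta|/n\les|\gamma_1|\les n|\delta|$, a finite conjunction of order comparisons with $\Gamma$-elements), so $[\gamma_*]\notin[\Gamma]$ as well. Moreover $\psi_*(\gamma_*)=\psi_*(\delta_0)$ does not put $\gamma_*$ and $\delta_0$ in the same archimedean class: $\psi$ need not be injective on $[\Gamma_*]$. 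So the proposed route to a contradiction does not go through, and the appeal to (AC3) is left unelaborated. You also only invoke the cofinality hypothesis to see $D^-\neq\emptyset$, whereas it is the decisive ingredient here.

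The correct way to close the gap, staying inside your framework, is to show that $D^+$ has no maximum and $D^-$ has no minimum. Suppose $y_0=\max D^+$ existed. Then $\gamma_1^\dagger>y_0$ strictly (as $\gamma_1^\dagger\notin\Gamma$), so $\gamma_1^\dagger-y_0\in\Gamma_1^>$ and by coinitiality of $\Gamma^>$ in $\Gamma_1^>$ there is $\epsilon\in\Gamma^>$ with $y_0+\epsilon\les\gamma_1^\dagger$. Since $y_0+\epsilon\les\gamma_1^\dagger<\beta^\dagger\in\Psi$ for some $\beta\in\Gamma^{\neq}$ with $|\beta|\les|\gamma_1|$, and $\Psi$ is downward closed, $y_0+\epsilon\in\Psi=D^+\sqcup D^-$. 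But $y_0+\epsilon>\max D^+$ and $y_0+\epsilon\les\gamma_1^\dagger<D^-$, a contradiction; the argument for $\min D^-$ is symmetric. With that established, your observation that $\gamma_*^\dagger=y\in\Gamma$ would have to be $\max D^+$ or $\min D^-$ yields the contradiction at once. This is precisely the mechanism the paper uses (pick $\delta\in\Gamma$ strictly between $\gamma_1^\dagger$ and the target $\alpha^\dagger$ via cofinality, then use $\Psi$ downward closed to realize $\delta$ as a $\beta^\dagger$), so the fix makes your proof converge to the published one in substance.
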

\begin{proof}
Let $\alpha \in \Gamma^{\neq}$.
We first show that:
\[
\gamma_1^\dagger < \alpha^\dagger \implies \gamma_*^\dagger < \alpha^\dagger \qquad \text{and} \qquad
\gamma_1^\dagger > \alpha^\dagger \implies \gamma_*^\dagger > \alpha^\dagger.
\]

The first implication follows exactly as in \cite[Lemma~2.7]{adh-revisitingclosedac}, which uses that $\Psi$ is downward closed but not that $(\Gamma, \psi)$ has asymptotic integration.
The second implication has a hidden use of asymptotic integration, so here are the modifications.

Suppose that $\gamma_1^\dagger>\alpha^\dagger$, so by the cofinality assumption, there is $\delta \in \Gamma$ with $\gamma_1^\dagger > \delta > \alpha^\dagger$.
By the reasoning in the first implication (see \cite[Lemma~2.7]{adh-revisitingclosedac}), it suffices to show that $\delta \in \Psi$; asymptotic integration is used in \cite[Lemma~2.7]{adh-revisitingclosedac} to get this.
By the stronger cofinality assumption here, take $\beta \in \Gamma^{\neq}$ with $|\gamma_1|>|\beta|$, so $\delta<\gamma_1^\dagger<\beta^\dagger \in \Psi$.
Thus $\delta \in \Psi$, since $\Psi$ is downward closed.

Finally, the last paragraph also shows that $\gamma_*^{\dagger} \notin \Gamma$, since $\gamma_*^{\dagger}\les\beta^\dagger \in \Psi$ and $\Psi$ is downward closed, and therefore it realizes the same cut as $\gamma_1^{\dagger}$ by the displayed implications.
\end{proof}
Alternatively, we can weaken the cofinality assumption from $\Gamma^<$ being cofinal in $\Gamma_1^<$ to $\Gamma^<$ being cofinal in $(\Gamma + \Q\gamma_1^\dagger)^<$ (as in \cite[Lemma~2.7]{adh-revisitingclosedac}) in the case that $\sup\Psi_1=0$; the only change in the proof is that now $\delta \in \Psi$ follows from $\delta<\gamma_1^\dagger \les 0$.

Another of the new lemmas is the following, \cite[Lemma~2.8]{adh-revisitingclosedac}, which we also use in~\S\ref{sec:16.1}.
\begin{lem}\label{bm:2.8}
Suppose that $(\Gamma_1, \psi_1)$ is an $H$-asymptotic couple extending $(\Gamma, \psi)$, and let $\gamma_1 \in \Gamma_1 \setminus \Gamma$ and $\alpha \in \Gamma$.
Suppose that $\gamma_1$ and $\beta \coloneqq \gamma_1^\dagger-\alpha$ satisfy $\gamma_1^\dagger \notin \Gamma$ and $\beta^\dagger \notin \Psi$, and that $|\gamma_1|>|\gamma|$ for some $\gamma \in \Gamma^{\neq}$.
Then $\gamma_1^\dagger < \beta^\dagger$.
\end{lem}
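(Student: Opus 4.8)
The plan is to reduce the claim to the single inequality $(\beta_1^\dagger)^\dagger > \beta_1^\dagger$, i.e. $\psi_1(\psi_1(\beta_1)) > \psi_1(\beta_1)$, and then to deduce that from the hypothesis $|\beta_1| \ges |\gamma|$.

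First I would extract two facts from the hypotheses. Since $\psi_1$ is constant on archimedean classes of $\Gamma_1$ and restricts to $\psi$ on $\Gamma$: if the archimedean class $[\beta_1]$ were that of some $\delta \in \Gamma^{\neq}$, then $\beta_1^\dagger = \psi_1(\beta_1) = \psi(\delta) \in \Gamma$, contradicting $\beta_1^\dagger \notin \Gamma$; so $[\beta_1]$ is not represented in $\Gamma$. In the same way $\beta_2 = \beta_1^\dagger - \alpha_2 \notin \Gamma$, and if $[\beta_2]$ were that of some $\delta \in \Gamma^{\neq}$, then $\beta_2^\dagger = \psi(\delta) \in \Psi$, contradicting $\beta_2^\dagger \notin \Psi$; so $[\beta_2]$ is not represented in $\Gamma$ either. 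Now $[\alpha_2]$ is the class of an element of $\Gamma$ while $[\beta_2] = [\beta_1^\dagger - \alpha_2]$ is not, so we cannot have $[\beta_1^\dagger] < [\alpha_2]$ (that would force $[\beta_2] = [\alpha_2]$); hence $[\alpha_2] \les [\beta_1^\dagger]$. By (HC) and (AC2) this yields $\psi_1(\alpha_2) = \psi(\alpha_2) \ges \psi_1(\beta_1^\dagger)$, and then (AC1) gives
\[
\beta_2^\dagger\ =\ \psi_1\big(\beta_1^\dagger - \alpha_2\big)\ \ges\ \min\{\psi_1(\beta_1^\dagger),\,\psi_1(\alpha_2)\}\ =\ \psi_1(\beta_1^\dagger)\ =\ (\beta_1^\dagger)^\dagger
\]
(the case $\alpha_2 = 0$ being immediate). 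So it suffices to prove $(\beta_1^\dagger)^\dagger > \beta_1^\dagger$.

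For this I would invoke $|\beta_1| \ges |\gamma|$: by (HC) and (AC2), $\beta_1^\dagger = \psi_1(\beta_1) \les \psi(\gamma)$, and the inequality is strict because $\beta_1^\dagger \notin \Gamma$ while $\psi(\gamma) \in \Gamma$. Thus $a := \beta_1^\dagger$ lies strictly below the element $\psi(\gamma)$ of $\Psi$ and is not itself in $\Gamma$, and the claim is that any such $a$ satisfies $\psi_1(a) > a$. \textbf{This is the heart of the argument}: one writes $\psi(\gamma) - \beta_1^\dagger > 0$ (an element of $\Gamma_1 \setminus \Gamma$), applies (AC3) to it with $\delta = \beta_1$, and uses (AC1) together with the fact that $\beta_1^\dagger$ is realized as the $\psi_1$-value $\psi_1(\beta_1)$, ruling out $\psi_1(\beta_1^\dagger) \les \beta_1^\dagger$ by a short case analysis on the sign and archimedean class of $\beta_1^\dagger$. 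The hypothesis $\beta_1^\dagger \notin \Gamma$ is indispensable here: an element of $\Gamma$ can lie below an element of $\Psi$ and still be a fixed point of $\psi$ — for instance $\psi(\gamma)$ itself when $\psi(\psi(\gamma)) = \psi(\gamma)$ — so the conclusion $\psi_1(a) > a$ would fail for such $a$. I expect this last step to be a short application of a standard monotonicity property of $H$-asymptotic couples rather than a lengthy computation, and it is the only place where I anticipate any real difficulty.
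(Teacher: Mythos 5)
Your Step 1 --- that $[\alpha_2]\les[\beta_1^\dagger]$ and hence $\beta_2^\dagger=\psi_1(\beta_1^\dagger-\alpha_2)\ges\psi_1(\beta_1^\dagger)=(\beta_1^\dagger)^\dagger$ by (HC) and (AC1) --- is correct. The problem is the reduction itself: the intermediate claim $(\beta_1^\dagger)^\dagger>\beta_1^\dagger$ is \emph{false} in general under the given hypotheses, so the chain $\beta_1^\dagger<(\beta_1^\dagger)^\dagger\les\beta_2^\dagger$ cannot be completed. The obstruction is precisely the case $[\beta_1^\dagger]=[\gamma^\dagger]$, which you cannot rule out. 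In that case $\psi_1(\beta_1^\dagger)=\psi(\gamma^\dagger)\in\Gamma$, and there is nothing preventing $\psi(\gamma^\dagger)<\beta_1^\dagger$. Concretely, in a $\T$-shaped $H$-asymptotic couple with $\ell_0\ll\ell_1<0$, $\psi(\ell_0)=-\ell_0$, $\psi(\ell_1)=-\ell_0-\ell_1$: adjoin a new archimedean class $[\beta_1]$ strictly between $[\ell_1]$ and $[\ell_0]$, so $\psi_1(\beta_1)=-\ell_0+\eta$ with $0<\eta<-\ell_1$ and $\eta\notin\Gamma$. Then $[\beta_1^\dagger]=[\ell_0]$, whence $(\beta_1^\dagger)^\dagger=\psi(\ell_0)=-\ell_0<-\ell_0+\eta=\beta_1^\dagger$, and with $\gamma=\ell_1$ and $\alpha_2$ chosen so that $\beta_2=\eta$ all the hypotheses hold. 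Your final paragraph invokes (AC3) and (AC1), but (AC3) applied to $\gamma^\dagger-\beta_1^\dagger$ with $\delta=\gamma$ gives only $\psi_1(\gamma^\dagger-\beta_1^\dagger)>\beta_1^\dagger$ (this is essentially \cite[Lemma~6.5.4(i)]{adamtt}), and (AC1)/(HC) give $\psi_1(\gamma^\dagger-\beta_1^\dagger)\ges\psi_1(\beta_1^\dagger)$, i.e.\ the inequality chains in the wrong direction.

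The paper's proof keeps $\gamma$ in play all the way to the end and never isolates $\psi_1(\beta_1^\dagger)$. It shows $[\beta_1^\dagger-\gamma^\dagger]\ges[\beta_1^\dagger-\alpha_2]$ (using $\beta_2^\dagger\notin\Psi\Rightarrow[\beta_1^\dagger-\alpha_2]\notin[\Gamma]$ together with $\gamma^\dagger-\alpha_2\in\Gamma$), and then
\[
\beta_1^\dagger\ =\ \min\{\beta_1^\dagger,\gamma^\dagger\}\ <\ (\beta_1^\dagger-\gamma^\dagger)^\dagger\ \les\ (\beta_1^\dagger-\alpha_2)^\dagger\ =\ \beta_2^\dagger,
\]
where the strict inequality is \cite[Lemma~6.5.4(i)]{adamtt} (a one-line consequence of (AC3)) and the weak inequality is (HC). The crucial point you are missing is that $\psi_1(\beta_1^\dagger-\gamma^\dagger)$ and $\psi_1(\beta_1^\dagger-\alpha_2)$ can both be strictly larger than $\psi_1(\beta_1^\dagger)$, so the correct comparison is between those two, not with $\psi_1(\beta_1^\dagger)$.
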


\subsubsection{Embedding lemmas}
We now turn to the proof of quantifier elimination for gap-closed and max-closed $H$-asymptotic couples, which we handle uniformly as much as possible.
To that end, suppose that $(\Gamma, \psi)$ is a divisible $H$-asymptotic couple with $\sup\Psi=0$, and let $(\Gamma_1, \psi_1)$ and $(\Gamma_*, \psi_*)$ be divisible $H$-asymptotic couples extending $(\Gamma, \psi)$ such that $(\Gamma_*, \psi_*)$ is $|\Gamma|^+$-saturated.
Let $\gamma_1 \in \Gamma_1 \setminus \Gamma$ and $(\Gamma \langle \gamma_1 \rangle, \psi_1)$ be the divisible $H$-asymptotic couple generated by $\Gamma\cup\{\gamma_1\}$ in $(\Gamma_1, \psi_1)$.

The first two lemmas are proved similarly to \cite[Lemmas~3.4 and 3.5]{adh-revisitingclosedac}, so we only sketch the proofs, omitting some details that are the same.
The third lemma is particular to the case of gap~$0$. 
For convenience, we set $0^\dagger \coloneqq \psi(0) = \infty$, so $\Gamma^\dagger = \Psi \cup \{\infty\}$.

In the next lemma, the $\sup\Psi=0$ assumption simplifies the case distinctions needed in \cite[Lemma~3.4]{adh-revisitingclosedac}, and the assumption of asymptotic integration there is used only in \textit{Cases 1--2} of that lemma, which do not occur here.
We also do not need the ``$H$-cuts'' of that paper for quantifier elimination since the $\Psi$-set in a gap-closed or a max-closed $H$-asymptotic couple is already quantifier-free definable without parameters.

\begin{lem}\label{nonewdaggers}
Suppose that $(\Gamma + \bb Q\gamma_1)^\dagger = \Gamma^\dagger$.
Then $(\Gamma\langle \gamma_1 \rangle, \psi_1)$ can be embedded into $(\Gamma_*, \psi_*)$ over~$\Gamma$.
\end{lem}
\begin{proof}
From $(\Gamma+\bb Q\gamma_1)^\dagger = \Gamma^\dagger$, we get $\Gamma\langle \gamma_1 \rangle = \Gamma+\bb Q\gamma_1$.

In case $[\Gamma+\bb Q\gamma_1] = [\Gamma]$, realizing in $\Gamma_*$ the cut in $\Gamma$ realized by $\gamma_1$ yields an embedding $i \colon \Gamma+\bb Q\gamma_1 \to \Gamma_*$ of ordered $\Q$-vector spaces fixing $\Gamma$, and the assumption $[\Gamma+\bb Q\gamma_1] = [\Gamma]$ ensures that it is an embedding of $H$-asymptotic couples (see \textit{Case 3} of \cite[Lemma~3.4]{adh-revisitingclosedac}).

In case $[\Gamma+\bb Q\gamma_1] \neq [\Gamma]$ but $\Gamma^<$ is cofinal in $(\Gamma+\bb Q\gamma_1)^<$, take $\beta \in \Gamma_1 \setminus \Gamma$ with $\beta>0$ and $[\beta] \notin [\Gamma]$.
Letting $D_1$ be the cut in $\Gamma$ realized by $\beta$ and $D_2 \coloneqq \Gamma \setminus D_1$, it follows from the cofinality assumption that $D_1$ has no greatest element and $D_2$ has no least element.
Then saturation gives $\beta_* \in \Gamma_*$ realizing the same cut as $\beta$ in $\Gamma$ and with $\beta^\dagger = \beta_*^\dagger$, yielding an embedding $i \colon \Gamma+\bb Q\gamma_1 \to \Gamma_*$ of $H$-asymptotic couples fixing $\Gamma$ (see \textit{Cases 4--6} of \cite[Lemma~3.4]{adh-revisitingclosedac}).

In case $\Gamma^<$ is not cofinal in $(\Gamma+\bb Q\gamma_1)^<$, take $\beta \in \Gamma+\bb Q\gamma_1$ satisfying $0<\beta<\Gamma^>$.
In this case, $(\Gamma, \psi)$ must have max~$0$, since if $(\Gamma, \psi)$ has gap~$0$, then $\Psi<\psi_1(\beta) \in \Psi$, a contradiction.
By saturation, take $\beta_* \in \Gamma_*$ with $0<\beta_*<\Gamma^>$ and $\beta_*^\dagger=\beta^\dagger=0$, so we get an embedding $i \colon \Gamma+\bb Q\gamma_1 \to \Gamma_*$ of $H$-asymptotic couples as before.
\end{proof}

In \cite[Lemma~3.5]{adh-revisitingclosedac}, the cofinality of $\Gamma^<$ in $\Gamma_1^<$ follows from asymptotic integration;
In the max~$0$ case it is also automatic, but needs to be added as an assumption in the gap~$0$ case, \ref{newdaggerscof:gap} in the next lemma.
\begin{lem}\label{newdaggerscof}
Suppose that $(\Gamma+\bb Q\gamma)^\dagger \neq \Gamma^\dagger$ for all $\gamma \in \Gamma_1 \setminus \Gamma$.
Also, suppose that either:
\begin{enumerate}
    \item\label{newdaggerscof:gap} $(\Gamma, \psi)$ is gap-closed, $(\Gamma_1, \psi_1)$ has gap~$0$, and $\Gamma^<$ is cofinal in $\Gamma_1^<$; or
    \item\label{newdaggerscof:max} $(\Gamma, \psi)$ is max-closed and $(\Gamma_1,\psi_1)$ has max~$0$.
\end{enumerate}
Then $(\Gamma\langle\gamma_1\rangle, \psi_1)$ can be embedded into $(\Gamma_*, \psi_*)$ over~$\Gamma$.
\end{lem}
\begin{proof}
Note that in case~\ref{newdaggerscof:max}, the cofinality assumption comes for free, for if $\gamma \in \Gamma_1 \setminus \Gamma$ satisfies $0<\gamma<\Gamma^>$, then $\gamma^\dagger = 0$ and so $(\Gamma+\bb Q\gamma)^\dagger = \Gamma^\dagger$, a contradiction.

Take $\alpha_1 \in \Gamma$ such that $(\gamma_1-\alpha_1)^\dagger \notin \Gamma^\dagger$.
Then $(\gamma_1-\alpha_1)^\dagger \notin \Gamma$, since $(\gamma_1-\alpha_1)^\dagger<0$ and in case~\ref{newdaggerscof:gap} and case~\ref{newdaggerscof:max}, $\Psi=\Gamma^{<}$ and $\Psi=\Gamma^{\les}$, respectively.
(In \cite[Lemma~3.5]{adh-revisitingclosedac}, this step uses asymptotic integration.)

The proof now proceeds exactly as in \cite[Lemma~3.5]{adh-revisitingclosedac}, substituting Lemma~\ref{bm:2.7} for \cite[Lemma~2.7]{adh-revisitingclosedac} (and $\Q$ for $\bm k$); here is a sketch.
Let $n \ges 1$.
Continue this procedure to construct sequences $\alpha_1, \alpha_2, \dots$ in $\Gamma$ and $\beta_1, \beta_2, \dots$ in $\Gamma\langle \gamma_1 \rangle \setminus \Gamma$ with $\beta_1 = \gamma_1-\alpha_1$ and $\beta_{n+1} = \beta_n^\dagger-\alpha_{n+1}$.
Then $\beta_n^\dagger<\beta_{n+1}^\dagger$ by Lemma~\ref{bm:2.8} (with $\beta_n$ in place of $\gamma_1$ and $\beta_{n+1}$ in place of $\beta$), so $[\beta_n]>[\beta_{n+1}]$, so
\[
\Gamma\langle \gamma_1 \rangle\ =\ \Gamma \oplus \bb Q\beta_1 \oplus \bb Q\beta_2 \oplus \cdots.
\]
Saturation gives $\gamma_* \in \Gamma_* \setminus \Gamma$ realizing the same cut in $\Gamma$ as $\gamma_1$, so use the same $\alpha_n$ sequence to likewise define $\beta_{*n} \in (\Gamma_*)_{\infty}$ for $n \ges 1$.
Inductively construct an increasing sequence of $\Q$-vector space embeddings
\[
i_n \colon \Gamma + \bb Q\beta_1 + \dots + \bb Q\beta_n \to \Gamma_*
\]
fixing $\Gamma$ and sending $\beta_n$ to $\beta_{*n}$ in two steps:
Given that $\beta_n$ and $\beta_{*n}$ realize the same cut in $\Gamma$, use Lemma~\ref{bm:2.7} to get that $\beta_n^{\dagger}$ and $\beta_{*n}^{\dagger}$ realize the same cut in $\Gamma$.
Then argue with archimedean classes, using Lemma~\ref{bm:2.8} in $(\Gamma_*,\psi_*)$ now, to extend the embedding to $i_{n+1}$.
The union of these maps is the desired embedding of $H$-asymptotic couples.
\end{proof}

In case~\ref{newdaggerscof:gap} of the previous lemma, we added a cofinality assumption that was not present in \cite[Lemma~3.5]{adh-revisitingclosedac}.
The next lemma handles the non-cofinal case, which is particular to gap~$0$.
\begin{lem}\label{newdaggersnocof}
Suppose that $\Gamma^< < \gamma_1 <0$, and $(\Gamma_1,\psi_1)$ and $(\Gamma_*,\psi_*)$ have gap~$0$.
Then $(\Gamma\langle\gamma_1\rangle, \psi_1)$ can be embedded into $(\Gamma_*, \psi_*)$ over~$\Gamma$.
\end{lem}
\begin{proof}
Set $\gamma_1^{\langle 0\rangle} \coloneqq \gamma_1$ and $\gamma_1^{\langle n+1 \rangle} \coloneqq (\gamma_1^{\langle n \rangle})^\dagger$ for all $n$.
We have
\[
[\gamma_1]>[\gamma_1^{\langle 1 \rangle}]>[\gamma_1^{\langle 2 \rangle}]>\cdots,
\]
and so
\[
\Gamma^< < \gamma_1 < \gamma_1^{\langle 1 \rangle} < \gamma_1^{\langle 2 \rangle}<\dots<0 \qquad \text{and} \qquad [\gamma_1^{\langle n \rangle}] \notin [\Gamma]\ \text{for all}\ n.
\]
Hence the family $(\gamma_1^{\langle n \rangle})_{n \in \bb N}$ is $\bb Q$-linearly independent over $\Gamma$ and
\[
\Gamma\langle \gamma_1 \rangle\ =\ \Gamma \oplus \bb Q\gamma_1 \oplus \bb Q\gamma_1^{\langle 1 \rangle} \oplus \bb Q\gamma_1^{\langle 2 \rangle} \oplus \cdots.
\]
By saturation, we may take $\gamma_* \in \Gamma_* \setminus \Gamma$ with $\Gamma^< < \gamma_* < 0$.
The above holds in $\Gamma_*$ with $\gamma_*$ replacing $\gamma_1$ (and $\gamma_*^{\langle n \rangle}$ defined analogously), so we obtain an embedding of $(\Gamma\langle \gamma_1 \rangle, \psi_1)$ into $(\Gamma_*, \psi_*)$ over $\Gamma$ that sends $\gamma_1$ to $\gamma_*$.
\end{proof}

\subsubsection{Quantifier elimination}

We call an $H$-asymptotic couple $(\Gamma_1, \psi_1)$ extending $(\Gamma, \psi)$ a \deft{gap-closure} of $(\Gamma, \psi)$ if it is gap-closed and it embeds over $(\Gamma, \psi)$ into every gap-closed $H$-asymptotic couple extending $(\Gamma, \psi)$.
Similarly, we call an $H$-asymptotic couple $(\Gamma_1, \psi_1)$ extending $(\Gamma, \psi)$ a \deft{max-closure} of $(\Gamma, \psi)$ if it is max-closed and it embeds over $(\Gamma, \psi)$ into every max-closed $H$-asymptotic couple extending $(\Gamma, \psi)$.
By the embedding lemmas of the previous subsection and a standard quantifier elimination test (see for example \cite[Corollary~B.11.11]{adamtt}), quantifier elimination for gap-closed and max-closed $H$-asymptotic couples reduces to the existence of gap-closures and max-closures, respectively.

For that, we need one more embedding lemma,
Let $\Psi^{\downarrow}$ be the downward closure of $\Psi$ in~$\Gamma$.
\begin{lem}\label{bm:3.1}
Let $\beta \in \Psi^{\downarrow} \setminus \Psi$ or $\beta$ be a gap in $(\Gamma, \psi)$.
Then there is an $H$-asymptotic couple $(\Gamma \oplus \bb Z\alpha, \psi^\alpha)$ extending $(\Gamma, \psi)$ such that:
\begin{enumerate}
	\item $\alpha>0$ and $\psi^\alpha(\alpha)=\beta$;
	\item\label{bm:3.1ii} given an embedding $i$ of $(\Gamma, \psi)$ into an $H$-asymptotic couple $(\Gamma^*, \psi^*)$ and $\alpha^* \in \Gamma^*$ with $\alpha^*>0$ and $\psi^*(\alpha^*)=i(\beta)$, there is a unique extension of $i$ to an embedding $j \colon (\Gamma \oplus \bb Z\alpha, \psi^\alpha) \to (\Gamma^*, \psi^*)$ with $j(\alpha)=\alpha^*$.
\end{enumerate}
\end{lem}
\begin{proof}
This follows from \cite[Lemma~9.8.7]{adamtt} with $C=\{[\gamma] : \gamma \in \Gamma^{\neq},\ \psi(\gamma)>\beta\}$, but here is an outline.
Define the ordered abelian group $\Gamma \oplus \bb Z\alpha$ so that
\[
\{0\} \cup \{ \gamma \in \Gamma^> : \psi(\gamma)>\beta \}\ <\ \alpha\ <\ \{ \gamma \in \Gamma^> : \psi(\gamma)<\beta \},
\]
and thus $[\alpha] \notin [\Gamma^{\neq}]$.
Extend $\psi$ to $\psi^{\alpha} \colon (\Gamma \oplus \bb Z\alpha)^{\neq} \to \Gamma$ by $\psi^{\alpha}(\gamma+k\alpha)\coloneqq \min\{\psi(\gamma),\beta\}$, where $\gamma \in \Gamma$ and $k \in \bb Z^{\neq}$.
To verify that this makes $(\Gamma \oplus \bb Z\alpha, \psi^\alpha)$ an $H$-asymptotic couple involves tedious case distinctions.
The axioms \ref{ac1}, \ref{ac2}, and \ref{hc} are straightforward, while \ref{ac3} is more subtle and uses \cite[Lemma~6.5.4]{adamtt}.
The universal property of $(\Gamma \oplus \bb Z\alpha, \psi^\alpha)$ is easy.
\end{proof}

\begin{cor}\label{hclosure}
Every $H$-asymptotic couple $(\Gamma, \psi)$ with gap~$0$ has a gap-closure.
Every $H$-asymptotic couple $(\Gamma, \psi)$ with $\sup\Psi=0$ has a max-closure.
\end{cor}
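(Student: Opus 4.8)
The plan is to build the gap-closure (resp.\ max-closure) by transfinite iteration of a fixed repertoire of one-step extension procedures, exactly as one constructs a real closure or a henselization, and then argue minimality by the semi-universal properties attached to each step. First I would reduce to the divisible case: replacing $(\Gamma,\psi)$ by $(\Q\Gamma,\psi)$ changes neither $\Psi$ nor the property $\sup\Psi=0$ (and whether $0\in\Psi$), and the divisible hull embeds canonically into any divisible extension, so it suffices to produce a gap-closure (resp.\ max-closure) of a nontrivial divisible $(\Gamma,\psi)$ with $\sup\Psi=0\notin\Psi$ (resp.\ $\sup\Psi=0$).

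Next I would identify the obstructions to being gap-closed: a divisible $(\Gamma,\psi)$ with $\sup\Psi=0\notin\Psi$ fails $\Psi=\Gamma^<$ exactly when there is some $\beta\in\Gamma^<\setminus\Psi$, and by the assumption $\sup\Psi=0\notin\Psi$ together with convexity of $\psi$ (the $H$-type axiom (HC)), any such $\beta$ lies in $\Psi^\downarrow\setminus\Psi$ — i.e.\ it is a point below a cofinal-below-$0$ chunk of $\Psi$ but not itself a $\psi$-value, or more precisely a ``gap'' of the asymptotic couple. This is precisely the hypothesis of Lemma~\ref{bm:3.1}, which furnishes an extension $(\Gamma\oplus\Z\alpha,\psi^\alpha)$ with $\alpha>0$ and $\psi^\alpha(\alpha)=\beta$, thereby adjoining $\beta$ to the $\Psi$-set, together with a universal property guaranteeing that this step embeds over $(\Gamma,\psi)$ into any extension already containing such an $\alpha^*$. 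One checks that the new couple is still divisible after passing to $\Q(\Gamma\oplus\Z\alpha)$ and still has $\sup\Psi=0\notin\Psi$ (the newly added value $\beta$ is $<0$, and no value $\ges 0$ is created, using (AC3) and (HC)); the max-$0$ case is identical except that the relevant defect is a $\beta\in\Psi^\downarrow\setminus\Psi$ with $\max\Psi=0$ still in force, and one must also eventually adjoin the value $0$ itself, again via Lemma~\ref{bm:3.1} applied with $\beta$ a gap, after which $\Psi=\Gamma^{\les}$ can be completed.

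Then I would iterate: starting from $(\Gamma_0,\psi_0)=(\Q\Gamma,\psi)$, at successor stages pick a defect $\beta$ as above and apply the divisible-hull operation followed by Lemma~\ref{bm:3.1}; at limit stages take the union (directed colimit), which is again a divisible $H$-asymptotic couple with $\sup\Psi=0$ (resp.\ $\notin\Psi$) since all the defining conditions are preserved under unions of chains. A cardinality bound shows the process terminates at some stage $\lambda$ with no remaining defects, i.e.\ with $\Psi=\Gamma_\lambda^<$ (resp.\ $=\Gamma_\lambda^{\les}$), so $(\Gamma_\lambda,\psi_\lambda)$ is gap-closed (resp.\ max-closed) and extends $(\Gamma,\psi)$.

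For minimality, given any gap-closed (resp.\ max-closed) $(\Gamma^*,\psi^*)\supseteq(\Gamma,\psi)$, I would construct an embedding $(\Gamma_\lambda,\psi_\lambda)\to(\Gamma^*,\psi^*)$ over $(\Gamma,\psi)$ by recursion along the same tower: the divisible-hull steps embed uniquely since $\Gamma^*$ is divisible, and each Lemma~\ref{bm:3.1} step embeds by its part~(ii) provided $(\Gamma^*,\psi^*)$ contains an element $\alpha^*>0$ with $\psi^*(\alpha^*)=i(\beta)$ — and it does, because gap-closedness gives $\Psi^*=(\Gamma^*)^<$, so every element of $(\Gamma^*)^<$, in particular $i(\beta)$, is a $\psi^*$-value. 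The main obstacle I anticipate is the bookkeeping in the preceding paragraph: verifying carefully that ``$\sup\Psi=0\notin\Psi$'' (equivalently, using the excerpt's remark, ``gap~$0$ or no value $\ges 0$'') is genuinely preserved by each Lemma~\ref{bm:3.1} step and survives limits — in particular that adjoining $\alpha$ with $\psi^\alpha(\alpha)=\beta<0$ does not accidentally enlarge $\Psi$ upward past $0$ or turn a gap into a max — which requires a short computation with (AC1)--(AC3) and (HC) on $\Gamma\oplus\Z\alpha$; once that invariant is nailed down, everything else is a routine transfinite-induction packaging of Lemma~\ref{bm:3.1}.
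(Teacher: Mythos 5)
Your proposal uses the same engine the paper does: iterate Lemma~\ref{bm:3.1} to adjoin $\psi$-preimages of the missing values, interleave divisible hulls, take unions at limits, terminate by a cardinality bound, and get the semi-universal property from Lemma~\ref{bm:3.1}(ii) plus the canonical universality of divisible hulls. For a nontrivial $(\Gamma,\psi)$ with gap~$0$ this is sound: as you observe, such a couple has $\Psi$ cofinal in $\Gamma^<$ (from $\psi(\gamma)>-\gamma$ for $\gamma>0$), so every defect $\beta\in\Gamma^<\setminus\Psi$ lies in $\Psi^\downarrow\setminus\Psi$ and Lemma~\ref{bm:3.1} applies, and the invariant $\sup\Psi=0\notin\Psi$ is preserved since each step adds only a $\psi$-value strictly below $0$.

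The gap is the case $\Gamma=\{0\}$, which your reduction does not actually dispose of. The divisible hull of $\{0\}$ is $\{0\}$, so writing ``it suffices to produce a gap-closure of a nontrivial divisible $(\Gamma,\psi)$'' is not a reduction: the trivial couple is a legitimate instance of the hypothesis (it has gap~$0$, since $\Psi=\emptyset<0<(\Gamma^>)'=\emptyset$ vacuously) and must be covered. But when $\Gamma=\{0\}$ your iteration cannot start while preserving gap~$0$: there is no $\beta\in\Gamma^<$, so $\Psi^\downarrow\setminus\Psi=\emptyset$, and the only element eligible for Lemma~\ref{bm:3.1} is the gap $\beta=0$. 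Feeding $\beta=0$ into Lemma~\ref{bm:3.1} produces $(\Z\alpha,\psi^\alpha)$ with $\psi^\alpha(\alpha)=0$, so the new couple has $\max\Psi=0$, and you have irrevocably left the gap-$0$ class. The paper sidesteps this by building, in one shot, the explicit couple $\Gamma_0=\bigoplus_n\Q\gamma_n$ with $\gamma_n<0$, $[\gamma_n]>[\gamma_{n+1}]$, and $\psi_0$ sending each vector to $\gamma_{n_1+1}$ where $\gamma_{n_1}$ is the leading term; this has gap~$0$, is nontrivial, embeds into every gap-closed couple (by the argument of Lemma~\ref{gap:newdaggersnocof}), and hence its gap-closure, obtained by your iteration, also serves as a gap-closure of $(\{0\},\psi)$. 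You would need to add this (or some equivalent bootstrapping device) to make the gap-$0$ half of the corollary complete; the max-$0$ half is fine because there the trivial couple can legitimately be launched via the gap $\beta=0$.
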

\begin{proof}
If $(\Gamma,\psi)$ has max~$0$, then by applying Lemma~\ref{bm:3.1} for $\beta \in \Psi^{\downarrow} \setminus \Psi$, taking the divisible hull, and repeating these procedures, we get a max-closure of $(\Gamma, \psi)$.
If $(\Gamma,\psi)$ has gap~$0$ and $\Gamma\neq\{0\}$, the same procedure yields a gap-closure of $(\Gamma, \psi)$.
To obtain a max-closure of $(\Gamma,\psi)$ when it has gap~$0$, first apply Lemma~\ref{bm:3.1} with $\beta=0$ and then repeat the previous argument.

It remains to show that the trivial $H$-asymptotic couple $(\{0\},\psi)$, where $\psi \colon \0 \to \{0\}$ is the empty function, has a gap-closure.
Let $\Gamma_0 \coloneqq \bigoplus_{n} \Q\gamma_n$ be the ordered $\Q$-vector space
satisfying $\gamma_n<0$ and $[\gamma_n]>[\gamma_{n+1}]$ for all $n$, and equip it with the function $\psi_0$ defined by $\psi_0(q_1\gamma_{n_1}+\dots+q_m\gamma_{n_m})=\gamma_{n_1+1}$ for all $m \ges 1$, $n_1>\dots >n_m$, and $q_1, \dots, q_m \in \Q^{\x}$.
Then by the proof of Lemma~\ref{newdaggersnocof}, $(\Gamma_0, \psi_0)$ is an $H$-asymptotic couple with gap~$0$ that embeds into every nontrivial $H$-asymptotic couple with gap~$0$.
Hence, a gap-closure of $(\Gamma_0,\psi_0)$ is a gap-closure of $(\{0\},\psi)$.
\end{proof}

Next we establish quantifier elimination, first proving it in $\ca L_{\ac, \divop} \coloneqq \ca L_{\ac} \cup \{\divop_n : n \ges 1\}$, the language $\ca L_{\ac}$ expanded by unary function symbols $\divop_n$ interpreted as division by $n$ with $\divop_n(\infty) \coloneqq \infty$.
As noted already, unlike in \cite{ad-ac,adh-revisitingclosedac}, we do not need the predicate for the $\Psi$-set, since it is already quantifier-free definable without parameters.
\begin{thm}\label{ac:qe}
The theory of gap-closed $H$-asymptotic couples has quantifier elimination, and it is the model completion of the theory of $H$-asymptotic couples with gap~$0$.
The theory of max-closed $H$-asymptotic couples has quantifier elimination, and it is the model completion of the theory of $H$-asymptotic couples $(\Gamma, \psi)$ with $\sup\Psi=0$.
\end{thm}
\begin{proof}
That these theories have quantifier elimination in $\ca L_{\ac, \divop}$ follows from Lemmas~\ref{nonewdaggers}, \ref{newdaggerscof}, and \ref{newdaggersnocof}, and Corollary~\ref{hclosure} by a standard quantifier elimination test.
To see that they have quantifier elimination in $\ca L_{\ac}$, recall how, for an $H$-asymptotic couple $(\Gamma, \psi)$, $\psi$ extends uniquely to the divisible hull $\Q\Gamma$ of $\Gamma$, preserving the property of having gap~$0$ or max~$0$.

The model completion statements follow from quantifier elimination and Corollary~\ref{hclosure}.
\end{proof}

\begin{cor}\label{ac:complete}
The theory of gap-closed $H$-asymptotic couples is complete and has a prime model.
The theory of max-closed $H$-asymptotic couples is complete and has a prime model.
\end{cor}
\begin{proof}
The trivial $H$-asymptotic couple (see the proof of Corollary~\ref{hclosure}), embeds into every $H$-asymptotic couple, yielding completeness of the two theories.
It also has gap~$0$, so its gap-closure and its max-closure are the respective prime models.
\end{proof}
As prime models of their respective complete theories, on standard model-theoretic grounds (see for instance \cite[Corollary~4.2.16]{marker}) the gap-closure and the max-closure of the trivial $H$-asymptotic couple are unique up to isomorphism.
By quantifier elimination and adding constants to the language, similar reasoning applies to the gap-closure and the max-closure of a countable $H$-asymptotic couple.
However, we do not know in general if gap-closures and max-closures are unique up to isomorphism.
One possibility is foreclosed:
By adapting the arguments of \cite[\S3]{a-remac}, we can show that if an $H$-asymptotic couple $(\Gamma, \psi)$ with gap~$0$ is not gap-closed, then a gap-closure of $(\Gamma, \psi)$ is not minimal in the sense that it has a proper gap-closed substructure containing $\Gamma$.
The same is true replacing ``gap~$0$'' with ``$\sup\Psi=0$'', ``gap-closed'' with ``max-closed'', and ``gap-closure'' with ``max-closure''.
This nonminimality result has no bearing on the rest of the paper, so we do not present details here.
With more work, one could likely obtain further model-theoretic results on such $H$-asymptotic couples, along the lines of \cite{ad-ac,adh-revisitingclosedac}.

We do use Theorem~\ref{ac:qe} via the following corollary.
For $n \ges 1$, $\alpha_1, \dots, \alpha_n \in \Gamma$, and $\gamma \in \Gamma$, we define the function $\psi_{\alpha_1,\dots,\alpha_n} \colon \Gamma_\infty \to \Gamma_\infty$ recursively by
\[
\psi_{\alpha_1}(\gamma) \coloneqq \psi(\gamma-\alpha_1) \qquad \text{and} \qquad \psi_{\alpha_1,\dots,\alpha_{n}}(\gamma) \coloneqq \psi\big(\psi_{\alpha_1,\dots,\alpha_{n-1}}(\gamma)-\alpha_{n}\big)\ \text{for}\ n \ges 2.
\]

\begin{cor}\label{ac:9.9.2}
Let $(\Gamma, \psi)$ be a gap-closed $H$-asymptotic couple 
and let $(\Gamma^*, \psi^*)$ be an $H$-asymptotic couple extending $(\Gamma, \psi)$ with gap~$0$.
Suppose $n \ges 1$, $\alpha_1, \dots, \alpha_n \in \Gamma$, $q_1, \dots, q_n \in \Q$, and $\gamma^* \in \Gamma^*$ are such that:
\begin{enumerate}
    \item $\psi^*_{\alpha_1, \dots, \alpha_n}(\gamma^*) \neq \infty$ \textnormal{(}so $\psi^*_{\alpha_1, \dots, \alpha_i}(\gamma^*) \neq \infty$ for $i=1,\dots,n$\textnormal{)};
    \item $\gamma^* + q_1 \psi^*_{\alpha_1}(\gamma^*) + \dots + q_n \psi^*_{\alpha_1, \dots, \alpha_n}(\gamma^*) \in \Gamma$ \textnormal{(}in $\Q\Gamma^*$\textnormal{)}.
\end{enumerate}
Then $\gamma^* \in \Gamma$.
\end{cor}
\begin{proof}
By Theorem~\ref{ac:qe}, $(\Gamma, \psi)$ is an existentially closed $H$-asymptotic couple with gap~$0$, so we have $\gamma \in \Gamma$ with
\[
\gamma + q_1 \psi_{\alpha_1}(\gamma) + \dots + q_n \psi_{\alpha_1, \dots, \alpha_n}(\gamma)\
=\
\gamma^* + q_1 \psi^*_{\alpha_1}(\gamma^*) + \dots + q_n \psi^*_{\alpha_1, \dots, \alpha_n}(\gamma^*).
\]
It remains to use \cite[Lemma~9.9.3]{adamtt} to obtain $\gamma^*=\gamma \in \Gamma$.
\end{proof}

The same holds with ``max-closed'' replacing ``gap-closed''and ``$\sup\Psi=0$'' replacing ``gap~$0$''.

\subsection{A maximality theorem}\label{sec:16.1}
The results and proofs of this section are adapted from \cite[\S16.1]{adamtt}.
In the next lemma and its consequences, we use the quantifier elimination for gap-closed asymptotic couples from \S\ref{sec:ac-small} to study extensions of certain asymptotic fields whose asymptotic couples are gap-closed.
Note that if $K$ is an $H$-asymptotic field with exponential integration and gap~$0$, then $\Psi = \Gamma^<$, so if additionally $\Gamma$ is divisible then $(\Gamma, \psi)$ is gap-closed.

\begin{ass} In this subsection, $K$ has small derivation.\end{ass}
The next lemma follows from Corollary~\ref{ac:9.9.2} in the same way that \cite[Lemma~16.1.1]{adamtt} follows from \cite[Proposition~9.9.2]{adamtt}, except with ``$\d$-algebraically maximal'' replacing ``asymptotically $\d$-algebraically maximal'':
$K$ is \deft{differential-algebraically maximal} (\deft{$\d$-algebraically maximal} for short) if it has no proper differentially algebraic (``$\d$-algebraic'' for short) immediate extension with small derivation.
And instead of \cite[Theorem~14.0.2]{adamtt} (used implicitly in \cite[Lemma~16.1.1]{adamtt}), we need \cite[Theorem~3.6]{pc-dh}.
We also need the notion of a pseudocauchy sequence (``pc-sequence'' for short); for the definition and basic facts about pc-sequences, see \cite[\S2.2]{adamtt}, and for pc-sequences in valued differential fields, see \cite[\S4.4 and \S6.9]{adamtt}.
In an immediate extension $L$ of $K$, every element of $L \setminus K$ is the pseudolimit of a pc-sequence in $K$ that has no pseudolimit in $K$, called \deft{divergent in $K$}, and divergent pc-sequences in $K$ can be of $\d$-algebraic or $\d$-transcendental type over~$K$.
The exact definitions are not really important here, for which see the end of \cite[\S4.4]{adamtt}.
But note that if the derivation on $\bm k$ is nontrivial, then $K$ is $\d$-algebraically maximal if and only if there is no divergent pc-sequence in $K$ of $\d$-algebraic type over $K$ by \cite[Lemma~6.9.3]{adamtt}.

\begin{lem}\label{adh16.1.1}
Suppose that $K$ is a $\d$-henselian $H$-asymptotic field with exponential integration and gap~$0$ whose value group is divisible.
Let $L$ be an $H$-asymptotic extension of $K$ with gap~$0$ and $\bm k_L = \bm k$, and suppose that there is no $y \in L \setminus K$ such that $K\langle y \rangle$ is an immediate extension of $K$.
Let $f \in L \setminus K$.
Then the vector space $\Q\Gamma_{K\langle f \rangle}/\Gamma$ is infinite dimensional.
\end{lem}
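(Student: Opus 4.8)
The plan is to argue by contradiction: suppose $\Q\Gamma_{K\langle f\rangle}/\Gamma$ is finite dimensional. Since $K$ has divisible value group, gap~$0$, exponential integration, and is $H$-asymptotic, its asymptotic couple $(\Gamma,\psi)$ is gap-closed, hence by Theorem~\ref{ac:qe} (via Corollary~\ref{ac:9.9.2}) it is an existentially closed $H$-asymptotic couple with gap~$0$. We want to leverage this against the assumption that $K$ has no proper immediate $\d$-algebraic extension inside $L$. Because $\bm k_L=\bm k$ and $K\langle f\rangle$ has finite-dimensional value group growth over $\Gamma$, the extension $K\langle f\rangle$ is ``thin'' enough that $f$ should be approximable by a pc-sequence from $K$, which would produce a divergent pc-sequence over $K$ of $\d$-algebraic type, contradicting $\d$-algebraic maximality (which follows from $\d$-henselianity plus the hypotheses, via \cite[Theorem~3.6]{dh} in the asymptotic setting, or the fact that divergent pc-sequences of $\d$-algebraic type cannot exist by \cite[Lemma~6.9.3]{adamtt} when the induced derivation on $\bm k$ is nontrivial — which holds here since $K$ has gap~$0$).

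The key steps, in order, would be as follows. First I would reduce to understanding the value group of $K\langle f\rangle$: since $\bm k_{K\langle f\rangle}=\bm k$ (as $\bm k_L=\bm k$) and, by hypothesis, $K\langle f\rangle$ is not an immediate extension of $K$, we must have $\Gamma_{K\langle f\rangle}\neq\Gamma$; the assumption is that $\Q\Gamma_{K\langle f\rangle}/\Gamma$ is finite dimensional, say spanned by $v(g_1),\dots,v(g_m)$ over $\Q\Gamma$. Second, I would use the structure of $H$-asymptotic fields to control how the $\psi$-function and the logarithmic-derivative iterates $\psi_{\alpha_1,\dots,\alpha_n}$ behave on these new value group elements: applying $\psi$ to $v(g_i-a)$ for $a\in K$ produces elements that, via Corollary~\ref{ac:9.9.2}, are forced to already lie in $\Gamma$ unless the relevant iterate equals $\infty$. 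Iterating, one shows that the value group of the asymptotic couple generated by $\Gamma$ together with the $v(g_i)$'s stabilizes after finitely many steps, and in fact $\Q\Gamma_{K\langle f\rangle}$ must equal $\Q\Gamma$ — i.e.\ $K\langle f\rangle$ is already an immediate extension after real closure — contradicting $\Gamma_{K\langle f\rangle}\neq\Gamma$. More carefully: the finite-dimensionality forces each $v(g_i)$, after subtracting a suitable element of $\Gamma$, to have $\psi$-value generating no genuinely new elements, and Corollary~\ref{ac:9.9.2} closes the loop. Third, once we know $\Q\Gamma_{K\langle f\rangle}=\Q\Gamma$ and $\bm k_{K\langle f\rangle}=\bm k$, the real closure of $K\langle f\rangle$ is an immediate $\d$-algebraic extension of the real closure of $K$, which by $\d$-algebraic maximality of $K$ (inherited by $K^{\rc}$ via the remarks in \S\ref{sec:prelim}, or handled directly) forces $f\in K^{\rc}\cap L$; a short additional argument using that $L$ has gap~$0$ and $\bm k_L=\bm k$ then pins $f$ down to lie in $K$ itself, the contradiction.

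The main obstacle I expect is step two: extracting from ``$\Q\Gamma_{K\langle f\rangle}/\Gamma$ finite dimensional'' a genuine contradiction requires carefully tracking the interaction between the field operations on $K\langle f\rangle$ and the induced $\psi$-function on its value group, and showing that finitely many applications of logarithmic differentiation to elements of $K\langle f\rangle$ cannot escape a finite-dimensional $\Q$-vector space over $\Gamma$ without already producing elements of $\Gamma$. This is precisely the kind of bookkeeping that Corollary~\ref{ac:9.9.2} is designed to handle — it says that an element $\gamma^*$ whose orbit under the $\psi_{\alpha_1,\dots,\alpha_n}$ maps falls back into $\Gamma$ after a $\Q$-linear combination must itself be in $\Gamma$ — so the real work is verifying the hypotheses (i) and (ii) of that corollary in our situation, namely that the relevant iterates are finite and that the appropriate $\Q$-linear combination lands in $\Gamma$, which follows from finite-dimensionality by a dimension-counting argument on the tower $\Gamma\subseteq\Gamma+\Q v(g_1)\subseteq\cdots$. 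The passage from ``value group does not grow and residue field does not grow'' to ``extension is immediate after real closure, hence trivial'' is then the comparatively routine invocation of \cite[Theorem~3.6]{dh} and the $\d$-algebraic maximality of $\d$-henselian asymptotic fields with gap~$0$.
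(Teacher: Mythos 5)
Your proposal correctly identifies some ingredients --- the absence of divergent pc-sequences of $K$ with pseudolimits in $L$ (via $\d$-algebraic maximality of $K$), the relevance of $\bm k_L=\bm k$, and Corollary~\ref{ac:9.9.2} --- but the central construction is missing and the framing as a proof by contradiction hides what actually drives the result. The paper proves the statement directly: it builds an explicit infinite $\Q$-linearly independent family in $\Gamma_{K\langle f\rangle}$ over $\Gamma$ via a recursion. Since no divergent pc-sequence of $K$ has a pseudolimit in $L$, the set $v_L(y-K)$ attains a maximum for every $y\in L\setminus K$, and because $\bm k_L=\bm k$ this maximum is never in $\Gamma$. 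Starting from $f_0\coloneqq f$, pick $b_0\in K$ realizing $\max v_L(f_0-K)$ and set $f_1\coloneqq(f_0-b_0)^\dagger$; exponential integration of $K$ shows $f_1\notin K$ (else $f_1=g^\dagger$ for some $g\in K^\times$, whence $v_L(f_0-b_0)=vg\in\Gamma$). Iterating gives sequences $(f_n)$ in $L\setminus K$ and $(b_n)$ in $K$ with $v_L(f_n-b_n)\notin\Gamma$ for all $n$, and a second use of exponential integration (writing $f_n-b_n=((f_{n-1}-b_{n-1})/a_n)^\dagger$ with $a_n^\dagger=b_n$, $\alpha_n\coloneqq v(a_n)$) links them by $v_L(f_n-b_n)=\psi_{L,\alpha_1,\dots,\alpha_n}(v_L(f_0-b_0))$. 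Only then does Corollary~\ref{ac:9.9.2} enter: a hypothetical $\Q$-linear dependence of the $v_L(f_n-b_n)$ over $\Gamma$ would put some $v_L(f_m-b_m)$ back into $\Gamma$, which is impossible.

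The concrete gap in your step two is that finite-dimensionality of $\Q\Gamma_{K\langle f\rangle}/\Gamma$ does not by itself produce the hypotheses of Corollary~\ref{ac:9.9.2}: the generators $v(g_1),\dots,v(g_m)$ you introduce are not tied to iterated $\psi$-values of any single $\gamma^*$, which is the shape that corollary requires. The assertion that ``finite-dimensionality forces each $v(g_i)$ [\ldots] to have $\psi$-value generating no genuinely new elements'' is also backwards: the content of the lemma is precisely that applying $\psi$ to a new value-group element, after a suitable translation into $\Gamma$ supplied by exponential integration, always produces yet another element outside $\Gamma$, so the recursion never closes up. Without the maximal-approximation recursion there is no way to set up the $\Q$-linear combination required by Corollary~\ref{ac:9.9.2}(ii), and the dimension count you sketch does not connect to its hypotheses. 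Your third step is also superfluous: $\Gamma$ is divisible, so $\Q\Gamma_{K\langle f\rangle}=\Gamma$ would already give $\Gamma_{K\langle f\rangle}=\Gamma$ and hence $K\langle f\rangle$ immediate over $K$, contradicting the hypothesis with no passage to real closure --- but that observation is only useful after one knows the dimension collapses to zero, which your argument never establishes.
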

\begin{proof}
First, we argue that there is no divergent pc-sequence in $K$ with a pseudolimit in $L$.
Towards a contradiction, suppose that $(a_\rho)$ is a divergent pc-sequence in $K$ with pseudolimit $\ell \in L$.
Since $K$ is $\d$-henselian and asymptotic, it is $\d$-algebraically maximal \cite[Theorem~3.6]{pc-dh}, so $(a_\rho)$ is not of $\d$-algebraic type over $K$.
Hence $(a_\rho)$ is of $\d$-transcendental type over $K$, so $K\langle \ell \rangle$ is an immediate extension of $K$ by \cite[Lemma~6.9.1]{adamtt}, a contradiction.

Thus for all $y \in L \setminus K$, the set $v(y-K) \subseteq \Gamma_L$ has a maximum.
Take $b_0 \in K$ with $v(f-b_0) = \max v(f-K)$, so $v(f-b_0) \notin \Gamma$ since $\bm k_L = \bm k$.
Set $f_1 \coloneqq (f-b_0)^{\dagger} \in L$.
Then $f_1 \notin K$, since otherwise, there would be $g \in K^{\x}$ with $(f-b_0)^\dagger = g^\dagger$, so $v(f-b_0) = vg \in \Gamma$, a contradiction.
Setting $f_0 \coloneqq f$, this leads to sequences $(f_n)$ in $L \setminus K$ and $(b_n)$ in $K$ such that for all~$n$:
\begin{enumerate}
    \item $v(f_n-b_n) = \max v(f_n-K) \notin \Gamma$;
    \item $f_{n+1} = (f_n-b_n)^\dagger$.
\end{enumerate}
To finish the proof of the lemma, show that
\[v(f_0-b_0),\ v(f_1-b_1),\ \dots\ \text{are $\Q$-linearly independent over}\ \Gamma.\]
If $\Gamma = \{ 0 \}$, then this follows from \cite[Lemma~9.2.10(iv)]{adamtt}.
Otherwise, the idea is that by taking for $n \ges 1$ an $a_n \in K^{\x}$ with $a_n^{\dagger}=b_n$ and setting $\alpha_n \coloneqq v(a_n)$, Corollary~\ref{ac:9.9.2} (with one of the $v(f_n-b_n)$ in the role of $\gamma$) shows that a $\Q$-linear dependence is impossible since each $v(f_n-b_n) \notin \Gamma$.
\end{proof}

The next result is a strengthening of \cite[Theorem~3.6]{pc-dh} under additional hypotheses, including that the asymptotic couple of $K$ is gap-closed.
It follows from the Zariski--Abhyankar inequality by combining \cite[Theorem~3.6]{pc-dh} and Lemma~\ref{adh16.1.1} just as \cite[Theorem~16.0.3]{adamtt} combines \cite[Theorem~14.0.2]{adamtt} and \cite[Lemma~16.1.1]{adamtt}.
This result is used in \S\ref{sec:dhl} to prove the minimality of $\d$-Hensel-Liouville closures.
\begin{thm}\label{dalgmaxupgrade}
Suppose that $K$ is a $\d$-henselian $H$-asymptotic field with exponential integration and gap~$0$ whose value group is divisible.
Then $K$ has no proper $\d$-algebraic $H$-asymptotic extension with gap~$0$ and the same residue field.
\end{thm}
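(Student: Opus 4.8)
The plan is to derive a contradiction by confronting Lemma~\ref{adh16.1.1} with an elementary transcendence-degree bound. Suppose $L$ is a proper $\d$-algebraic $H$-asymptotic extension of $K$ with gap~$0$ and $\bm k_L=\bm k$, and fix $f\in L\setminus K$. Writing $K\langle f\rangle$ for the valued differential subfield of $L$ generated by $f$ over $K$, I would bound $\dim_\Q\bigl(\Q\Gamma_{K\langle f\rangle}/\Gamma\bigr)$ in two incompatible ways and conclude $L=K$.

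First I would verify the hypotheses of Lemma~\ref{adh16.1.1} for this $L$: all are immediate except the clause that there is no $y\in L\setminus K$ with $K\langle y\rangle$ an immediate extension of $K$. For that, I would note that $K$, being $\d$-henselian and (as an $H$-asymptotic field) asymptotic, is $\d$-algebraically maximal by \cite[Theorem~3.6]{dh}, that is, has no proper $\d$-algebraic immediate extension with small derivation. But any such $K\langle y\rangle\subseteq L$ would be precisely that: it is $\d$-algebraic over $K$ because $L$ is, and it has small derivation because $L$ does and $K\langle y\rangle$ is a valued differential subfield of $L$ (so $\der\cao_{K\langle y\rangle}\subseteq\der\cao_L\cap K\langle y\rangle\subseteq\cao_L\cap K\langle y\rangle=\cao_{K\langle y\rangle}$). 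Hence no such $y$ exists, and Lemma~\ref{adh16.1.1} yields that $\Q\Gamma_{K\langle f\rangle}/\Gamma$ is infinite dimensional.

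For the opposing bound I would use only that $f$ is $\d$-algebraic over $K$. If $r$ is the order of its minimal annihilator $P$, then by minimality the separant of $P$ does not vanish at $f$, so differentiating $P(f)=0$ gives $f^{(r+1)}\in K(f,f',\dots,f^{(r)})$, and by induction $K\langle f\rangle=K(f,f',\dots,f^{(r)})$; in particular $\operatorname{trdeg}\bigl(K\langle f\rangle/K\bigr)\le r+1$ is finite. By the Abhyankar inequality (a standard fact for valued field extensions of finite transcendence degree), $\dim_\Q\bigl(\Q\Gamma_{K\langle f\rangle}/\Q\Gamma\bigr)\le\operatorname{trdeg}\bigl(K\langle f\rangle/K\bigr)$, and $\Q\Gamma=\Gamma$ since $\Gamma$ is divisible. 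This contradicts the previous paragraph, so $L=K$.

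The real content of the theorem is carried by Lemma~\ref{adh16.1.1}, and hence ultimately by the quantifier elimination for gap-closed $H$-asymptotic couples; granting that lemma, the only step here requiring any thought is the observation that $K$ is $\d$-algebraically maximal, which is exactly what makes Lemma~\ref{adh16.1.1} applicable. I do not expect a genuine obstacle beyond this bookkeeping and the routine differential-algebra-plus-valuation-theory bound.
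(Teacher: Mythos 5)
Your proof is correct and follows the paper's argument exactly: apply Lemma~\ref{adh16.1.1} (whose hypotheses you verify via $\d$-algebraic maximality of $K$, citing \cite[Theorem~3.6]{dh}) to get infinite dimensionality of $\Q\Gamma_{K\langle f\rangle}/\Gamma$, then contradict this with the Zariski--Abhyankar inequality and the finiteness of $\operatorname{trdeg}(K\langle f\rangle/K)$. The extra detail you supply (why $K\langle y\rangle$ inherits small derivation from $L$, why a $\d$-algebraic element yields finite transcendence degree) is left implicit in the paper but is routine.
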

% \begin{proof}
% Let $L$ be a proper $\d$-algebraic $H$-asymptotic extension of $K$ with gap~$0$ and $\bm k_L=\bm k$.
% Since $K$ is $\d$-algebraically maximal \cite[Theorem~3.6]{pc-dh}, there is no $y \in L \setminus K$ such that $K\langle y \rangle$ is an immediate extension of $K$.
% But for $f \in L \setminus K$, the transcendence degree of $K\langle f \rangle$ over $K$ is finite, so the vector space $\Q\Gamma_{K\langle f \rangle}/\Gamma$ is finite dimensional by the Zariski--Abhyankar inequality \cite[Corollary~3.1.11]{adamtt}, contradicting Lemma~\ref{adh16.1.1}.
% \end{proof}

By quantifier elimination for max-closed $H$-asymptotic couples, the above result holds with ``max~$0$'' replacing ``gap~$0$'', where we say an asymptotic field $K$ has \deft{max~$0$} if its asymptotic couple does.
% \begin{thm}
% If $K$ is a $\d$-henselian $H$-asymptotic field with exponential integration and max~$0$ whose value group is divisible, then $K$ has no proper $\d$-algebraic $H$-asymptotic extension with max~$0$ and the same residue field.
% \end{thm}

We now summarize some facts about the asymptotic couple of the $K\langle f \rangle$ from Lemma~\ref{adh16.1.1} that we need in the next subsection.

\begin{lem}\label{adh16.1.2}
Let $K$, $L$, and $f$ be as in Lemma~\ref{adh16.1.1}, and let the sequences $(f_n)$, $(b_n)$, $(a_n)_{n \ges 1}$, and $(\alpha_n)_{n \ges 1}$ be as in the proof of Lemma~\ref{adh16.1.1}.
Set $\beta_n \coloneqq v_L(f_n-b_n)-\alpha_{n+1}$.
The asymptotic couple $(\Gamma_{K\langle f \rangle}, \psi)$ of $K\langle f \rangle$ has the following properties:
\begin{enumerate}
    \item\label{adh16.1.2i} $(\beta_n)$ is $\Q$-linearly independent over $\Gamma$ and $\Gamma_{K\langle f \rangle} = \Gamma \oplus \bigoplus_n \Z\beta_n$ {\normalfont(}internal direct sum{\normalfont)};
    \item\label{adh16.1.2ii} $(\beta_n^\dagger)$ is $\Q$-linearly independent over $\Gamma$, so $\beta_n^\dagger \notin \Gamma$ and $\beta_m^\dagger \neq \beta_n^\dagger$ for all $m \neq n$;
    \item\label{adh16.1.2iii} $\psi(\Gamma_{K\langle f \rangle}^{\neq}) = \Psi \cup \{ \beta_n^\dagger : n \in \N \}$;
    \item\label{adh16.1.2iv} $[\beta_n] \notin [\Gamma]$ for all $n$, $[\beta_m] \neq [\beta_n]$ for all $m \neq n$, and  $[\Gamma_{K\langle f \rangle}] = [\Gamma] \cup \{ [\beta_n] : n \in \N \}$;
    \item\label{adh16.1.2v} if $\Gamma^<$ is cofinal in $\Gamma_{K\langle f \rangle}^<$, then $\beta_0^\dagger < \beta_1^\dagger < \beta_2^\dagger < \cdots$.
\end{enumerate}
\end{lem}
\begin{proof}
The $\Q$-linear independence of $(\beta_n)$ over $\Gamma$ follows from the proof of Lemma~\ref{adh16.1.1}.
Items \ref{adh16.1.2i}--\ref{adh16.1.2iv} are proved exactly as in \cite[Lemma~16.1.2]{adamtt}, without modification, but are sketched below.
The only difference is \ref{adh16.1.2v}: In \cite[Lemma~16.1.2]{adamtt}, cofinality is established unconditionally. Here, Lemma~\ref{bm:2.8} immediately gives \ref{adh16.1.2v}.

The hardest part is \ref{adh16.1.2i}, which involves showing that the element $P(f) \in K\langle f \rangle$, where $P \in K\{Y\}^{\neq}$, can be expressed as a polynomial in the ``monomials'' $\f m_n \coloneqq (f_n-b_n)/a_{n+1}$.
Since $v(\f m_n) = \beta_n$, item \ref{adh16.1.2i} then follows from the $\Q$-linear independence of $(\beta_n)$ over $\Gamma$.
With that done, the $\Q$-linear independence of $(\beta_n^{\dagger})$ over $\Gamma$ follows from the $\Q$-linear independence of $(\beta_n)_{n \ges 1}$ over $\Gamma$ by noticing that $\beta_n^{\dagger} = \beta_{n+1}+\alpha_{n+2}$, and the rest of \ref{adh16.1.2ii}--\ref{adh16.1.2iv} follows.
\end{proof}
%%Full details
% \begin{proof}
% Set $\f m_n \coloneqq (f_n-b_n)/a_{n+1}$, so $v_L(\f m_n)=\beta_n$.
% Then
% \begin{align*}
% \f m_{n+1}\ &=\ \frac{f_{n+1}-b_{n+1}}{a_{n+2}}\ =\ \frac{(f_n-b_n)^\dagger-b_{n+1}}{a_{n+2}}\ =\ \frac{(a_{n+1}\f m_n)^\dagger-b_{n+1}}{a_{n+2}}\ =\ \frac{a_{n+1}^\dagger+\f m_n^\dagger-b_{n+1}}{a_{n+2}}\\
% &=\ \frac{\f m_n^\dagger}{a_{n+2}}.
% \end{align*}
% Hence $\f m_n'=a_{n+2}\f m_n \f m_{n+1}$.
% From $f=b_0+a_1\f m_0$ we get $f' = b_0' + a_1'\f m_0 + a_1 a_2 \f m_0 \f m_1$, so induction yields $F_n \in K[Y_0, \dots, Y_n]$ with $\deg F_n \les n+1$ and $f^{(n)}=F_n(\f m_0, \dots, \f m_n)$.
% Thus for $P \in K\{Y\}^{\neq}$ of order at most $r$ we have $P(f)=\sum_{\bm i \in I} a_{\bm i} \f m_0^{i_0} \dots \f m_r^{i_r}$, where $I$ is a nonempty finite set of indices $\bm i=(i_0, \dots, i_r) \in \N^{1+r}$.
% Note that by the proof of Lemma~\ref{adh16.1.1}, the family $(\beta_n)$ is $\Q$-linearly independent over $\Gamma$.
% Hence $v_L\big(P(f)\big) \in \Gamma + \sum_n \N\beta_n$, which proves (i).

% By the proof of Lemma~\ref{adh16.1.1}, we also have
% \[
% \beta_n^\dagger\ =\ \psi_L\big(v_L(f_n-b_n)-\alpha_{n+1}\big)\ =\ v_L(f_{n+1}-b_{n+1})\ =\ \beta_{n+1}+\alpha_{n+2}\ \notin\ \Gamma.
% \]
% Thus the family $(\beta_n^\dagger)$ is $\Q$-linearly independent over $\Gamma$, since the family $(\beta_n)$ is, proving (ii).

% Note that (iii) follows from (i) and (ii).
% From (ii), we get $[\beta_n] \notin [\Gamma]$ and $[\beta_m] \neq [\beta_n]$ for all $m \neq n$, so (iv) now follows from (i).
% Finally, (v) follows from (ii) and Lemma~\ref{bm:2.8}.
% \end{proof}

\subsection{Further consequences in the ordered setting}

Now we develop further the results of the previous subsection in the pre-$H$-field setting.
If $K$, $L$, and $f$ are as before but additionally pre-$H$-fields, and $M$ is a pre-$H$-field with $g \in M$ realizing the same cut in $K$ as $f$, our goal is to embed $K\langle f \rangle$ into $M$ over $K$ by sending $f$ to $g$.
This is accomplished in Proposition~\ref{realizecutembed} and Lemma~\ref{notcofinal}.
The former handles the case that $\Gamma^<$ is cofinal in $\Gamma_{K\langle f \rangle}^<$, which follows from the previous subsection by making small adjustments to the arguments for analogous results in \cite[\S16.1]{adamtt}.
The case that $\Gamma^<$ is not cofinal in $\Gamma_{K\langle f \rangle}^<$ does not occur in that context, and here is handled in Lemma~\ref{notcofinal} by similar ideas.

In the next lemma, the reader may assume without harm that $K$, $L$, and $f$ additionally satisfy the assumptions of Lemma~\ref{adh16.1.1}, but the lemma is stated in more generality to clarify which assumptions are used.
In particular, it does not assume that $K$ and $L$ have gap~$0$ or that $K$ is $\d$-henselian.
Small modifications to the proof of \cite[Lemma~16.1.4]{adamtt} yield the following generalization.

\begin{lem}\label{realizecut}
Let $K$ be a pre-$H$-field with exponential integration and let $L$ be a pre-$H$-field extension of $K$ with $\bm k_L = \bm k$.
Suppose that $f \in L \setminus K$ and $b_0 \in K$ satisfy:
\begin{enumerate}
    \item $v_L(f-K) \subseteq \Gamma_L$ has a maximum and $v_L(f-b_0) = \max v_L(f-K)$;
    \item $\Gamma^<$ is cofinal in $\Gamma_{K\langle f \rangle}^<$.
\end{enumerate}
Suppose that $M$ is a pre-$H$-field extension of $K$ and $g \in M$ realizes the same cut in $K$ as~$f$.
Then $v_M(g-b_0) = \max v_M(g-K) \notin \Gamma$ and $(g-b_0)^\dagger$ realizes the same cut in $K$ as $(f-b_0)^{\dagger}$.
\end{lem}
\begin{proof}
Let $\alpha \in \Gamma$ and $b \in K$.
First we use the convexity of $\ca O_{K\langle f \rangle}$ and the cofinality assumption to show that
\[
v_L(f-b)<\alpha \iff v_M(g-b)<\alpha \qquad \text{and} \qquad v_L(f-b)>\alpha \iff v_M(g-b)>\alpha.
\]
To see this, take $a \in K^>$ with $va = \alpha$.
Suppose that $v_L(f-b)<\alpha$, so $|f-b|>a$.
Hence $|g-b|>a$, and so $v_M(g-b)\les \alpha$.
To get the strict inequality, use the cofinality assumption to take $\delta \in \Gamma$ with $v_L(f-b)<\delta<\alpha$, and thus $v_M(g-b) \les \delta < \alpha$.
One proves similarly that $v_L(f-b)>\alpha \implies v_M(g-b)>\alpha$.
Finally, consider the case that $v_L(f-b)=\alpha$.
This yields $f-b \sim ua$ for $u \in K$ with $u \asymp 1$, since $\bm k = \bm k_L$.
Convexity yields $|u|a/2 < |f-b| < 2|u|a$, so $|u|a/2 < |g-b| < 2|u|a$, and thus $v_M(g-b)=va=\alpha$, completing the proof of the displayed equivalences.
Hence, $v_M(g-b_0) \notin \Gamma$, since $v_L(f-b_0) \notin \Gamma$.
This in turn yields $v_M(g-b_0)=\max v_M(g-K)$.
It also follows that $(g-b_0)^\dagger \notin K$, as otherwise $(g-b_0)^\dagger = b^\dagger$ for some $b \in K^{\x}$, in which case $v_M(g-b_0)=vb \in \Gamma$.

As written, the argument in \cite[Lemma~16.1.4]{adamtt} that $(g-b_0)^\dagger$ realizes the same cut in $K$ as $(f-b_0)^\dagger$ uses that $K$ and $L$ are $H$-fields, so we give the following modified version.
First, we may assume that $f>b_0$, so $g>b_0$.
Suppose towards a contradiction that we have $h \in K$ with $(f-b_0)^\dagger<h$ and $h<(g-b_0)^\dagger$.
Take $\phi \in K^>$ with $h=\phi^\dagger$ and set $s \coloneqq (f-b_0)/\phi$.
Then we have $s>0$ and $s^\dagger=(f-b_0)^\dagger-h<0$.
Since $L$ is a pre-$H$-field, $v_L(s) \ges 0$ by \cite[Lemma~10.5.2(i)]{adamtt}, but since $v_L(f-b_0) \notin \Gamma$, we have $v_L(s)>0$.
Thus $0<s<1$.
Similarly, $h<(g-b_0)^\dagger$ gives $t \coloneqq (g-b_0)/\phi>0$ and $t^\dagger>0$, so $v_M(t)<0$ and thus $t>1$.
Putting this together yields
\[
f\ =\ b_0+\phi s\ <\ b_0+\phi\ \qquad \text{and}\ \qquad b_0+\phi\ <\ b_0 + \phi t\ =\ g,
\]
contradicting that $f$ and $g$ realize the same cut in $K$.
The other case, that there is $h \in K$ with $(f-b_0)^\dagger>h$ and $h>(g-b_0)^\dagger$, is symmetric.
\end{proof}

The next embedding lemma follows from Lemma~\ref{realizecut} in the same way that \cite[Proposition~16.1.5]{adamtt} follows from \cite[Lemma~16.1.4]{adamtt}, without modification.
The idea is to use Lemma~\ref{realizecut} to carry out the proof of Lemma~\ref{adh16.1.2} in $M$ and thereby construct the desired embedding.

\begin{prop}\label{realizecutembed}
Let $K$, $L$, and $f$ be as in Lemma~\ref{adh16.1.1} and suppose that $\Gamma^<$ is cofinal in $\Gamma_{K\langle f \rangle}^<$.
Additionally, suppose that $K$ and $L$ are pre-$H$-fields and $L$ is a pre-$H$-field extension of $K$ with gap~$0$.
Suppose that $M$ is a pre-$H$-field extension of $K$ with gap~$0$ and $g \in M$ realizes the same cut in $K$ as $f$.
Then there exists an embedding $K \langle f \rangle \to M$ over $K$ with $f \mapsto g$.
\end{prop}

Above we assumed that $\Gamma^<$ was cofinal in $\Gamma_{K\langle f \rangle}^<$, and now we turn to the other case, which does not occur in the $H$-field setting of \cite[\S16.1]{adamtt}.

\begin{lem}\label{notcofinal}
Let $K$ be a pre-$H$-field and $L$ be a pre-$H$-field extension of $K$ with gap~$0$.
Let $f \in L^>$ with $\Gamma^< < v_L(f) < 0$.
Suppose that $M$ is a pre-$H$-field extension of $K$ with gap~$0$ and $g \in M^>$ satisfies $\Gamma^< < v_M(g) < 0$.
Then there is an embedding $K\langle f \rangle \to M$ over $K$ with $f \mapsto g$.
\end{lem}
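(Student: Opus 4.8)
The plan is to follow the pattern of the proofs of Lemmas~\ref{adh16.1.1}, \ref{adh16.1.2}, and~\ref{gap:newdaggersnocof}: describe $K\langle f \rangle$ explicitly, produce the same structure for $g$ inside $M$, and match them. Set $f^{\langle 0 \rangle} \coloneqq f$, $f^{\langle n+1 \rangle} \coloneqq (f^{\langle n \rangle})^\dagger$, and $\gamma_n \coloneqq v_L(f^{\langle n \rangle})$, so $\gamma_0 = v_L(f)$ and $\gamma_{n+1} = \psi_L(\gamma_n)$. Since $L$ has gap~$0$, every $\delta < 0$ in $\Gamma_L$ satisfies $\psi_L(\delta) > \delta$, and $[\psi_L(\delta)] < [\delta]$ by \cite[Lemma~9.2.10(iv)]{adamtt}; moreover, $\Gamma^< < \gamma_0 < 0$ together with $\sup \Psi = 0$ gives $\psi_L(\gamma_0) > \Gamma^<$, so $[\gamma_0] \notin [\Gamma]$ (otherwise $\psi_L(\gamma_0) = \psi(\delta) \in \Psi \subseteq \Gamma^<$ for some $\delta \in \Gamma^{\neq}$). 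Iterating yields $\Gamma^< < \gamma_0 < \gamma_1 < \gamma_2 < \cdots < 0$ with $[\gamma_0] > [\gamma_1] > \cdots$, all these classes below every nonzero archimedean class of $\Gamma$; in particular the $\gamma_n$ are $\Q$-linearly independent over $\Gamma$. As in the proof of Lemma~\ref{adh16.1.2}, induction on $(f^{\langle n \rangle})' = f^{\langle n+1 \rangle} f^{\langle n \rangle}$ shows $f^{(k)} = f \cdot g_k$ with $g_k$ an integer polynomial in $f^{\langle 1 \rangle}, \dots, f^{\langle k \rangle}$, so $K\langle f \rangle = K(f^{\langle n \rangle} : n \in \N)$; and since each $f^{\langle n \rangle}$ is value-transcendental over $K(f^{\langle 0 \rangle}, \dots, f^{\langle n-1 \rangle})$ (the class of its valuation lying below all classes present there), the family $(f^{\langle n \rangle})$ is algebraically independent over $K$, $\Gamma_{K\langle f \rangle} = \Gamma \oplus \bigoplus_n \Z \gamma_n$, and $\bm k_{K\langle f \rangle} = \bm k$. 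Writing $P(f)$, for $P \in K\{Y\}^{\neq}$, as a $K$-linear combination of monomials $\prod_n (f^{\langle n \rangle})^{i_n}$, these monomials have pairwise distinct valuations that are independent over $\Gamma$, so $v_L$ on $K\langle f \rangle$ is exactly the gaussian-type valuation determined by $v$ on $K$ and the values $\gamma_n$, with no cancellation. Finally $f = f^{\langle 0 \rangle} > 0$, and $v_L(f^{\langle n \rangle}) < 0$ forces $f^{\langle n \rangle} > \ca O_{K\langle f \rangle}$, so (PH3) gives $(f^{\langle n \rangle})' > 0$, hence $f^{\langle n+1 \rangle} > 0$; thus every $f^{\langle n \rangle}$ is positive.

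I would then run the identical construction for $g$ in $M$, with $g^{\langle 0 \rangle} \coloneqq g$, $g^{\langle n+1 \rangle} \coloneqq (g^{\langle n \rangle})^\dagger$, $\gamma_n^* \coloneqq v_M(g^{\langle n \rangle})$: the hypotheses on $g$ and gap~$0$ of $M$ give all of the above with $g$ replacing $f$, so $K\langle g \rangle = K(g^{\langle n \rangle} : n)$ with $(g^{\langle n \rangle})$ algebraically independent over $K$, value group $\Gamma \oplus \bigoplus_n \Z \gamma_n^*$, each $g^{\langle n \rangle} > 0$. Let $\sigma \colon K\langle f \rangle \to K\langle g \rangle$ be the field isomorphism over $K$ with $f^{\langle n \rangle} \mapsto g^{\langle n \rangle}$, and $j \colon \Gamma \oplus \bigoplus_n \Z \gamma_n \to \Gamma \oplus \bigoplus_n \Z \gamma_n^*$ the group isomorphism over $\Gamma$ with $\gamma_n \mapsto \gamma_n^*$. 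Since $\gamma_n, \gamma_n^* < 0$, the classes $[\gamma_n]$ and $[\gamma_n^*]$ are strictly decreasing and below every nonzero class of $\Gamma$, the sign of $\delta + \sum_n k_n \gamma_n$ equals that of $\delta$ if $\delta \neq 0$ and equals $-\operatorname{sign}(k_N)$ otherwise ($N$ least with $k_N \neq 0$), and likewise on the $g$-side; hence $j$ is an ordered group isomorphism, and it commutes with $\psi$ because $\psi_L(\gamma_n) = \gamma_{n+1}$, $\psi_M(\gamma_n^*) = \gamma_{n+1}^*$, both restrict to $\psi$ on $\Gamma$, and $\psi_L, \psi_M$ are constant on archimedean classes. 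From the gaussian-type descriptions of $v_L$ and $v_M$ it follows that $v_M(\sigma(x)) = j(v_L(x))$ for all $x \in K\langle f \rangle$, so $\sigma$ embeds $K\langle f \rangle$ into $M$ as a valued field over $K$.

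It remains to check that $\sigma$ respects the derivation and the ordering. For the derivation, $\sigma \circ \der$ and $\der \circ \sigma$ are $\sigma$-derivations $K\langle f \rangle \to K\langle g \rangle$ that agree on $K$ (where $\sigma$ is the identity) and on each generator, since $\sigma((f^{\langle n \rangle})') = \sigma(f^{\langle n+1 \rangle} f^{\langle n \rangle}) = g^{\langle n+1 \rangle} g^{\langle n \rangle} = (g^{\langle n \rangle})' = (\sigma(f^{\langle n \rangle}))'$; hence they coincide. For the ordering, by convexity of $\ca O_{K\langle f \rangle}$ each nonzero element of $K\langle f \rangle$ is $\sim$ the coefficient times the least-valuation monomial in its gaussian-type expansion, and since those monomials in the $f^{\langle n \rangle}$ — and correspondingly in the $g^{\langle n \rangle}$ — are positive and $j$ is order-preserving, $\sigma$ carries this dominant term to the dominant term of the image, with the same sign; so $\sigma$ preserves the ordering. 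Thus $\sigma \colon K\langle f \rangle \to M$ is an ordered valued differential field embedding over $K$ with $f \mapsto g$. The main work is the first paragraph — identifying $K\langle f \rangle$ with $K(f^{\langle n \rangle} : n)$ and pinning down its value group and gaussian-type valuation — which parallels Lemmas~\ref{adh16.1.1} and~\ref{adh16.1.2} but must be redone here since $f$ itself (not a pseudolimit) is the generator and the valuations $\gamma_n$ lie outside $\Gamma$ in the specific way forced by the gap~$0$ structure.
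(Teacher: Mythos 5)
The proposal is correct and takes essentially the same approach as the paper: set $f_{n+1} = f_n^\dagger$, observe by \cite[Lemma~9.2.10(iv)]{adamtt} that the valuations $v_L(f_n)$ form a $\Q$-linearly independent family with archimedean classes below all of $[\Gamma^{\neq}]$, explicitly describe $K\langle f\rangle$ and its valuation via monomials in the $f_n$, carry out the identical construction for $g$ in $M$, and match the two. The only cosmetic difference is that the paper invokes \cite[Lemma~10.5.2(i)]{adamtt} for the positivity of the $f_n$ and $g_n$, whereas you rederive it inductively from (PH3); both are sound.
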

\begin{proof}
Set $f_0 \coloneqq f$ and $f_{n+1} \coloneqq f_n^\dagger$, and let $\beta_n \coloneqq v_L(f_n) \in \Gamma_L$.
Then
\[
[\Gamma^{\neq}] > [\beta_0]>[\beta_1]>[\beta_2]>\cdots>[0].
\]
In particular, $[\beta_n] \notin [\Gamma]$ for all $n$ and $\beta_0, \beta_1, \beta_2, \dots$ are $\Q$-linearly independent over $\Gamma$.
Hence the vector space $\Q\Gamma_{K\langle f \rangle}/\Gamma$ is infinite dimensional, so $f$ is $\d$-transcendental over $K$.
Since $f=f_0$ and $f_n' = f_n f_{n+1}$, induction yields $F_n \in K[Y_0, \dots, Y_n]$ of (total) degree at most $n+1$ with $f^{(n)}=F_n(f_0, \dots, f_n)$.
Let $P \in K\{Y\}^{\neq}$ of order at most $r$.
Then $P(f)=\sum_{\bm i \in I} a_{\bm i} f_0^{i_0} \dots f_r^{i_r}$, where $I$ is a nonempty finite set of indices $\bm i=(i_0, \dots, i_r) \in \N^{1+r}$.
In particular, $\Gamma_{K\langle f \rangle} = \Gamma \oplus \bigoplus_{n} \Z\beta_n$.

Set $g_0 \coloneqq g$, $g_{n+1} \coloneqq g_n^\dagger$, and $\beta_n^* \coloneqq v_M(g_n) \in \Gamma_M$.
The same argument yields that $g$ is $\d$-transcendental over $K$ and $P(g)=\sum_{\bm i \in I} a_{\bm i} g_0^{i_0} \dots g_r^{i_r}$, where $I$ is the same set of indices as in $P(f)$ and $a_{\bm i}$ are the same coefficients.
Hence $\Gamma_{K\langle g \rangle} = \Gamma \oplus \bigoplus_{n} \Z\beta_n^*$.
Thus we have an isomorphism of ordered abelian groups $j \colon \Gamma_{K\langle f \rangle} \to \Gamma_{K\langle g \rangle}$ with $\beta_n \mapsto \beta_n^*$.
By the expressions for $P(f)$ and $P(g)$, we have $j\big(v_L(P(f))\big) = v_M(P(g))$, which yields a valued differential field embedding from $K\langle f \rangle \to M$ over $K$ with $f \mapsto g$.
To see that this is an ordered valued differential field embedding, note that $f_n>0$ and $g_n>0$ for all $n$, so $P(f)>0 \iff P(g)>0$.
\end{proof}

\section{Extending the constant field}\label{sec:rcc}
The results of this short section are used mainly to strengthen the statement of Theorem~\ref{rcexpintclosure} in the next section and play no role in the main results of \S\ref{sec:main}.
The term ``residue constant closed'' defined in the next paragraph, however, is used in some key lemmas of \S\ref{sec:dhl}.
\begin{ass}In this section, $K$ is asymptotic with small derivation.\end{ass}
Since $C \subseteq \ca O$, $C$ maps injectively into $\bm k$ under the residue field map, and hence into $C_{\bm k}$.
We say that $K$ is \deft{residue constant closed} if  $K$ is henselian and $C$ maps onto $C_{\bm k}$, that is, $\res(C)=C_{\bm k}$.
We say that $L$ is a \deft{residue constant closure} of $K$ if it is a residue constant closed $H$-asymptotic extension of $K$ with small derivation that embeds uniquely over $K$ into every residue constant closed $H$-asymptotic extension $M$ of $K$ with small derivation.
Note that if $K$ has a residue constant closure, then it is unique up to unique isomorphism over $K$.
\begin{prop}\label{rcclosure}
Suppose that $K$ is pre-$\d$-valued of $H$-type with $\sup\Psi=0$. Then $K$ has a residue constant closure that is an immediate extension of $K$.
\end{prop}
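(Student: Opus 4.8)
The plan is to construct $L$ by transfinitely iterating two kinds of immediate extensions: passing to a henselization, and adjoining a single constant whose residue is a prescribed element of $C_{\bm k}$ not yet lying in $\res(C)$; since all these extensions are immediate, so is $L$. First I would reduce to the case that $K$ is henselian, by replacing $K$ with its henselization: this is an immediate algebraic extension, hence still pre-$\d$-valued of $H$-type with $\sup\Psi=0$ (these depend only on the valued differential field structure, and the asymptotic couple is unchanged since the extension is immediate), and it embeds uniquely over $K$ into every henselian extension of $K$.

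The heart of the proof is the following claim: \emph{if $K$ is henselian, pre-$\d$-valued of $H$-type with $\sup\Psi=0$, and $\bar c\in C_{\bm k}\setminus\res(C)$, then $K$ has an immediate pre-$\d$-valued of $H$-type extension containing a constant $c$ with $\res(c)=\bar c$, and this extension embeds over $K$ into every residue constant closed $H$-asymptotic extension $M$ of $K$ with small derivation, via $c\mapsto$ the unique element of $C_M$ lifting $\bar c$.} To prove it, fix a lift $a\in\ca O$ with $\res(a)=\bar c$; then $a\notin C$ (else $\bar c\in\res(C)$), so $b:=\der(a)\in\cao$ is nonzero, and there is no $y\in\cao$ with $\der(y)=b$ (else $a-y\in C$ lifts $\bar c$). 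Since $\sup\Psi=0$ gives $(\Gamma^>)'=\Gamma^>$ and $v(b)>0$, there is $\gamma\in\Gamma^>$ with $\gamma'=v(b)$. Using this together with residue computations in $\bm k$, I would build by transfinite recursion a pseudocauchy sequence $(\epsilon_\rho)$ in $K$ with $v(\epsilon_\rho)=\gamma$ throughout and with $(\der(\epsilon_\rho)-b)$ a pseudocauchy sequence that pseudoconverges to $0$, the recursion continuing as long as $\der(\epsilon_\rho)\ne b$; since $\der(\epsilon_\rho)=b$ together with $v(\epsilon_\rho)=\gamma>0$ would contradict the hypothesis on $b$, and since the values $v(\epsilon_{\rho+1}-\epsilon_\rho)$ strictly increase in $\Gamma$, the recursion must break off at a pseudocauchy sequence with no pseudolimit in $K$. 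Being divergent it is, by henselianity, not of algebraic type, hence is of $\d$-algebraic type, and $Y'-b$ is a differential polynomial of minimal complexity sending it to a pseudocauchy sequence with pseudolimit $0$; the theory of immediate extensions along pseudocauchy sequences of $\d$-algebraic type \cite{adamtt} then yields an immediate extension $K\langle\epsilon\rangle$, pre-$\d$-valued of $H$-type by immediacy over $K$ \cite[Lemma~10.1.9]{adamtt}, with $(\epsilon_\rho)\pconv\epsilon$ and $\der(\epsilon)=b$. As $v(\epsilon)=\gamma>0$ we have $\epsilon\prec 1$, so $c:=a-\epsilon\in C$ with $\res(c)=\bar c$. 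For the embedding: given such an $M$, from $\bar c\in C_{\bm k}\subseteq C_{\bm k_M}=\res(C_M)$ we get $c_M\in C_M$ with $\res(c_M)=\bar c$, unique because $M$ is asymptotic and so $C_M\cap\cao_M=\{0\}$; then $\epsilon_M:=a-c_M$ is a primitive of $b$ with $\epsilon_M\prec 1$, one checks $(\epsilon_\rho)\pconv\epsilon_M$, and the uniqueness part of the pseudocauchy-sequence theory gives the embedding $K\langle\epsilon\rangle\to M$ over $K$ with $\epsilon\mapsto\epsilon_M$, i.e.\ $c\mapsto c_M$.

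Granting the claim, enumerate $C_{\bm k}$ and, starting from the henselization of $K$, iterate: at each successor stage apply the claim to the next element of $C_{\bm k}$ not yet in $\res(C)$ and then pass to a henselization, and at limit stages take the henselization of the union. Every step is immediate, so after $|C_{\bm k}|$ stages we obtain $L$ with $\res(C_L)\supseteq C_{\bm k}$, whence $\res(C_L)=C_{\bm k_L}$ since $L$ is immediate over $K$; as $L$ is also henselian, it is residue constant closed and immediate over $K$. The semi-universal property follows stage by stage, using the universal property of the henselization for the henselization steps and the embedding statement in the claim for the constant-adjunction steps, and the overall embedding is unique because at every stage the embedding is uniquely determined. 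The main difficulty is the claim, and within it the analysis of the immediate extension $K\langle\epsilon\rangle$: verifying that the pseudocauchy sequence is divergent (this is exactly where $\bar c\notin\res(C)$ is used), that it is of $\d$-algebraic rather than algebraic type with $Y'-b$ of minimal complexity (where henselianity of $K$ is used), and that the resulting extension remains pre-$\d$-valued of $H$-type with $\sup\Psi=0$.
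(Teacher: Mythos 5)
Your overall plan — henselize, then adjoin a constant with prescribed residue, iterate, and use immediacy at every stage — is exactly the paper's strategy, and the final tower/universal-property bookkeeping matches. Where you diverge is in how a single constant is adjoined. The paper takes $u\in\ca O$ with $u\asymp 1$, $u'\prec 1$, $u'\notin\der\cao$, forms the transcendental extension $K(y)$ with $y'=u'$, and applies \cite[Lemma~10.2.4]{adamtt} (together with \cite[Lemma~10.2.5(iii)]{adamtt}, which gives that $\{v(u'-a'):a\in\cao\}$ has no maximum) to put the unique immediate $H$-asymptotic valuation on $K(y)$ with $y\prec 1$; then $u-y$ is the new constant. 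You instead construct a pc-sequence $(\epsilon_\rho)$ in $\cao$ whose derivatives pseudoconverge to $b:=\der(a)$ and then invoke the theory of immediate extensions along pc-sequences of $\d$-algebraic type from \cite[\S6.9]{adamtt}. These are two implementations of the same underlying idea: your recursion is, in effect, re-deriving \cite[Lemma~10.2.5(iii)]{adamtt} from $(\Gamma^>)'=\Gamma^>$ and residue computations, while the paper quotes that lemma and then lets \cite[Lemma~10.2.4]{adamtt} do the valuation-theoretic work (including the uniqueness that drives the embedding property). Your route requires heavier machinery and more verification; the paper's is more lightweight. Both work.

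Two points you should tighten. First, ``Being divergent it is, by henselianity, not of algebraic type, hence is of $\d$-algebraic type'' is a non sequitur as written: that $(\epsilon_\rho)$ is of $\d$-algebraic type comes from the explicit witness $Y'-b$; henselianity (together with equal characteristic $0$, so henselian $\Rightarrow$ algebraically maximal) is what rules out a lower-complexity order-$0$ witness, so that $Y'-b$ is indeed of minimal complexity. Second, the theory of pc-sequences produces a valued differential field extension $K\langle\epsilon\rangle$, but you still need to check that this extension is asymptotic before you can conclude it is pre-$\d$-valued of $H$-type; the reference you give, \cite[Lemma~10.1.9]{adamtt}, concerns extensions with the same value group constructed over the residue field, not arbitrary immediate extensions. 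The correct justification is that an immediate asymptotic extension of a pre-$\d$-valued field of $H$-type is again such (as the paper notes, via \cite[Corollary~10.1.17]{adamtt}), but asymptoticity of $K\langle\epsilon\rangle$ itself needs its own citation (e.g.\ the appropriate result in \cite[\S9.4]{adamtt}). These are fixable, and your proposal otherwise captures the substance of the proof.
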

\begin{proof}
Recall from \S\ref{sec:ac-small} that $\sup\Psi=0 \iff (\Gamma^>)'=\Gamma^>$.
Also note that if $L$ is an immediate asymptotic extension of $K$, then it is $H$-asymptotic, satisfies $\Psi_L=\Psi$ (in particular, $\sup\Psi_L=0$), and is pre-$\d$-valued by \cite[Corollary~10.1.17]{adamtt}.

Build a tower of immediate asymptotic extensions of $K$ as follows.
Set $K_0 \coloneqq K$.
If $K_\lambda$ is not henselian, set $K_{\lambda+1} \coloneqq K_\lambda^{\h}$, the henselization of $K_\lambda$, which as an algebraic extension of $K_\lambda$ is asymptotic \cite[Proposition~9.5.3]{adamtt}.
If $K_\lambda$ is residue constant closed, we are done, so suppose that $K_\lambda$ is henselian but not residue constant closed.
Then we have $u \in K_\lambda$ with $u \asymp 1$, $u'\prec 1$, and $u' \notin \der\cao_{K_\lambda}$.
Let $y$ be transcendental over $K_\lambda$ and equip $K_{\lambda+1} \coloneqq K_\lambda(y)$ with the unique derivation extending that of $K_\lambda$ such that $y'=u'$.
Since $\sup\Psi_{K_{\lambda}}=0$, we have $v(u') \in (\Gamma^>)'$, so the set $\{ v(u'-a') : a \in \cao_{K_\lambda} \}$ has no maximum \cite[Lemma~10.2.5(iii)]{adamtt}.
Thus by \cite[Lemma~10.2.4]{adamtt} we can equip $K_{\lambda+1}$ with the unique valuation making it an $H$-asymptotic extension of $K_\lambda$ with $y \not\asymp 1$; with this valuation, $y \prec 1$ and $K_{\lambda+1}$ is an immediate extension of $K_{\lambda}$.
If $\lambda$ is a limit ordinal, set $K_\lambda \coloneqq \bigcup_{\rho<\lambda} K_\rho$.
Since each extension is immediate, by Zorn's lemma we may take a maximal such tower $(K_\lambda)_{\lambda\les\mu}$.

It is clear that $K_\mu$ is residue constant closed, and we show that it also has the desired universal property.
Let $M$ be an $H$-asymptotic extension of $K$ with small derivation that is residue constant closed, and let $\lambda<\mu$ and $i \colon K_{\lambda} \to M$ be an embedding.
It suffices by induction to extend $i$ uniquely to an embedding $K_{\lambda+1} \to M$.
If $K_{\lambda+1}=K_{\lambda}^{\h}$, then this follows from the universal property of the henselization.
Now suppose that $K_{\lambda+1} = K_\lambda(y)$ with $y$ and $u$ as above.
Take the unique $c \in C_M$ with $c \sim i(u)$ and set $z \coloneqq i(u)-c$.
Then $z'=i(u)'$ and $z \prec 1$, so by the remarks after \cite[Lemma~10.2.4]{adamtt}, $z$ is transcendental over $i(K_\lambda)$, and thus mapping $y \mapsto z$ yields a differential field embedding $K_{\lambda+1} \to M$ extending $i$.
By the uniqueness of \cite[Lemma~10.2.4]{adamtt}, this is a valued differential field embedding.
Finally, if $i$ extends to an embedding with $y \mapsto z_1 \in M$, then $i(u)-z_1 \in C_M$ and $i(u)-z_1 \sim i(u) \sim c$, so $z_1=z$.
\end{proof}
Note that if $K$ is a pre-$H$-field with $\sup\Psi=0$, then as an immediate extension of $K$ any residue constant closure of $K$ embeds uniquely, as an \emph{ordered} valued differential field, over $K$ into every residue constant closed pre-$H$-field extension of $K$ with small derivation.

The following result is worth recording but not needed later.
In it, we construe an algebraic extension of a residue constant closed $K$ as a valued differential field extension of $K$ by equipping it with the unique derivation and valuation extending those of $K$ (uniqueness of the valuation follows from the henselianity of $K$); if $K$ is additionally an ordered field, then by henselianity $\ca O$ is necessarily convex.
\begin{lem}\label{rcclosedalg}
Suppose that $K$ is residue constant closed.
An algebraic extension $L$ of $K$ remains residue constant closed.
\end{lem}
\begin{proof}
First, recall that an algebraic extension of a henselian valued field is henselian, and note that $L$ is asymptotic \cite[Proposition~9.5.3]{adamtt} and has small derivation \cite[Proposition~6.2.1]{adamtt}.
Let $u \in L$ with $u \asymp 1$ and $u' \prec 1$; we need to show that there is $c \in C_{L}$ with $c \sim u$.
Since $\overline{u} \in C_{\res(L)}$ is algebraic over $\res(K)$, it is algebraic over $C_{\res(K)}$.
Take a $P \in C[X]$ such that $\overline{P} \in C_{\res(K)}[X]$ is the minimum polynomial of $\overline{u}$ over $C_{\res(K)}$.
Then $P = Q\cdot\prod_{i=1}^n (X-c_i)$, where $c_i \in C_{L}$ for $i=1,\dots,n$ and $Q \in C_{L}[X]$ has no roots in $C_{L}$.
Moreover, $Q$ has no roots in $L$, so henselianity yields $i$ with $1 \les i \les n$ and $\overline{c_i}=\overline{u}$.
\end{proof}

\section{Differential-Hensel-Liouville closures}\label{sec:dhl}
\begin{ass}In this section, $K$ is a pre-$H$-field.\end{ass}
In this section we construct differential-Hensel-Liouville closures (Theorem~\ref{dhlclosure}) in analogy with the Newton-Liouville closures of \cite[\S14.5]{adamtt} and prove that they are unique (Corollary~\ref{dhlclosuremin}).
To do this, we first construct extensions that are real closed, have exponential integration, and satisfy an embedding property (Corollary~\ref{expintclosure}, Lemma~\ref{expintclosureembed}), in analogy with the Liouville closures of \cite{ad-hf}; however, we use \cite[\S10.6]{adamtt} as a reference and also adapt some preliminaries from \cite[\S10.4--10.6]{adamtt}.
Combining this with the residue constant closures from the previous section, we record in Theorem~\ref{rcexpintclosure} an improvement that is not needed later.

Suppose momentarily that $K$ has gap~$0$.
In several results below, it is assumed that $\bm k$ has exponential integration.
It is worth noting that if $K$ has exponential integration, then so does $\bm k$.
Hence, the assumption that $\bm k$ has exponential integration is necessary for $K$ to have an extension with exponential integration and the same ordered differential residue field.

\subsection{Adjoining exponential integrals}\label{sec:dhl:adexpint}
Suppose that $s \in K \setminus (K^{\x})^\dagger$ and $f$ is transcendental over $K$.
We give $K(f)$ the unique derivation extending that of $K$ with $f^\dagger = s$.
In the first lemma, $K$ need only be an ordered differential field.

\begin{lem}\label{rcsameconstants}
If $K$ is real closed and $K(f)$ can be ordered making it an ordered field extension of $K$, then $C_{K(f)} = C$.
\end{lem}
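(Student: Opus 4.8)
The plan is to compute directly in the polynomial ring $K[f]$, using that the extended derivation sends $g = \sum_i a_i f^i$ to $g' = \sum_i (a_i' + i s a_i) f^i$ and hence never raises $f$-degree. The crucial input is that, since $K$ is real closed, $(K^{\x})^\dagger$ is a $\Q$-linear subspace of $(K, +)$: it is a subgroup because $(ab)^\dagger = a^\dagger + b^\dagger$ and $(a^{-1})^\dagger = -a^\dagger$, and it is divisible because every positive element of $K$ has an $n$-th root in $K$ while $(-a)^\dagger = a^\dagger$. In particular, if $ns \in (K^{\x})^\dagger$ for some $n \ges 1$, then $s \in (K^{\x})^\dagger$, which is excluded by hypothesis.

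From this I would first record that for nonzero $g \in K[f]$ of $f$-degree $n \ges 1$, the derivative $g'$ again has $f$-degree exactly $n$: its leading coefficient is $(\operatorname{lc} g)' + n s\, \operatorname{lc} g$, and were this to vanish we would get $(\operatorname{lc} g)^\dagger = -ns \in (K^{\x})^\dagger$, contradicting the previous paragraph. Next, given $h \in C_{K(f)}$ with $h \neq 0$, write $h = \tilde P/\tilde Q$ with $\tilde P, \tilde Q \in K[f]$ nonzero and coprime. Then $h' = 0$ gives $\tilde P'\tilde Q = \tilde P\tilde Q'$, so $\tilde P \mid \tilde P'\tilde Q$ and, by coprimality, $\tilde P \mid \tilde P'$; since $\deg_f \tilde P' \les \deg_f \tilde P$, this forces $\tilde P' = c\tilde P$ for some $c \in K$, and cancelling $\tilde P$ in $\tilde P'\tilde Q = \tilde P\tilde Q'$ gives $\tilde Q' = c\tilde Q$ with the same $c$.

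To finish, note that by coprimality $f$ cannot divide both $\tilde P$ and $\tilde Q$, so, say, $\tilde Q(0) \neq 0$ (the case $\tilde P(0) \neq 0$ being symmetric); comparing constant terms in $\tilde Q' = c\tilde Q$ then gives $c = \tilde Q(0)^\dagger \in (K^{\x})^\dagger$. If $\deg_f \tilde P = n \ges 1$, comparing the coefficients of $f^n$ in $\tilde P' = c\tilde P$ and dividing by $\operatorname{lc}\tilde P$ yields $(\operatorname{lc}\tilde P)^\dagger + ns = c$, hence $ns = c - (\operatorname{lc}\tilde P)^\dagger \in (K^{\x})^\dagger$ and so $s \in (K^{\x})^\dagger$, a contradiction; thus $\tilde P \in K$, and the identical argument applied to $\tilde Q$ gives $\tilde Q \in K$. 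Therefore $h = \tilde P/\tilde Q \in K$, so $h \in K \cap C_{K(f)} = C$; the reverse inclusion $C \subseteq C_{K(f)}$ is immediate.

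The only genuine obstacle is the first step --- establishing that the derivation on $K[f]$ preserves $f$-degree exactly --- which is precisely where $s \notin (K^{\x})^\dagger$ together with the real-closedness of $K$ (divisibility of $(K^{\x})^\dagger$) are used; once that is in hand, the remainder is routine coprime numerator--denominator bookkeeping. In particular, the orderability hypothesis on $K(f)$ is not used in this approach.
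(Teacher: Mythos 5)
Your argument is correct, and the computation is essentially the one that underlies the cited sources. The paper's proof simply delegates to Lemma~4.6.11 and Corollary~4.6.12 of \cite{adamtt}: the first of these says, for $f$ transcendental over $K$ with $f^\dagger = s \in K$, that $C_{K(f)} = C$ once one knows $ns \notin (K^\x)^\dagger$ for all $n \ges 1$, and the second deduces this from $s \notin (K^\x)^\dagger$ when $(K^\x)^\dagger$ is divisible, which holds because $K$ is real closed. Your two ingredients match exactly: the divisibility of $(K^\x)^\dagger$ (your first paragraph) plays the role of Corollary~4.6.12, and the coprime numerator--denominator bookkeeping in $K[f]$ that forces $ns \in (K^\x)^\dagger$ from a new constant is the content of Lemma~4.6.11. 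So this is the same proof, just unpacked.

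Your parenthetical observation that the orderability of $K(f)$ is never invoked is also right: the argument only uses that $K$ is real closed (hence $(K^\x)^\dagger$ is $\Q$-divisible), that $s \notin (K^\x)^\dagger$, and that $f$ is transcendental. The orderability hypothesis appears in the lemma because that is the form in which the result is needed (in Lemma~\ref{adh10.6.13o} each $K_{\lambda+1}$ is an ordered field extension by construction) and because it matches the phrasing of the cited \cite{adamtt} statements; it is not logically necessary for the conclusion.

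One tiny point of economy worth noting: your ``first step'' (that $\der$ preserves $f$-degree exactly) is not actually needed to conclude $\tilde P' = c\tilde P$. From $\tilde P \mid \tilde P'$ and $\deg_f \tilde P' \les \deg_f \tilde P$ one already gets $\tilde P' = c\tilde P$ for some $c \in K$ (with $c = 0$ allowed if $\tilde P' = 0$), without knowing in advance that the degree is preserved. The divisibility of $(K^\x)^\dagger$ is then used only once, at the very end, to rule out $ns \in (K^\x)^\dagger$.
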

\begin{proof}
Suppose towards a contradiction that we have $y \in K^{\x}$ and $m \ges 1$ with $y^{\dagger} = ms$.
Then by taking a $z \in K^{\x}$ with $z^m=y$, after arranging $y>0$ if necessary, we have $z^{\dagger}=s$, a contradiction.
Thus \cite[Lemma~4.6.11]{adamtt} (with $R=K$, $r=s$, and $x=f$) and \cite[Corollary~4.6.12]{adamtt} (with $R=K[f, f^{-1}]$) yield that $C_{K(f)}$ is algebraic over $C$, so $C_{K(f)} = C$.
\end{proof}

In the next two lemmas, $K$ is just a valued differential field, and need not be ordered.
The first is based on \cite[Lemma~10.4.2]{adamtt}.
\begin{lem}\label{adh10.4.2}
Suppose that $K$ has small derivation and $\bm k = (\bm k^{\x})^\dagger$.
Let $K(f)$ have a valuation that makes it an extension of $K$ with $\Gamma_{K(f)} = \Gamma$ and $\der\ca O_{K(f)} \subseteq \ca O_{K(f)}$.
Then $s-a^\dagger \prec 1$ for some $a \in K^{\x}$.
\end{lem}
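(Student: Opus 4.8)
The plan is to argue by contradiction: suppose that $s - a^\dagger \asymp 1$ for every $a \in K^\times$, i.e.\ that $f^\dagger = s$ stays ``far'' (valuation $0$) from every logarithmic derivative $a^\dagger$ with $a \in K^\times$. I want to extract from this a contradiction with the hypothesis $\der\ca O_{K(f)} \subseteq \ca O_{K(f)}$, using the exponential-integration hypothesis $\bm k = (\bm k^\times)^\dagger$ and $\Gamma_{K(f)} = \Gamma$. The element to examine is $f$ itself, together with $f' = sf$. Since $\Gamma_{K(f)} = \Gamma$, there is $b \in K^\times$ with $f \asymp b$ in $K(f)$; replacing $f$ by $f/b$ (which is legitimate because $(f/b)^\dagger = s - b^\dagger$, and by assumption $s - b^\dagger \asymp 1$, so after this reduction $s$ is replaced by an $\asymp 1$ element, which is exactly the kind of $s$ we must rule out), I may assume $f \asymp 1$, hence $\bar f \in \bm k_{K(f)}$ is a nonzero element of the residue field.

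Now use $\der\ca O_{K(f)} \subseteq \ca O_{K(f)}$: since $f \asymp 1$ we get $f' \prece 1$, so $f' = sf \prece 1$; as $f \asymp 1$ this forces $s \prece 1$, and in fact since $s \asymp 1$ (our standing assumption) we have $s \asymp 1$, so $\bar s \in \bm k$ is a nonzero element. Then in the residue field $\bm k_{K(f)}$ we compute $\overline{f'} = \bar s\, \bar f$, i.e.\ $\bar f^\dagger = \bar s$ in $\bm k_{K(f)}$ — but here I must be careful about whether $\bar f \in \bm k$ or $\bm k_{K(f)}$ is a proper extension. The key point: since $\Gamma_{K(f)} = \Gamma$, the extension $\bm k \subseteq \bm k_{K(f)}$ carries all of the ``new'' information, and $\bar f$ generates it. Now invoke $\bm k = (\bm k^\times)^\dagger$: there is $u \in K^\times$ with $u \asymp 1$ and $u^\dagger = s + \varepsilon$ for some $\varepsilon \prec 1$ — wait, that is precisely what we are trying to prove. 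So instead I should use exponential integration in $\bm k$ directly at the residue level: pick $v \in \bm k^\times$ with $v^\dagger = \bar s$ in $\bm k$. Lifting $v$ to some $w \in \ca O_K^\times$ with $\bar w = v$, we get $w^\dagger = s + \delta$ where $\overline{w^\dagger} = \bar s$, hence $\delta = w^\dagger - s \prec 1$. But $w \in K^\times$, contradicting our assumption that $s - a^\dagger \asymp 1$ for all $a \in K^\times$ (take $a = w$).

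The step I expect to be the main obstacle is the careful bookkeeping around the residue field extension $\bm k \subseteq \bm k_{K(f)}$ and justifying that $\overline{w^\dagger} = \bar s$ in $\bm k$ — this uses that $\der$ has small derivation and hence induces a derivation on $\bm k$ compatible with the one on $\bm k_{K(f)}$, and that the residue map commutes with $\der$ on $\ca O_K$. One subtlety is ensuring the initial reduction $f \rightsquigarrow f/b$ is sound: the reduction replaces the pair $(s,f)$ by $(s - b^\dagger, f/b)$, and the claim to be proved ($s - a^\dagger \prec 1$ for some $a \in K^\times$) is invariant under $s \mapsto s - b^\dagger$ (just translate $a \mapsto ab$), so no generality is lost. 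I would also want to double-check that $f \asymp 1$ can indeed be arranged, which needs exactly $\Gamma_{K(f)} = \Gamma$ (so that $vf \in \Gamma$ and some $b \in K^\times$ matches it). Once these points are pinned down, the contradiction falls out immediately from $\bm k = (\bm k^\times)^\dagger$, as sketched. Alternatively, and perhaps more cleanly, one can cite \cite[Lemma~10.4.2]{adamtt} and adapt its proof verbatim, replacing the role of ``$K$ henselian'' there with the weaker ``$\der\ca O_{K(f)} \subseteq \ca O_{K(f)}$'' hypothesis here; I expect the original argument goes through with only cosmetic changes.
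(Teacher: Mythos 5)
Your argument is correct and is essentially the paper's proof, just cast as a contradiction rather than a direct construction: use $\Gamma_{K(f)}=\Gamma$ to arrange $f\asymp 1$ (hence $s-b^\dagger=(f/b)^\dagger\prece 1$ by $\der\ca O_{K(f)}\subseteq\ca O_{K(f)}$), then use $\bm k=(\bm k^{\x})^\dagger$ to absorb the remaining $\asymp 1$ possibility by a further logarithmic derivative $u^\dagger$ with $u\asymp 1$. The one small slip is that the negation of the conclusion is ``$s-a^\dagger\succe 1$ for all $a\in K^{\x}$'' rather than ``$\asymp 1$,'' but this is harmless since your small-derivation step independently gives $\prece 1$, so the two coincide after the reduction.
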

\begin{proof}
Since $vf \in \Gamma$, there is $b \in K^{\x}$ with $g \coloneqq f/b \asymp 1$.
Then $s-b^\dagger = g^\dagger \asymp g' \prece 1$.
If $s-b^\dagger \prec 1$, set $a \coloneqq b$.
If $s-b^\dagger \asymp 1$, since $\bm k = (\bm k^{\x})^\dagger$, we have $u \asymp 1$ in $K^{\x}$ with $s-b^\dagger \sim u^\dagger$.
Then set $a \coloneqq bu$.
\end{proof}

The last part of the previous argument also yields the following useful fact.
\begin{lem}\label{10.4.2cor}
Suppose that $K$ has small derivation and $\bm k = (\bm k^{\x})^\dagger$.
If $s-a^\dagger \succe 1$ for all $a \in K^{\x}$, then $s-a^\dagger \succ 1$ for all $a \in K^{\x}$.
\end{lem}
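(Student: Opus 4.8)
The plan is to reduce the statement to a single assertion about residues and then invoke the computation from the end of the proof of Lemma~\ref{adh10.4.2}. Granting the hypothesis, suppose the conclusion failed: then some $a \in K^{\x}$ satisfies $s - a^\dagger \not\succ 1$, and combined with $s - a^\dagger \succe 1$ this forces $s - a^\dagger \asymp 1$. So it suffices to prove the implication: if $s - a^\dagger \asymp 1$ for some $a \in K^{\x}$, then there is $b \in K^{\x}$ with $s - b^\dagger \prec 1$. This contradicts the hypothesis and hence establishes the lemma.

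To prove that implication, assume $a \in K^{\x}$ with $s - a^\dagger \asymp 1$. Small derivation gives $\der\ca O \subseteq \ca O$ and a well-defined induced derivation on $\bm k$; in particular $s - a^\dagger \in \ca O$ and its residue $c \coloneqq \res(s - a^\dagger)$ lies in $\bm k^{\x}$. Using $\bm k = (\bm k^{\x})^\dagger$, pick $\overline u \in \bm k^{\x}$ with $\overline u^\dagger = c$ and lift it to $u \in \ca O$ with $\res(u) = \overline u$, so $u \asymp 1$. Since $u' \in \ca O$ and $u$ is a unit of $\ca O$, we have $u^\dagger = u'/u \in \ca O$ with $\res(u^\dagger) = \res(u)^\dagger = \overline u^\dagger = c$; therefore $(s - a^\dagger) - u^\dagger \prec 1$. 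Taking $b \coloneqq au$ gives $b^\dagger = a^\dagger + u^\dagger$, so $s - b^\dagger = (s - a^\dagger) - u^\dagger \prec 1$, as needed. This is exactly the manipulation appearing in the final paragraph of the proof of Lemma~\ref{adh10.4.2}, only now run in isolation rather than as one case of that argument.

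There is no substantive obstacle here; the sole point demanding care is the bookkeeping with the residue map — specifically, that small derivation is exactly what guarantees both $u^\dagger \in \ca O$ and $\res(u^\dagger) = \res(u)^\dagger$, so that the logarithmic derivative of the lift $u$ has the prescribed residue $c$.
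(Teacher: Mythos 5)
Your proof is correct and matches the paper's intent exactly: the paper's Lemma~\ref{10.4.2cor} is stated with the remark ``The last part of the argument also yields the following useful fact,'' referring to the final two sentences of the proof of Lemma~\ref{adh10.4.2}, and you have simply written out that sub-argument in full (assume $s-a^\dagger \asymp 1$, lift an element of $\bm k^{\x}$ with dagger equal to $\res(s-a^\dagger)$, and multiply). Your explanation of why small derivation makes the residue bookkeeping work is accurate and a helpful elaboration, though it was left implicit in the paper.
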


Now we return to the situation that $K$ is a pre-$H$-field.
\begin{lem}[{\cite[Lemma~10.5.18]{adamtt}}]\label{adh10.5.18}
Suppose that $K$ is henselian and $vs \in (\Gamma^>)'$.
Then there is a unique valuation on $K(f)$ making it an $H$-asymptotic extension of $K$ with $f \sim 1$.
With this valuation, $K(f)$ is an immediate extension of $K$, so there is a unique ordering of $K(f)$ making it a pre-$H$-field extension of $K$.
\end{lem}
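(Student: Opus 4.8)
The plan is to realise $f$, normalised so that $f \sim 1$, as a pseudolimit of a divergent pseudocauchy sequence from $K$; the valuation on $K(f)$, its uniqueness, and immediacy then follow from the theory of pc-sequences and immediate extensions, and \cite[Lemma~10.5.8]{adamtt} supplies the ordering. First I would build a pc-sequence $(a_\rho)$ in $K$ with $a_\rho \asymp 1$ and $\overline{a_\rho} = 1$. Using that $(\Gamma^>)' = \{ v(\delta') : \delta \in K^{\x},\ \delta \prec 1 \}$ together with $vs \in (\Gamma^>)'$, put $a_0 \coloneqq 1$ and, given $a_\rho$ with $v(s - a_\rho^\dagger) \in (\Gamma^>)'$, choose $\delta_\rho \in K^{\x}$ with $\delta_\rho \prec 1$ and $\delta_\rho' \sim s - a_\rho^\dagger$, and set $a_{\rho+1} \coloneqq a_\rho(1 + \delta_\rho)$; at a limit stage take a pseudolimit in $K$ of the sequence built so far if one exists, and otherwise stop. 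From $(1 + \delta_\rho)^\dagger = \delta_\rho'/(1+\delta_\rho)$ one gets $s - a_{\rho+1}^\dagger = (s - a_\rho^\dagger) - (1+\delta_\rho)^\dagger \prec s - a_\rho^\dagger$, so $v(s - a_\rho^\dagger)$ strictly increases and remains in $(\Gamma^>)'$; since $K$ is asymptotic and $\delta_\rho \prec 1$, it follows that $v(\delta_\rho) = v(a_{\rho+1} - a_\rho)$ strictly increases as well, so $(a_\rho)$ is a pc-sequence. The construction must halt by cardinality, and since a successor stage is always available it halts only at a limit stage at which the partial sequence has no pseudolimit in $K$; that sequence is then divergent in $K$. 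The standing hypothesis $s \notin (K^{\x})^\dagger$ is what makes this situation genuine, as it is why $K(f)$ is not a mere constant-field extension of $K$.

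Now suppose $K(f)$ carries a valuation extending that of $K$ and making it an asymptotic extension with $f \sim 1$. Writing $f = a_\rho(1 + \eta_\rho)$ one has $\eta_\rho' = (f/a_\rho)^\dagger \cdot (f/a_\rho) \sim s - a_\rho^\dagger \sim \delta_\rho'$ with $\eta_\rho, \delta_\rho \prec 1$, whence $\eta_\rho \sim \delta_\rho$ by asymptoticity, so $v(f - a_\rho) = v(\delta_\rho) = v(a_{\rho+1} - a_\rho)$ strictly increases and $f$ is a pseudolimit of $(a_\rho)$; hence $v(P(f)) = v(P(a_\rho))$ for all large $\rho$, and the valuation is uniquely determined and immediate over $K$. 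For existence, since $(a_\rho)$ is a divergent pc-sequence in $K$ and $f \notin K$, the theory of immediate extensions provides such a valuation on $K(f)$; it makes $K(f)$ an immediate — hence small-derivation — extension of $K$ with the same asymptotic couple, so an $H$-asymptotic extension of $K$, and the computation just given yields $v(f) = 0$ and $\overline{f} = 1$, so $f \sim 1$. Here the henselianity of $K$ is what pins down the extension: $(a_\rho)$ is of $\d$-algebraic type, pseudo-satisfying the order-one linear differential polynomial $Y' - sY$, and henselianity of $K$ ensures that the immediate extension it generates is exactly this $K(f)$. Finally, $K(f)$ is an immediate asymptotic extension of the pre-$H$-field $K$, so by \cite[Lemma~10.5.8]{adamtt} it carries a unique ordering making it a pre-$H$-field extension of $K$, which is the last sentence.

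The main obstacle is the first step, the construction and control of the pc-sequence. One must keep the approximation step available as $v(s - a_\rho^\dagger)$ climbs, that is, check that these values stay in $(\Gamma^>)'$; argue that the construction terminates in a divergent stage; and — the most delicate point — secure the normalisation $f \sim 1$ rather than merely $f \prec 1$ or $f \asymp 1$. This last is why the sequence is anchored at $a_0 = 1$ with successive ratios $1 + \delta_\rho \sim 1$, and it is precisely what the hypothesis $vs \in (\Gamma^>)'$, as opposed to $vs \in \Psi$ or $vs$ a gap, is providing.
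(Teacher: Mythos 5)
The paper does not prove this lemma; it imports it wholesale from \cite[Lemma~10.5.18]{adamtt} and only adds the final sentence about the ordering via \cite[Lemma~10.5.8]{adamtt}, so there is no in-paper argument to compare against. Judging your proposal on its own terms: building a pc-sequence $(a_\rho)$ in $K$ that any valuation with $f \sim 1$ forces $f$ to pseudo-converge to, and then reading the valuation off, is a reasonable plan, and your uniqueness computation is essentially sound. Two points of the construction should be made explicit, though. First, the successor step needs $\delta_\rho' \sim s - a_\rho^\dagger$, not merely $\delta_\rho' \asymp s - a_\rho^\dagger$, and the identity $(\Gamma^>)' = \{v(\delta') : \delta \prec 1\}$ that you cite only produces $\asymp$; the upgrade to $\sim$ uses pre-$\d$-valuedness of $K$ (from (PD2), for $\delta_1 \asymp \delta_2 \prec 1$ one has $\overline{\delta_1'/\delta_2'} = \overline{\delta_1/\delta_2}$, so the residue of $\delta'$ against a target of the right valuation can be prescribed). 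Second, what henselianity of $K$ actually buys you is that $K$ is algebraically maximal (residue characteristic $0$), so $(a_\rho)$ is of transcendental type over $K$ \emph{as a valued field}; that is what makes $v(P(a_\rho))$ stabilize for every $P \in K[Y]^{\neq}$, and it is this, rather than the role you describe, that pins down $v(P(f))$ on all of $K(f)$ in your uniqueness argument.

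The genuine gap is in the existence half. You write that the pc-sequence machinery "makes $K(f)$ an immediate --- hence small-derivation --- extension of $K$," but that implication is false: an immediate valued differential field extension of a field with small derivation need not itself have small derivation. The Kaplansky construction pins down the \emph{valuation} on $K(f)$ without ever consulting the derivation, and there is no a priori reason that the pre-existing derivation (the one with $f^\dagger = s$) sends $\cao_{K(f)}$ into itself, let alone that the result is asymptotic and of $H$-type. What is needed here is a compatibility statement between the pc-sequence and the differential structure, which is precisely the content of the differential pc-sequence machinery in \cite[\S6.9]{adamtt} (your $(a_\rho)$ is of $\d$-algebraic type with minimal differential polynomial $Y' - sY$, and $f$ is a zero of it). That must be invoked or reproved; it cannot be read off from immediacy alone. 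Once that is in place, the remaining checks ($\Gamma_{K(f)} = \Gamma$, $f\sim 1$, $H$-asymptotic) and the final sentence via \cite[Lemma~10.5.8]{adamtt} go through as you say.
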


Here is a pre-$H$-field version of \cite[Lemma~10.5.20]{adamtt} with the same proof; the assumption there that $K$ is an $H$-field (rather than a pre-$H$-field) is only used at the end of the proof in showing that $L$ too is an $H$-field.
Recall that $\Psi^{\downarrow}$ is the downward closure of $\Psi$ in~$\Gamma$.
\begin{lem}\label{adh10.5.20}
Suppose that $K$ is real closed, $s<0$, and $v(s-a^\dagger) \in \Psi^{\downarrow}$ for all $a \in K^{\x}$. %\footnote{Since $K$ is pre-$\d$-valued, we also have $v(s-a^\dagger)<v(u')$ for all $a \in K^{\x}$ and $u \in K$ with $u \prece 1$, which is an assumption of \cite[Lemma~10.5.20]{adamtt}.}
Then there is a unique pair of a field ordering and a valuation on $L \coloneqq K(f)$ making it a pre-$H$-field extension of $K$ with $f>0$.
Moreover, we have:
\begin{enumerate}
    \item $vf \notin \Gamma$, $\Gamma_L = \Gamma \oplus \Z vf$, $f \prec 1$;
    \item $\Psi$ is cofinal in $\Psi_L \coloneqq \psi_L(\Gamma_L^{\neq})$;
    \item a gap in $K$ remains a gap in $L$;
    \item if $L$ has a gap not in $\Gamma$, then $[\Gamma_L]=[\Gamma]$;
    \item $\bm k_L = \bm k$.
\end{enumerate}
\end{lem}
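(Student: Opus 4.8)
The plan is to adapt the proof of \cite[Lemma~10.5.20]{adamtt} to the pre-$H$-field setting; the underlying point is that adjoining the exponential integral $f$ with $f^\dagger = s$ introduces nothing differentially new beyond $f$ itself. First I would record that an easy induction produces elements $R_n \in K$ with $f^{(n)} = R_n f$, where $R_0 = 1$ and $R_{n+1} = R_n' + s R_n$, and that none of the $R_n$ vanishes: if $R_n = 0$ for the least $n \ges 1$, then for $n \ges 2$ the element $f^{(n-1)} = R_{n-1}f$ is a nonzero constant, so $0 = (f^{(n-1)})^\dagger = R_{n-1}^\dagger + s$ gives $s = (R_{n-1}^{-1})^\dagger \in (K^\times)^\dagger$, against the standing hypothesis $s \notin (K^\times)^\dagger$ (and $R_1 = s \neq 0$ directly). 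Consequently, for each $P \in K\{Y\}^{\neq}$ there are $Q_0, \dots, Q_d \in K$, not all zero, with $P(f) = \sum_j Q_j f^j$; thus $L = K(f)$ as a field, and every nonzero element of $L$ is a ratio of nonzero elements of $K[f]$.

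Next I would construct the valuation. Using that $s < 0$ and $v(s - a^\dagger) \in \Psi^{\downarrow}$ for all $a \in K^\times$ — the exponential-integral analogue of the situation met in the proof of Proposition~\ref{rcclosure} — a valued-field extension construction as in \cite[\S10.4--10.5]{adamtt} produces a valuation on $L$ extending that of $K$ that makes $L$ an $H$-asymptotic extension of $K$ with $vf \notin \Gamma$, $\Gamma_L = \Gamma \oplus \Z vf$, $f \prec 1$, and $v_L\big(\sum_j Q_j f^j\big) = \min_j(v Q_j + j \cdot vf)$, the terms having pairwise distinct valuations; this formula also forces that it is the only valuation turning $L$ into a pre-$H$-field extension of $K$. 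From the formula (v) is immediate ($d\cdot vf \in \Gamma$ forces $d = 0$), and the value-group content of (i) is built in. For the ordering: since $K$ is real closed and $f$ is transcendental over $K$, there are orderings of $K(f)$ extending that of $K$, and because $vf \notin \Gamma$ no $f^j$ with $j \ges 1$ is $\asymp$ to any element of $K^\times$, so each $g = \sum_j Q_j f^j \in K[f]^{\neq}$ satisfies $g \sim Q_{j_0} f^{j_0}$ for the unique minimizing $j_0$; declaring $g > 0 :\Leftrightarrow Q_{j_0} > 0$ and passing to ratios defines, after the routine checks, the unique ordering of $L$ extending that of $K$ with $\ca O_L$ convex and $f > 0$. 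Then $L$ is $H$-asymptotic with convex valuation ring, $C_L = C$ by Lemma~\ref{rcsameconstants}, and $L$ is pre-$\d$-valued by the relevant criterion of \cite[\S10.1]{adamtt} (as in \cite[Lemma~10.5.20]{adamtt}); by \cite[Lemma~10.5.5]{adamtt} with $T = K^\times$ it follows that $L$ is a pre-$H$-field extension of $K$. The uniqueness of the pair (ordering, valuation) then combines the two uniqueness statements above, and (i) is now in hand.

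It then remains to verify (ii)--(iv), which concern only the $H$-asymptotic couple $(\Gamma_L, \psi_L) = (\Gamma \oplus \Z vf, \psi_L)$, where $\psi_L(vf) = v(f^\dagger) = vs$ and $\psi_L$ is constant on archimedean classes. Hence $\Psi_L = \Psi \cup \{vs\}$ when $[vf] \notin [\Gamma]$, and $\Psi_L = \Psi$ with $vs \in \Psi$ when $[vf] \in [\Gamma]$; in both cases $\Psi_L \subseteq \Psi^{\downarrow}$ and $\Psi$ is cofinal in $\Psi_L$, giving (ii). For (iii), a gap $\beta$ of $K$ satisfies $\Psi < \beta < (\Gamma^>)'$, so $\beta > \psi(\delta) \ges vs$ for a suitable $\delta \in \Gamma^{\neq}$ and thus $\beta > \Psi_L$, while $\beta < (\Gamma_L^>)'$ reduces — via $vf > 0$ and $vf' = vf + vs$ — to the inequality already holding in $K$ together with the position of $vf$. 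For (iv), if $\beta_L$ is a gap of $L$ with $\beta_L \notin \Gamma$, then $vs = \psi_L(vf) \in \Psi_L$ and $vf' = vf + vs \in (\Gamma_L^>)'$ force $0 < \beta_L - vs < vf$; a positive element of $\Gamma \oplus \Z vf$ lying archimedean-strictly below $vf$ can occur only when $[vf] \in [\Gamma]$, and then $[\Gamma_L] = [\Gamma]$, as required.

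The step I expect to be the main obstacle is the bookkeeping underlying the valuation construction and the last paragraph: the precise cut that $vf$ realizes in $\Gamma$ is governed by the set $\{v(s - a^\dagger) : a \in K^\times\}$ and its supremum, and one must track it carefully to confirm at once that $vf > 0$, that gaps of $K$ persist in $L$, and that a new gap in $L$ forces the collapsed configuration $[\Gamma_L] = [\Gamma]$ — equivalently $vs \in \Psi$. A more routine but still delicate point is checking (PD2) for $L$, rather than mere $H$-asymptoticity, which I would extract from the appropriate lemma in \cite[\S10.1]{adamtt} once the asymptotic couple, the cofinality in (ii), and $C_L = C$ are available.
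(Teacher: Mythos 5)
The paper gives no proof of its own here: it introduces the statement as ``a pre-$H$-field version of [adamtt, Lemma~10.5.20] with the same proof,'' so the intended argument is to take the valued/$\d$-valued version verbatim from [adamtt, Lemma~10.5.20] — which already yields all of (i)--(v), since those are properties of the asymptotic couple and residue field and are unaffected by adding an ordering — and then only supply the ordering and the pre-$H$-field upgrade. Your proposal is correct and lands in essentially the same place, but it does noticeably more work than the paper expects: the paragraph reproving (ii)--(iv) is redundant, and it is also the part where your argument is loosest (e.g.\ the dichotomy ``$\Psi_L=\Psi\cup\{vs\}$ if $[vf]\notin[\Gamma]$, $\Psi_L=\Psi$ if $[vf]\in[\Gamma]$'' and the archimedean-class bookkeeping in (iv) are asserted rather than proved, and your closing paragraph concedes this is the shaky part). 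Those items are free from [adamtt], so you should just cite them rather than rederive them.

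The genuinely new content for the pre-$H$-field version is exactly what you identify: a unique ordering with $f>0$ making $\ca O_L$ convex. Your construction (every nonzero $g\in K[f]$ has a dominant monomial $Q_{j_0}f^{j_0}$ by the direct-sum decomposition $\Gamma_L=\Gamma\oplus\Z vf$; declare $g>0:\Leftrightarrow Q_{j_0}>0$) and the uniqueness argument via convexity of $\ca O_L$ are correct and standard. Also correct are the use of Lemma~\ref{rcsameconstants} for $C_L=C$ and the appeal to [adamtt, Lemma~10.5.5] with $T=K^{\x}$ to verify (PH3), which is the same device the paper uses in the analogous Lemma~\ref{adh6.3.2o}. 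So: right approach, with the caveat that you should not reprove the asymptotic-couple properties and should lean on [adamtt, Lemma~10.5.20] for them, as the paper does.
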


\subsection{Exponential integration closures}\label{sec:dhl:expint}
Let $E$ be a differential field.
We call a differential field extension $F$ of $E$ an \deft{exponential integration extension} of $E$ (\deft{expint-extension} for short) if $C_F$ is algebraic over $C_E$ and for every $a \in F$ there are $t_1, \dots, t_n \in F^{\x}$ with $a \in E(t_1, \dots, t_n)$ such that for $i=1,\dots,n$, either $t_i$ is algebraic over $E(t_1,\dots,t_{i-1})$ or $t_i^\dagger \in E(t_1,\dots,t_{i-1})$.
In particular, any expint-extension is $\d$-algebraic.
The following is routine.

\begin{lem}\label{adh10.6.6o}
Let $E \subseteq F \subseteq M$ be a chain of differential field extensions.
\begin{enumerate}
    \item If $M$ is an expint-extension of $E$, then $M$ is an expint-extension of $F$.
    \item If $M$ is an expint-extension of $F$ and $F$ is an expint-extension of $E$, then $M$ is an expint-extension of $E$.
\end{enumerate}
\end{lem}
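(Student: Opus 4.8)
The plan is to reduce both parts to a single observation: if $A \subseteq B$ are differential fields and $t_1, \dots, t_n$ is a \emph{tower over $A$} in the sense of the definition (each $t_i$ is algebraic over $A(t_1, \dots, t_{i-1})$ or has $t_i^\dagger \in A(t_1, \dots, t_{i-1})$), then $t_1, \dots, t_n$ is also such a tower over $B$, since $A(t_1, \dots, t_{i-1}) \subseteq B(t_1, \dots, t_{i-1})$ and both being algebraic over, and membership in, a field are preserved on passing to a larger field. For the constant-field conditions I would use that $C_E \subseteq C_F \subseteq C_M$ (the derivations restrict) together with transitivity of algebraic field extensions.

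For part (i): since $C_M$ is algebraic over $C_E \subseteq C_F$, it is algebraic over $C_F$. Given $a \in M$, apply the definition to $a$ to get $t_1, \dots, t_n \in M^{\x}$ with $a \in E(t_1, \dots, t_n)$ forming a tower over $E$; then $a \in F(t_1, \dots, t_n)$, and by the observation (applied with $A = E(t_1, \dots, t_{i-1})$ and $B = F(t_1, \dots, t_{i-1})$) the same $t_1, \dots, t_n$ form a tower over $F$. Hence $M$ is an expint-extension of $F$.

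For part (ii): by transitivity $C_M$ is algebraic over $C_F$ and $C_F$ over $C_E$, so $C_M$ is algebraic over $C_E$. Fix $a \in M$ and take $t_1, \dots, t_n \in M^{\x}$ with $a \in F(t_1, \dots, t_n)$ forming a tower over $F$. Only finitely many elements $b_1, \dots, b_m$ of $F$ occur as coefficients in the rational expression for $a$ in the $t_i$ and in the data witnessing the tower condition (an algebraic relation for each algebraic $t_i$, the rational expression for $t_i^\dagger$ for each exponential $t_i$). Since $F$ is an expint-extension of $E$, each $b_j$ lies in $E(s_{j,1}, \dots, s_{j,k_j})$ for some tower over $E$; concatenating these finitely many towers produces one tower $s_1, \dots, s_p$ over $E$ — here again the point is that prepending part of a tower enlarges the base at each later stage, so the conditions persist — with all $b_j \in E' \coloneqq E(s_1, \dots, s_p)$. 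I would then check that $s_1, \dots, s_p, t_1, \dots, t_n$ is a tower over $E$ with $a \in E(s_1, \dots, s_p, t_1, \dots, t_n)$: the $s$-part holds by construction, and for each $t_i$ the chosen algebraic relation or the expression for $t_i^\dagger$ has coefficients among the $b_j \in E'$, hence lies over $E'(t_1, \dots, t_{i-1}) = E(s_1, \dots, s_p, t_1, \dots, t_{i-1})$, while $a$ lies in $E'(t_1, \dots, t_n)$. Thus $M$ is an expint-extension of $E$.

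I do not expect a genuine obstacle — the lemma is routine, as indicated — but the step needing the most care is the concatenation in (ii): one must confirm that only finitely many elements of $F$ enter into expressing $a$ and the tower data, and that placing an $E$-tower in front of the $F$-tower $t_1, \dots, t_n$ leaves every defining condition intact once the base is correspondingly enlarged.
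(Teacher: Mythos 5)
Your proof is correct and is the standard argument; the paper states this lemma without proof, calling it routine (the analogue for Liouville extensions is \cite[Lemma~10.6.6]{adamtt}). Both parts go through exactly as you describe, and the step you flag as needing the most care---extracting the finitely many elements of $F$ appearing in the tower data for $a$, covering them by an $E$-tower $s_1,\dots,s_p$, and checking that $s_1,\dots,s_p,t_1,\dots,t_n$ is an $E$-tower with $a \in E(s_1,\dots,s_p,t_1,\dots,t_n)$---is handled correctly.
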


\begin{lem}\label{adh10.6.8}
If $F$ is an expint-extension of $E$, then $|F|=|E|$.
\end{lem}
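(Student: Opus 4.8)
The plan is to follow the proof of \cite[Lemma~10.6.8]{adamtt}, building $F$ up from $E$ in $\omega$ stages and checking that no stage increases the cardinality past $|E|$. Since $\Q \subseteq E$, the cardinal $|E|$ is infinite, and $|E| \les |F|$ is immediate from $E \subseteq F$; so it suffices to prove $|F| \les |E|$. The first step is to bound the constants: since $C_F$ is algebraic over $C_E$ by the definition of expint-extension and $|C_E| \les |E|$, we get $|C_F| \les |E|$ (an algebraic extension of an infinite field has the same cardinality, and $\Q \subseteq C_E$). The point of this is the following observation: for any $s$ lying in a subfield of $F$, the solution set $\{t \in F^{\x} : t^\dagger = s\}$ is either empty or equal to a coset $t_0 C_F^{\x}$ of $C_F^{\x}$ in $F^{\x}$, because $t_1^\dagger = t_2^\dagger$ forces $t_1/t_2 \in C_F^{\x}$; in particular this set has cardinality at most $|C_F| \les |E|$.

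Next I would define an increasing chain of subfields $F_0 \subseteq F_1 \subseteq \cdots$ of $F$ by $F_0 \coloneqq E$ and letting $F_{n+1}$ be the subfield of $F$ generated over $F_n$ by all elements of $F$ that are algebraic over $F_n$ together with all $t \in F^{\x}$ with $t^\dagger \in F_n$. A routine check shows that each $F_n$ is automatically a \emph{differential} subfield of $F$: derivatives of elements algebraic over $F_n$, and of elements $t$ with $t^\dagger \in F_n$ (using $t' = t \cdot t^\dagger$ and that $F_n$ is closed under $\der$), already lie in the generated field. Then I would prove by induction that $|F_n| \les |E|$: given $|F_n| \les |E|$, the subfield of elements of $F$ algebraic over $F_n$ has cardinality at most that of the algebraic closure of $F_n$, hence $\les |E|$; and, by the previous paragraph, the set of $t \in F^{\x}$ with $t^\dagger \in F_n$ is a union of at most $|F_n| \les |E|$ sets each of size $\les |E|$, hence of size $\les |E|$; so $F_{n+1}$ is generated over a field of size $\les |E|$ by at most $|E|$ elements, and therefore $|F_{n+1}| \les |E|$.

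Finally, I would show $F = \bigcup_n F_n$, which is exactly where the tower structure of expint-extensions enters: given $a \in F$, pick $t_1, \dots, t_m \in F^{\x}$ with $a \in E(t_1, \dots, t_m)$ as in the definition, and prove by induction on $i$ that $E(t_1, \dots, t_i) \subseteq F_i$ --- if $t_i$ is algebraic over $E(t_1, \dots, t_{i-1}) \subseteq F_{i-1}$ it lands in $F_i$ by construction, and likewise if $t_i^\dagger \in E(t_1, \dots, t_{i-1}) \subseteq F_{i-1}$. Hence $a \in F_m$, so $F = \bigcup_n F_n$ and $|F| \les \aleph_0 \cdot |E| = |E|$, completing the argument. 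The only place a genuine hypothesis is used --- and the one subtlety relative to, say, a purely algebraic extension --- is the bound on $|C_F|$: without it an exponential-integral step could in principle adjoin too many elements, since $t$ with prescribed $t^\dagger$ is determined only up to a constant factor.
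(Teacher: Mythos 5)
Your proof is correct and is exactly the argument the paper has in mind: the paper simply says ``Minor modifications to the proof of [ADH, Lemma~10.6.8] yield the following,'' and that proof is the same $\omega$-stage tower construction, with the modification being precisely what you implemented (dropping the integral case $t_i' \in E(t_1,\dots,t_{i-1})$ and keeping only the algebraic and exponential-integral cases, with the bound on $|C_F|$ controlling the size of each $\dagger$-fiber via cosets of $C_F^{\times}$).
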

\begin{proof}
Supposing that $F$ is an expint-extension of $E$, define an increasing sequence of differential subfields of $F$ by setting $E_0 \coloneqq E$ and $E_{n+1}$ to be the algebraic closure of $E_n$ in $F$ when $n$ is even and $E_{n+1} \coloneqq E_n(\{ b \in F^{\times} : b^{\dagger} \in E_n\})$ when $n$ is odd.
Clearly, $F = \bigcup_n E_n$, so it remains to check by induction that $|E_n|=|E|$ for all~$n$.
\end{proof}

Now suppose that $E$ is an ordered differential field.
We call $E$ \deft{exponential integration closed} (\deft{expint-closed} for short) if it is real closed and has exponential integration.
We call an ordered differential field extension $F$ of $E$ an \deft{exponential integration closure} (\deft{expint-closure} for short) of $E$ if it is an expint-extension of $E$ that is expint-closed.
Note that in this latter definition, ``closure'' does not include a universal property, unlike for instance the gap-closures of \S\ref{sec:ac-small}.

The next observation has the same proof as \cite[Lemma~10.6.9]{adamtt}.
\begin{lem}\label{adh10.6.9o}
If $E$ is expint-closed, then $E$ has no proper expint-extension with the same constants as~$E$.
\end{lem}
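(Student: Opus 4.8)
The plan is to argue by contradiction, reducing to the very first nontrivial step of an exponential-integration tower. Suppose $F$ is a proper expint-extension of $E$ with $C_F = C_E$, and fix $a \in F \setminus E$. By the definition of expint-extension there are $t_1, \dots, t_n \in F^\times$ with $a \in E(t_1, \dots, t_n)$ such that, for each $i$, either $t_i$ is algebraic over $E(t_1, \dots, t_{i-1})$ or $t_i^\dagger \in E(t_1, \dots, t_{i-1})$. Since $a \notin E$, at least one $t_i$ lies outside $E$; taking $i$ least with $t_i \notin E$, we have $E(t_1, \dots, t_{i-1}) = E$, so, writing $t := t_i$, the element $t$ satisfies $t \notin E$ and either $t$ is algebraic over $E$ or $t^\dagger \in E$. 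It then suffices to rule out both alternatives.

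In the first case, since $E$ is real closed, $E \subsetneq E(t)$ forces $E(t)$ to be an $E$-isomorphic copy of the algebraic closure $E(\sqrt{-1})$ of $E$; in particular $F$ contains an element $j$ with $j^2 = -1$ and $j \notin E$. Differentiating $j^2 = -1$ gives $2jj' = 0$, hence $j' = 0$ as $E$ has characteristic $0$, so $j \in C_F = C_E \subseteq E$, a contradiction. In the second case, since $E$ has exponential integration, I would pick $b \in E^\times$ with $b^\dagger = t^\dagger$; then $(t/b)^\dagger = 0$, so $t/b \in C_F = C_E \subseteq E$, whence $t = (t/b) b \in E$, again a contradiction. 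Either way we reach a contradiction, so $F = E$, as desired.

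The argument is short, and I do not expect a genuine obstacle: the only points requiring care are the reduction to a step whose base field is exactly $E$ and the invocation of the structure of finite extensions of a real closed field. The crux is simply that the hypothesis $C_F = C_E$ prevents the constant field from growing, which kills both a square root of $-1$ and any new exponential integral; this is precisely the mechanism behind \cite[Lemma~10.6.9]{adamtt}, whose proof this mirrors.
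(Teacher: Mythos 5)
Your proof is correct and takes essentially the same route as the paper, which simply cites the proof of \cite[Lemma~10.6.9]{adamtt}: reduce to the first $t_i \notin E$ in a decomposition witnessing $a \in E(t_1,\dots,t_n)$, then rule out the algebraic case using real closedness of $E$ together with $C_F = C_E$, and rule out the exponential-integral case using $(E^\times)^\dagger = E$ together with $C_F = C_E$. The two minor points you flagged (reduction to a base field equal to $E$, and the classification of proper algebraic extensions of a real closed field) are handled correctly, so there is nothing to add.
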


\begin{ass}For the rest of this subsection, suppose that $K$ has gap~$0$.\end{ass}
From this assumption it follows that $(\Gamma^>)'=\Gamma^>$ and $\Psi^{\downarrow} = \Gamma^<$ (see \S\ref{sec:ac-small}).

Below, we construe the real closure $K^{\rc}$ of $K$ as an ordered valued differential field by equipping it with the unique derivation extending that of $K$ and the unique valuation extending that of $K$ whose valuation ring is convex (see for example \cite[Corollary~3.5.18]{adamtt}).
Then $\Gamma_{K^{\rc}}$ is the divisible hull $\Q\Gamma$ of $\Gamma$, $\bm k_{K^{\rc}}$ is the real closure of $\bm k$, and $C_{K^{\rc}}$ is the real closure of $C$.
Since $K$ is a pre-$H$-field, so is $K^{\rc}$ by \cite[Proposition~10.5.4]{adamtt}, and it also has gap~$0$ (see \S\ref{sec:ac-small} for details on the extensions of the asymptotic couples).

\begin{defn}
We call a strictly increasing chain $(K_\lambda)_{\lambda \les \mu}$ of pre-$H$-fields with gap~$0$ an \deft{expint-tower on $K$} if:
\begin{enumerate}
    \item $K_0=K$;
    \item if $\lambda$ is a limit ordinal, then $K_{\lambda} =  \bigcup_{\rho<\lambda} K_\rho$;
    \item if $\lambda<\lambda+1\les \mu$, then either:
        \begin{enumerate}
            \myitem[(a)]\manuallabel{expinttower:a}{(a)} $K_\lambda$ is not real closed and $K_{\lambda+1}$ is the real closure of $K_\lambda$; or
            \myitem[(b)]\manuallabel{expinttower:b}{(b)} $K_\lambda$ is real closed and $K_{\lambda+1}=K_{\lambda}(y_\lambda)$ with $y_\lambda \notin K_\lambda$ satisfying either:
                \begin{enumerate}
                    \myitem[(b1)]\manuallabel{expinttower:b1}{(b1)} $y_\lambda^\dagger = s_\lambda \in K_\lambda$ with $y_\lambda \sim 1$, $s_\lambda \prec 1$, and $s_\lambda \neq a^\dagger$ for all $a \in K_\lambda^{\x}$; or
                    \myitem[(b2)]\manuallabel{expinttower:b2}{(b2)} $y_\lambda^\dagger = s_\lambda \in K_\lambda$ with $s_\lambda<0$, $y_\lambda>0$, and $s_\lambda-a^\dagger \succ 1$ for all $a \in K_\lambda^{\x}$.
                \end{enumerate}
        \end{enumerate}
\end{enumerate}
Given such a tower, we call $K_{\mu}$ its  \deft{top} and set $C_\lambda \coloneqq C_{K_\lambda}$ and $\bm k_\lambda \coloneqq \bm k_{K_\lambda}$ for $\lambda\les\mu$.
\end{defn}

\begin{lem}\label{adh10.6.13o}
Let $(K_\lambda)_{\lambda \les \mu}$ be an expint-tower on $K$.
Then:
\begin{enumerate}
    \item\label{adh10.6.13o1} $K_\mu$ is an expint-extension of $K$;
    \item\label{adh10.6.13o2} $C_\mu$ is the real closure of $C$ if $\mu>0$;
    \item\label{adh10.6.13o3} $\bm k_\mu$ is the real closure of $\bm k$ if $\mu>0$;
    \item\label{adh10.6.13o4} $|K_\lambda|=|K|$, and hence $\mu<|K|^+$.
\end{enumerate}
\end{lem}
\begin{proof}
For \ref{adh10.6.13o1}, go by induction on $\lambda\les\mu$.
The main thing to check is the condition on the constant fields.
If $\lambda=0$ or $\lambda$ is a limit ordinal, this is clear.
If $K_{\lambda+1}$ is the real closure of $K_\lambda$, then $C_{\lambda+1}$ is the real closure of $C_{\lambda}$.
If $K_\lambda$ is real closed and $K_{\lambda+1}$ is as in \ref{expinttower:b} above, then $C_{\lambda+1}=C_{\lambda}$ by Lemma~\ref{rcsameconstants}.

For \ref{adh10.6.13o2}, $C_1$ is the real closure of $C$, and then $C_\lambda=C_1$ for all $\lambda \ges 1$ as in the proof of \ref{adh10.6.13o1}.

For \ref{adh10.6.13o3}, $\bm k_1$ is the real closure of $\bm k$, and then $\bm k_\lambda = \bm k_1$ for all $\lambda \ges 1$;
in case \ref{expinttower:a} this is obvious, in case \ref{expinttower:b1} 
Lemma~\ref{adh10.5.18} applies, and in case \ref{expinttower:b2} Lemma~\ref{adh10.5.20} applies.

Finally, \ref{adh10.6.13o4} follows from \ref{adh10.6.13o1} and Lemma~\ref{adh10.6.8}.
\end{proof}

\begin{lem}\label{adh10.6.14o}
Let $L$ be the top of a maximal expint-tower on $K$ such that $\bm k_L$ has exponential integration.
Then $L$ is an expint-closure of $K$.
\end{lem}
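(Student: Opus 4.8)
The plan is to argue by contradiction with maximality: if $L$ were not expint-closed, I would produce a strictly longer expint-tower on $K$. Two things must be established, that $L$ is real closed and that $L$ has exponential integration. For real closedness, if $L$ is not real closed then its real closure $L^{\rc}$ is again a pre-$H$-field with gap~$0$ (recalled in \S\ref{sec:prelim}), and appending $L^{\rc}$ to $(K_\lambda)_{\lambda\les\mu}$ via clause~(a) gives a longer expint-tower, a contradiction. So $L$ is real closed, and in particular henselian (its valuation ring being convex).

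For exponential integration, let $s\in L$ and suppose toward a contradiction that $s\notin(L^\times)^\dagger$. Since $(z^{-1})^\dagger=-z^\dagger$, also $-s\notin(L^\times)^\dagger$, and since $\{v(s-a^\dagger):a\in L^\times\}=\{v(-s-a^\dagger):a\in L^\times\}$, I may replace $s$ by $-s$ to assume $s<0$. Now split into two cases. If there is $a\in L^\times$ with $s':=s-a^\dagger\prec 1$, then $s'\neq 0$ and $s'\notin(L^\times)^\dagger$ (otherwise $s'=z^\dagger$ would give $s=(az)^\dagger\in(L^\times)^\dagger$), and since $L$ has gap~$0$ we have $vs'\in\Gamma_L^>=(\Gamma_L^>)'$; Lemma~\ref{adh10.5.18} then yields an immediate $H$-asymptotic extension $L(y)$ with $y^\dagger=s'$, $y\sim 1$, carrying a unique compatible pre-$H$-field ordering, and immediacy forces gap~$0$. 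Appending $L(y)$ to the tower as a step of type~(b1) contradicts maximality. Otherwise $s-a^\dagger\succe 1$ for all $a\in L^\times$; since $\bm k_L$ has exponential integration, i.e.\ $\bm k_L=(\bm k_L^\times)^\dagger$, Lemma~\ref{10.4.2cor} upgrades this to $s-a^\dagger\succ 1$ for all $a\in L^\times$, so $v(s-a^\dagger)<0$ lies in $\Gamma_L^<=\Psi^{\downarrow}$. As $L$ is real closed and $s<0$, Lemma~\ref{adh10.5.20} produces a pre-$H$-field extension $L(y)$ with $y^\dagger=s$ and $y>0$; by part~(iii) of that lemma the gap~$0$ of $L$ persists in $L(y)$. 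Appending $L(y)$ as a step of type~(b2) again contradicts maximality. Hence $s\in(L^\times)^\dagger$, so $L$ has exponential integration and is therefore expint-closed. Finally, $L=K_\mu$ is an expint-extension of $K$ by Lemma~\ref{adh10.6.13o}(i), so $L$ is an expint-closure of $K$.

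The routine points to verify are that each constructed $L(y)$ is a genuinely proper pre-$H$-field extension with gap~$0$ (immediacy handles this in the first case, Lemma~\ref{adh10.5.20}(i) and~(iii) in the second), and that $y$ can be taken transcendental over $L$, which follows from $s\notin(L^\times)^\dagger$ once one notes that $(L^\times)^\dagger$ is divisible because $L$ is real closed. The only part needing genuine care is matching the dichotomy on $s$ to clauses~(b1) and~(b2) and checking the hypotheses of Lemmas~\ref{adh10.5.18} and~\ref{adh10.5.20} — in particular using that gap~$0$ delivers $(\Gamma_L^>)'=\Gamma_L^>$ and $\Psi^{\downarrow}=\Gamma_L^<$, and that the hypothesis on $\bm k_L$ is exactly what feeds Lemma~\ref{10.4.2cor} — but I do not expect a serious obstacle here.
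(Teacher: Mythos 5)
Your proof is correct and follows essentially the same route as the paper's: appending the real closure as a type (a) step to handle real closedness, then splitting on whether $s-a^\dagger\prec 1$ for some $a\in L^\times$ (type (b1) via Lemma~\ref{adh10.5.18}) or $s-a^\dagger\succe 1$ for all $a$ (type (b2), upgraded to $\succ 1$ by Lemma~\ref{10.4.2cor} and then Lemma~\ref{adh10.5.20}). The small extra checks you flag — that one may reduce to $s<0$, and that $y$ is transcendental — are handled implicitly in the paper (the setup of \S\ref{sec:dhl:adexpint} already takes $f$ transcendental) and are correct as you state them.
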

\begin{proof}
Suppose that $L$ is not expint-closed.
If $L$ is not real closed, then its real closure is a proper pre-$H$-field extension of $L$ with gap~$0$, so we could extend the expint-tower.
We are left with the case that $L$ is real closed and we have $s \in L \setminus (L^{\x})^\dagger$.
In particular, $L$ is henselian.
We may assume that $s<0$.
Take $f$ transcendental over $L$ with $f^\dagger = s$.

First suppose that $s-a^\dagger \prec 1$ for some $a \in L^{\x}$.
Then taking such an $a$ and replacing $f$ and $s$ by $f/a$ and $s-a^\dagger$, we arrange that $s \prec 1$.
Giving $L(f)$ the valuation and ordering from Lemma~\ref{adh10.5.18} makes it a pre-$H$-field extension of $L$ with gap~$0$ of type~\ref{expinttower:b1}.

Now suppose that $s-a^\dagger \succe 1$ for all $a \in L^{\x}$.
By Lemma~\ref{10.4.2cor}, $s-a^\dagger \succ 1$ for all $a \in L^{\x}$.
Then giving $L(f)$ the ordering and valuation from Lemma~\ref{adh10.5.20} makes it a pre-$H$-field extension of $L$ with gap~$0$ of type~\ref{expinttower:b2}.

Thus $L$ is expint-closed, and hence an expint-closure of $K$ by Lemma~\ref{adh10.6.13o}\ref{adh10.6.13o1}.
\end{proof}

Recall from \S\ref{sec:rcc} the term ``residue constant closed''.
In a few arguments below, this property roughly plays the same role as ``$\d$-valued'' does in the construction of Liouville closures in \cite[\S10.6]{adamtt}.
\begin{cor}\label{expintclosure}
Suppose that $\bm k$ is expint-closed.
Then $K$ has an expint-closure that is a pre-$H$-field extension of $K$ with gap~$0$.
If $K$ is residue constant closed, then $K$ has a residue constant closed expint-closure that is a pre-$H$-field extension of $K$ with gap~$0$.
\end{cor}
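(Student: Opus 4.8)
The plan is to realize the expint-closure as the top of a maximal expint-tower on $K$, precisely the construction prepared by the preceding lemmas, and then to read off residue constant closedness in the second statement from bookkeeping already in place. First I would invoke Lemma~\ref{adh10.6.13o}(iv), which bounds the length of any expint-tower on $K$ by $|K|^+$; since the union of a chain of expint-towers is again an expint-tower, Zorn's lemma then yields a maximal expint-tower $(K_\lambda)_{\lambda\les\mu}$ on $K$, whose top $L \coloneqq K_\mu$ is a pre-$H$-field with gap~$0$ by construction. To apply Lemma~\ref{adh10.6.14o} I need $\bm k_L$ to have exponential integration; here I would use that $\bm k$ is expint-closed, hence real closed and so equal to its own real closure, so that Lemma~\ref{adh10.6.13o}(iii) forces $\bm k_L = \bm k$ (trivially so when $\mu = 0$), and $\bm k$ has exponential integration by hypothesis. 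Lemma~\ref{adh10.6.14o} then gives that $L$ is an expint-closure of $K$ that is a pre-$H$-field with gap~$0$, proving the first statement.

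For the second statement I would check that this same $L$ is residue constant closed whenever $K$ is. Being expint-closed, $L$ is real closed, hence henselian. Since $K$ has small derivation, so does $L$, and since $L$ is pre-$\d$-valued we have $C_L \subseteq \ca O_L$, so the residue map of $L$ carries $C_L$ into $C_{\bm k_L}$. Using that $\res(C) = C_{\bm k}$ because $K$ is residue constant closed, together with $C \subseteq C_L$ and $\bm k_L = \bm k$, I then get $C_{\bm k} = \res(C) \subseteq \res(C_L) \subseteq C_{\bm k_L} = C_{\bm k}$, so $\res(C_L) = C_{\bm k_L}$ and $L$ is residue constant closed, as required.

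No step here is genuinely hard: all the substance has been localized into Lemmas~\ref{adh10.6.13o} and~\ref{adh10.6.14o} and their inputs (Lemmas~\ref{adh10.5.18} and~\ref{adh10.5.20}). The one point deserving care is that passing to the expint-closure must not enlarge the residue field; this rests on $\bm k$ already being real closed, so the real-closure steps of the tower leave it unchanged while the type-(b1) and type-(b2) steps are immediate or fix $\bm k$, and it is exactly this observation that makes both the invocation of Lemma~\ref{adh10.6.14o} and the residue constant closedness computation work.
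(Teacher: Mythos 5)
Your proposal is correct and follows essentially the same route as the paper: take a maximal expint-tower by Zorn (using Lemma~\ref{adh10.6.13o}(iv)), note $\bm k_\mu = \bm k$ via Lemma~\ref{adh10.6.13o}(iii) and the fact that $\bm k$ is already real closed, conclude via Lemma~\ref{adh10.6.14o}, and for the second statement observe that $K_\mu$ is henselian and $C$ already maps onto $C_{\bm k} = C_{\bm k_\mu}$. Your chain of inclusions for the residue constant closedness is just a more explicit spelling-out of the paper's one-line justification.
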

\begin{proof}
By Lemma~\ref{adh10.6.13o}\ref{adh10.6.13o4}, Zorn gives a maximal expint-tower $(K_\lambda)_{\lambda \les \mu}$ on $K$.
Then $\bm k_\mu = \bm k$ by Lemma~\ref{adh10.6.13o}\ref{adh10.6.13o3}, so $C_{\bm k_\mu}=C_{\bm k}$, and $C_{\mu}=C$ by Lemma~\ref{adh10.6.13o}\ref{adh10.6.13o2}.
Hence $K_\mu$ is an expint-closure of $K$ by Lemma~\ref{adh10.6.14o}.
If $K$ is residue constant closed, then so is $K_\mu$ since it is real closed, so henselian, and $C=C_{\mu}$ maps onto $C_{\bm k} = C_{\bm k_\mu}$.
\end{proof}

\begin{lem}\label{adh10.6.15o}
Let $M$ be a residue constant closed, expint-closed pre-$H$-field extension of $K$ with gap~$0$.
Suppose that $(K_\lambda)_{\lambda\les\mu}$ is an expint-tower on $K$ in $M$ \textnormal{(}i.e., each $K_\lambda$ is a pre-$H$-subfield of $M$\textnormal{)} and maximal in $M$ \textnormal{(}i.e., it cannot be extended to an expint-tower $(K_\lambda)_{\lambda\les\mu+1}$ on $K$ in $M$\textnormal{)} such that $\bm k_\mu$ has exponential integration.
Then $(K_\lambda)_{\lambda\les\mu}$ is a maximal expint-tower on $K$.
\end{lem}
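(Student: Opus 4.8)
The plan is to prove that $L \coloneqq K_\mu$ is expint-closed; granting this, the tower is maximal, since any one-step extension $(K_\lambda)_{\lambda\les\mu+1}$ would be an expint-extension of $L$ with the same constants — a real-closure step is algebraic and leaves the constants unchanged because $L$ is real closed, while a step of type (b) adjoins an exponential integral and preserves constants by Lemma~\ref{rcsameconstants} — contradicting Lemma~\ref{adh10.6.9o}.

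So suppose $L$ is not expint-closed; the idea is to produce a further expint-tower step \emph{inside $M$}, contradicting the maximality of $(K_\lambda)_{\lambda\les\mu}$ in $M$. If $L$ is not real closed, the real closure of $L$ computed inside $M$ carries the restriction of the valuation of $M$, whose valuation ring is convex; by the uniqueness in \S\ref{sec:prelim} of the real closure as a pre-$H$-field this is a step of type (a), with gap~$0$ by the facts recalled in \S\ref{sec:prelim}. Otherwise $L$ is real closed and there is $s \in L \setminus (L^{\x})^\dagger$, which we may take negative since $(1/w)^\dagger = -w^\dagger$. As $M$ is expint-closed it has exponential integration, so we get $z \in M^{\x}$ with $z^\dagger = s$; moreover $z$ is transcendental over $L$, as otherwise $L(z)$ would be a finite extension of the real closed field $L$ that is orderable (being a subfield of $M$), hence equal to $L$, forcing $s \in (L^{\x})^\dagger$.

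Now split according to $\{v(s - a^\dagger) : a \in L^{\x}\}$. Suppose first $s - a^\dagger \prec 1$ for some $a \in L^{\x}$; set $y \coloneqq z/a$, so $s_1 \coloneqq s - a^\dagger = y^\dagger \prec 1$. Since $M$ has gap~$0$, $vy \neq 0$ would give $v(y^\dagger) = \psi_M(vy) < 0$, contradicting $v(s_1)>0$; hence $y \asymp 1$, and then $y' = s_1 y \prec 1$ shows $\overline y \in C_{\bm k_M}$. Here I use that $M$ is residue constant closed: pick $c \in C_M$ with $c \asymp 1$ and $\overline c = \overline y$, and replace $y$ by $y/c$, so now additionally $y \sim 1$ while $y^\dagger = s_1 \in L$ is unchanged; as before $y \notin L$, $y$ is transcendental over $L$, $s_1 \prec 1$, and $s_1 \notin (L^{\x})^\dagger$. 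Since $L$ is henselian and, by gap~$0$, $vs_1 \in (\Gamma^>)'$, Lemma~\ref{adh10.5.18} prescribes the unique valuation and ordering making $L(y)$ a pre-$H$-field extension of $L$ with $y \sim 1$; the structure $L(y)$ inherits from $M$ is of this type, hence equals it, exhibiting $L(y)$ as an immediate pre-$H$-field extension of $L$ with gap~$0$, i.e.\ a step of type (b1) inside $M$. In the remaining case $s - a^\dagger \succe 1$ for all $a \in L^{\x}$, and then in fact $s - a^\dagger \succ 1$ for all such $a$ by Lemma~\ref{10.4.2cor} (this is where $\bm k_\mu = \bm k_L$ having exponential integration enters); so $v(s - a^\dagger) \in \Psi^\downarrow$, and taking $y \coloneqq \lvert z \rvert > 0$ with $y^\dagger = s$ and $y$ transcendental over $L$, Lemma~\ref{adh10.5.20} identifies the structure $L(y)$ inherits from $M$ with the unique pre-$H$-field structure over $L$ having $y > 0$, which has gap~$0$ by part (iii) of that lemma — a step of type (b2) inside $M$. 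Either way the maximality of the tower in $M$ is contradicted, so $L$ is expint-closed.

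The main obstacle is the subcase $s - a^\dagger \prec 1$: the natural candidate $z/a \in M$ need not satisfy $z/a \sim 1$ as required for a type-(b1) step, and correcting it is exactly where residue constant closedness of $M$ is needed; one then has to check carefully, via the uniqueness clauses of Lemmas~\ref{adh10.5.18} and~\ref{adh10.5.20}, that the valuation and ordering $L(y)$ inherits from $M$ are precisely those demanded of a tower step. The asymptotic-couple bookkeeping (gap~$0$, and the memberships $vs_1 \in (\Gamma^>)'$ resp.\ $v(s - a^\dagger) \in \Psi^\downarrow$) and the transcendence of $z$ and $y$ over $L$ are routine given that $L$ is real closed with gap~$0$ and sits inside the ordered field $M$.
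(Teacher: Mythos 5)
Your proposal is correct and follows essentially the same route as the paper's proof: use maximality of the tower in $M$ together with $M$'s properties (real closed, expint-closed, residue constant closed) to show that any obstruction to $L=K_\mu$ being expint-closed would let you extend the tower one more step \emph{inside} $M$, which is absurd. The added framing (first establishing $L$ is expint-closed, then appealing to Lemma~\ref{adh10.6.9o}) is a harmless detour — once $L$ is real closed and has exponential integration, no one-step extension of either type (a) or (b) is even formally possible, so Lemma~\ref{adh10.6.9o} isn't really needed — and the extra checks you carry out (transcendence of the adjoined element, henselianity of $L$, matching the inherited structure on $L(y)$ against the uniqueness clauses of Lemmas~\ref{adh10.5.18} and~\ref{adh10.5.20}) are details the paper leaves implicit but which your write-up correctly supplies. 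The residue-constant-closed fix in the $s-a^\dagger\prec1$ subcase is exactly the paper's argument.
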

\begin{proof}
Since $M$ is real closed, $K_\mu$ must be real closed by maximality in $M$.
So supposing $(K_\lambda)_{\lambda \les \mu}$ is not a maximal expint-tower on $K$, we have $s_\mu \in K_\mu$ such that $s_\mu \neq a^\dagger$ for all $a \in K_\mu^{\x}$; we may assume that $s_\mu<0$. 
Since $M$ is expint-closed, we have $y_\mu \in M$ with $y_\mu^\dagger=s_\mu$; we may assume that $y_\mu>0$.

First suppose that $s_\mu-a^\dagger \succe 1$ for all $a \in K_\mu^{\x}$, so actually $s_\mu-a^\dagger \succ 1$ for all $a \in K_\mu^{\x}$ by Lemma~\ref{10.4.2cor}.
Thus setting $K_{\mu+1} \coloneqq K_\mu(y_\mu)$ yields an extension of $(K_\lambda)_{\lambda \les \mu}$ in $M$ of type \ref{expinttower:b2}.

Now suppose that $s_\mu-a^\dagger \prec 1$ for some $a \in K_\mu^{\x}$.
Taking such an $a$ and replacing $s_\mu$ and $y_\mu$ by $s_\mu-a^\dagger$ and $y_\mu/a$, we may assume that $s_\mu \prec 1$.
Since $M$ has gap~$0$, we have $y_\mu \asymp 1$ and so $y_\mu' \asymp s_\mu \prec 1$.
That is, $\overline{y_\mu} \in C_{\res(M)}$, so we have $c \in C_M$ with $y_\mu \sim c$.
Replacing $y_\mu$ by $y_\mu/c$, we obtain the desired extension of $(K_\lambda)_{\lambda \les \mu}$ in $M$ of type \ref{expinttower:b1}.
\end{proof}
This comment is not used later, but in the above lemma, we can replace the assumption that $M$ is residue constant closed (so $C_{\res(M)} = \res(C_{M})$) with $C_{\res(M)} = C_{\res(K)}$.
In the final argument, instead of $c \in C_M$ we have $u \in K$ with $u \asymp 1$ and $u' \prec 1$, so we also replace $s_\mu$ with $s_\mu-u^\dagger$.

\begin{cor}\label{expintclosuremin}
Suppose that $L$ is an expint-closed pre-$H$-field extension of~$K$.
\begin{enumerate}
    \item\label{expintclosuremin1} If $L$ is an expint-closure of $K$, then no proper differential subfield of $L$ containing $K$ is expint-closed.
    \item\label{expintclosuremin2} Suppose that $\bm k$ is expint-closed, and that $L$ has gap~$0$ and is residue constant closed. If no proper differential subfield of $L$ containing $K$ is expint-closed, then $L$ is an expint-closure of~$K$.
\end{enumerate}
\end{cor}
\begin{proof}
For \ref{expintclosuremin1}, if $L$ is an expint-closure of $K$, then no proper differential subfield of $L$ containing $K$ is expint-closed by Lemmas~\ref{adh10.6.6o} and \ref{adh10.6.9o}.

For \ref{expintclosuremin2}, suppose that no proper differential subfield of $L$ containing $K$ is expint-closed.
Take an expint-tower $(K_\lambda)_{\lambda\les\mu}$ on $K$ in $L$ that is maximal in $L$.
Since $\bm k$ is real closed, $\bm k_\mu=\bm k$, and hence $\bm k_\mu$ has exponential integration.
Then $(K_\lambda)_{\lambda\les\mu}$ is a maximal expint-tower on $K$ by Lemma~\ref{adh10.6.15o}.
By Lemma~\ref{adh10.6.14o}, $K_\mu$ is expint-closed and hence equal to~$L$.
\end{proof}

\begin{lem}\label{expintclosureembed}
Let $(K_\lambda)_{\lambda\les\mu}$ be an expint-tower on $K$.
Then any embedding of $K$ into a residue constant closed, expint-closed pre-$H$-field extension $M$ of $K$ with gap~$0$ extends to an embedding of $K_\mu$.
If $K$ is residue constant closed and $\bm k$ is expint-closed, then any two residue constant closed expint-closures of $K$ that are pre-$H$-field extensions of $K$ with gap~$0$ are isomorphic over $K$.
\end{lem}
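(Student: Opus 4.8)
The plan is to prove the first assertion by transfinite induction along the tower $(K_\lambda)_{\lambda\le\mu}$, extending a given embedding $i_0\colon K\to M$ to embeddings $i_\lambda\colon K_\lambda\to M$ of ordered valued differential fields, and then to deduce the second assertion formally once I observe that a residue constant closed expint-closure with gap~$0$ is itself the top of a maximal expint-tower on $K$.

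For the induction, limit stages are handled by taking unions. At a successor stage where $K_{\lambda+1}=K_\lambda^{\rc}$, I would use that $M$ is real closed with convex valuation ring: the real closure of $i_\lambda(K_\lambda)$ formed inside $M$ is an ordered-field copy of $K_\lambda^{\rc}$, and by the uniqueness of the convex-valuation-ring extension (\cite[Corollary~3.5.18]{adamtt}) and of the derivation extension (\cite[Lemma~1.9.2]{adamtt}) this ordered-field embedding automatically respects both the valuation and the derivation. At a successor stage of type~(b1), exponential integration of $M$ gives $z\in M^{\times}$ with $z^\dagger=i_\lambda(s_\lambda)$; since $M$ has gap~$0$ and $i_\lambda(s_\lambda)\prec1$, one gets $z\asymp1$, hence $z'\asymp i_\lambda(s_\lambda)\prec1$, so $\overline z\in C_{\bm k_M}$, and residue constant closedness of $M$ yields $c\in C_M$ with $c\sim z$; after replacing $z$ by $z/c$ one may assume $z\sim1$ while keeping $z^\dagger=i_\lambda(s_\lambda)$. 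Since $s_\lambda\notin(K_\lambda^{\times})^\dagger$ and $i_\lambda(K_\lambda)$ is real closed, $z$ is transcendental over $i_\lambda(K_\lambda)$, so $i_\lambda(K_\lambda)(z)\subseteq M$ is a pre-$H$-field extension of $i_\lambda(K_\lambda)$ with $z\sim1$; the uniqueness in Lemma~\ref{adh10.5.18} then shows $y_\lambda\mapsto z$ extends $i_\lambda$ to an embedding $i_{\lambda+1}\colon K_{\lambda+1}\to M$ of ordered valued differential fields. A successor stage of type~(b2) is entirely analogous using Lemma~\ref{adh10.5.20}: exponential integration gives $z\in M^{\times}$ with $z^\dagger=i_\lambda(s_\lambda)$, which I take positive; $z$ is transcendental over the real closed field $i_\lambda(K_\lambda)$, and the hypothesis $s_\lambda-a^\dagger\succ1$ for all $a\in K_\lambda^{\times}$ becomes, using $\Psi^{\downarrow}=\Gamma^{<}$ in the gap-$0$ field $i_\lambda(K_\lambda)$, exactly the hypothesis of Lemma~\ref{adh10.5.20}, whose uniqueness provides $i_{\lambda+1}$. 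Iterating yields an embedding $K_\mu\to M$ extending $i_0$.

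For the second assertion, let $L_1$ and $L_2$ be residue constant closed expint-closures of $K$ that are pre-$H$-fields with gap~$0$. Inside $L_1$ I would choose an expint-tower $(K_\lambda)_{\lambda\le\mu}$ on $K$ that is maximal in $L_1$; since $\bm k$ is expint-closed, $\bm k_\mu=\bm k$ (Lemma~\ref{adh10.6.13o}(iii)) has exponential integration, so $(K_\lambda)_{\lambda\le\mu}$ is a maximal expint-tower on $K$ by Lemma~\ref{adh10.6.15o}, whence $K_\mu$ is expint-closed by Lemma~\ref{adh10.6.14o}, and Corollary~\ref{expintclosuremin}(i) forces $K_\mu=L_1$. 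Thus $L_1$ is the top of a maximal expint-tower on $K$, and the first assertion extends the inclusion $K\hookrightarrow L_2$ to an embedding $\phi\colon L_1\to L_2$ over~$K$; its image is a differential subfield of $L_2$ containing $K$ and isomorphic to $L_1$, hence expint-closed, so $\phi(L_1)=L_2$ by Corollary~\ref{expintclosuremin}(i) applied to $L_2$, and $\phi$ is an isomorphism over~$K$. I expect the only genuine friction to be in the type-(b1) step, namely the passage from $z\asymp1$ to $z\sim1$ (which is precisely where residue constant closedness of the target is used) together with checking that the subfield $i_\lambda(K_\lambda)(z)\subseteq M$ carries the canonical structure so that the uniqueness clauses of Lemmas~\ref{adh10.5.18} and~\ref{adh10.5.20} apply; the rest parallels the constructions of Lemmas~\ref{adh10.6.14o} and~\ref{adh10.6.15o}, and the second assertion is then purely formal.
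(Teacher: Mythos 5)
Your proof is correct and follows essentially the same route as the paper: transfinite induction up the tower, handling the real-closure step via the canonical derivation/convex-valuation extensions and the type-(b1) and type-(b2) steps exactly as the paper does (using residue constant closedness of $M$ to adjust $z\asymp 1$ to $z\sim 1$ for (b1), and invoking the uniqueness in Lemmas~\ref{adh10.5.18} and~\ref{adh10.5.20}). For the second assertion the paper appeals to the proof of Corollary~\ref{expintclosure} to build a residue constant closed expint-closure $K_\mu$ and embeds it into each $L_i$, whereas you build a maximal expint-tower inside $L_1$ and use Lemma~\ref{adh10.6.15o} to show its top equals $L_1$ before embedding into $L_2$; both are correct, and your route is a mild variant that incidentally does not use the hypothesis that $K$ itself is residue constant closed.
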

\begin{proof}
Let $M$ be a residue constant closed, expint-closed pre-$H$-field with gap~$0$.
We prove that for $\lambda<\mu$ any embedding $K_\lambda \to M$ extends to an embedding $K_{\lambda+1} \to M$, which yields the result by induction.
Suppose that $i \colon K_\lambda \to M$ is an embedding.
If $K_{\lambda+1}$ is the real closure of $K_\lambda$, then we may extend $i$ to~$K_{\lambda+1}$.

So suppose that $K_\lambda$ is real closed and we have $s_\lambda \in K_{\lambda}$ and $y_\lambda \in K_{\lambda+1}\setminus K_{\lambda}$ with $K_{\lambda+1}=K_{\lambda}(y_\lambda)$, $y_\lambda^\dagger = s_\lambda$, $y_\lambda \sim 1$, $s_\lambda \prec 1$, and $s_\lambda \neq a^\dagger$ for all $a \in K_\lambda^{\x}$.
Take $z \in M$ with $z^\dagger = i(s_\lambda)$.
Hence $z \asymp 1$ and $\overline{z} \in C_{\res(M)}$, so we have $c \in C_M$ with $z \sim c$.
By the uniqueness of Lemma~\ref{adh10.5.18}, we may extend $i$ to an embedding of $K_\lambda(y_\lambda)$ into $M$ sending $y_\lambda$ to~$z/c$.

Now suppose that $K_\lambda$ is real closed and we have $s_\lambda \in K_\lambda$ and $y_\lambda \in K_{\lambda+1}\setminus K_\lambda$ with $K_{\lambda+1}=K_{\lambda}(y_\lambda)$, $y_\lambda^\dagger = s_\lambda$, $s_\lambda<0$, $y_\lambda>0$, and $s_\lambda-a^\dagger \succ 1$ for all $a \in K^{\x}_\lambda$.
Take $z \in M$ with $z^\dagger = i(s_\lambda)$; we may assume that $z>0$.
Then by the uniqueness of Lemma~\ref{adh10.5.20}, we can extend $i$ to an embedding of $K_\lambda(y_\lambda)$ into $M$ sending $y_\lambda$ to~$z$.

The second statement follows from the first by Corollary~\ref{expintclosuremin}\ref{expintclosuremin1} and the proof of Corollary~\ref{expintclosure}.
\end{proof}

Combining these results with Proposition~\ref{rcclosure} yields the following theorem, although for the construction of differential-Hensel-Liouville closures in Theorem~\ref{dhlclosure}, Corollary~\ref{expintclosure} and Lemma~\ref{expintclosureembed} already suffice.
\begin{thm}\label{rcexpintclosure}
Suppose that $\bm k$ is expint-closed.
Then $K$ has a pre-$H$-field extension $L$ with gap~$0$ such that:
\begin{enumerate}
	\item\label{rcexpintclosure1} $L$ is a residue constant closed, expint-closed extension of~$K$;
	\item\label{rcexpintclosure2} $L$ embeds over $K$ into any residue constant closed, expint-closed pre-$H$-field extension of $K$ with gap~$0$;
	\item\label{rcexpintclosure3} $L$ has no proper differential subfield containing $K$ that is residue constant closed and expint-closed.
\end{enumerate}
\end{thm}
\begin{proof}
By Proposition~\ref{rcclosure}, let $K_0$ be the residue constant closure of $K$.
Taking the top of a maximal expint-tower on $K_0$ yields an expint-closure $L$ of $K_0$ that is residue constant closed as in Corollary~\ref{expintclosure}.
For \ref{rcexpintclosure2}, let $M$ be a pre-$H$-field extension of $K$ with gap~$0$ that is residue constant closed and expint-closed.
Then $K_0$ embeds uniquely into $M$ over $K$, so by Lemma~\ref{expintclosureembed} we can extend this to an embedding of $L$.
For \ref{rcexpintclosure3}, suppose that $L_0 \supseteq K$ is a differential subfield of $L$ that is residue constant closed and expint-closed.
Then $L_0 \supseteq K_0$, and hence $L_0=L$ by Corollary~\ref{expintclosuremin}\ref{expintclosuremin1}.
\end{proof}
Let $L$ be as above.
Then any pre-$H$-field extension of $K$ with gap~$0$ satisfying \ref{rcexpintclosure1} and \ref{rcexpintclosure2} is isomorphic to $L$ over~$K$ by  \ref{rcexpintclosure3}.
Also, $L$ is a Liouville extension\footnote{The definition is similar to that of expint-extensions in \S\ref{sec:dhl:expint} except that we also allow $t_i' \in E(t_1, \dots, t_{i-1})$.} of $K$, since $K_0$ is a Liouville extension of $K$ by construction and expint-extensions are Liouville extensions.

\subsection{Differential-Hensel-Liouville closures}
\begin{ass}We continue to assume in this subsection that the pre-$H$-field $K$ has gap~$0$.\end{ass}
\begin{defn}
We call $K$ \deft{differential-Hensel-Liouville closed} (slightly shorter: \deft{$\d$-Hensel-Liouville closed}) if it is $\d$-henselian and expint-closed.
We call a pre-$H$-field extension $L$ of $K$ a \deft{differential-Hensel-Liouville closure} (slightly shorter: \deft{$\d$-Hensel-Liouville closure}) of $K$ if it is $\d$-Hensel-Liouville closed and embeds over $K$ into every $\d$-Hensel-Liouville closed pre-$H$-field extension of~$K$.
\end{defn}
Note that if $K$ is $\d$-henselian, then $K$ is also closed under integration in the sense that $\der$ is surjective \cite[Lemma~7.1.8]{adamtt}, hence the use of ``Liouville'' in the terms just defined.

To build $\d$-Hensel-Liouville closures, we use the fact that if $F$ is an asymptotic valued differential field with small derivation and linearly surjective differential residue field, then it has a (unique) \deft{differential-henselization} (\deft{$\d$-henselization} for short) $F^{\dh}$ by \cite[Theorem~3.7]{pc-dh}.
For such $F$, $F^{\dh}$ is an immediate asymptotic $\d$-algebraic extension of $F$ that is $\d$-henselian and embeds over $F$ into every $\d$-henselian asymptotic extension of $F$; if $F$ is a pre-$H$-field, then $F^{\dh}$ is too and embeds (as an \emph{ordered} valued differential field) into every $\d$-henselian pre-$H$-field extension of~$F$.

\begin{thm}\label{dhlclosure}
Suppose that $\bm k$ is expint-closed and linearly surjective.
Then $K$ has a $\d$-Hensel-Liouville closure~$K^{\dhl}$.
\end{thm}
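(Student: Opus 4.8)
The plan is to realize $K^{\dhl}$ as the union of a countable chain
$K = K_0 \subseteq K_1 \subseteq K_2 \subseteq \cdots$
of pre-$H$-fields with gap~$0$ and residue field $\bm k$, obtained by alternating the two closure operations already available. At odd stages I would pass to a differential-henselization: given $K_{2n}$, a pre-$H$-field with gap~$0$ whose residue field $\bm k$ is linearly surjective, set $K_{2n+1} \coloneqq K_{2n}^{\dh}$ via \cite[Theorem~3.7]{dh}; this is an immediate $\d$-algebraic extension, hence again a pre-$H$-field (and, having the same asymptotic couple, again of gap~$0$) with residue field $\bm k$, and it is $\d$-henselian. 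At even stages I would apply Theorem~\ref{rcexpintclosure}: since $\bm k = \bm k_{K_{2n+1}}$ is expint-closed, $K_{2n+1}$ has a pre-$H$-field extension $K_{2n+2}$ with gap~$0$ that is residue constant closed and expint-closed and embeds over $K_{2n+1}$ into any residue constant closed, expint-closed pre-$H$-field extension of $K_{2n+1}$ with gap~$0$; inspecting its construction (the residue constant closure of Proposition~\ref{rcclosure} is immediate, and then Lemma~\ref{adh10.6.13o}(iii) applies), its residue field is the real closure of $\bm k$, i.e.\ $\bm k$ again. So every $K_n$ is a pre-$H$-field with gap~$0$ and residue field $\bm k$; a chain of length $\omega$ suffices, and no Zorn or cardinality bookkeeping is needed.

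Setting $L \coloneqq \bigcup_n K_n$, I would check that $L$ is a pre-$H$-field with gap~$0$ and residue field $\bm k$ (all three classes are preserved under increasing unions), and that it is $\d$-Hensel-Liouville closed. This is the usual ``defects are finitary'' argument: $L$ is the directed union of the real closed fields $K_{2k+2}$, hence real closed; any $s \in L$ lies in some $K_{2k+2}$, which has exponential integration, so $L$ does too, and thus $L$ is expint-closed. For $\d$-henselianity, $\bm k_L = \bm k$ is linearly surjective and $L$ has small derivation (it has gap~$0$), giving (DH1); and for (DH2) the coefficients of any admissible $P$ lie in some $K_{2n}$, where the hypotheses $P_0 \prec 1$, $P_1 \asymp 1$ persist, so the required zero is found in the $\d$-henselian field $K_{2n+1} = K_{2n}^{\dh}$ and lies in $L$. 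Hence $L$ is $\d$-henselian and expint-closed, i.e.\ $\d$-Hensel-Liouville closed, and a pre-$H$-field extension of $K$ with gap~$0$.

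It remains to verify the semi-universal property, which is where the one genuinely new point --- and the main, if modest, obstacle --- enters. Let $M$ be any $\d$-Hensel-Liouville closed pre-$H$-field extension of $K$; I would build a coherent chain of embeddings $j_n \colon K_n \to M$ over $K$ and take $j \coloneqq \bigcup_n j_n$. Extending $j_{2n}$ to $K_{2n+1} = K_{2n}^{\dh}$ is automatic, since $M$ is a $\d$-henselian pre-$H$-field extension of $j_{2n}(K_{2n})$ and $\d$-henselizations embed over the base into every such extension (in the ordered setting, via \cite[Lemma~10.5.8]{adamtt}). To extend $j_{2n+1}$ to $K_{2n+2}$ I want the embedding property of Theorem~\ref{rcexpintclosure}(ii), which requires $M$ to be residue constant closed, expint-closed, and of gap~$0$: expint-closedness is part of the hypothesis; $M$ has gap~$0$ because it has small derivation (being $\d$-henselian) and the derivation induced on $\bm k_M$, which contains the differential subfield $\bm k$ with nontrivial derivation, is nontrivial; but residue constant closedness must be argued. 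The key observation --- which I expect to be the crux of the whole proof --- is that \emph{every $\d$-henselian valued differential field is residue constant closed}: it is henselian by applying (DH2) to ordinary polynomials, and given $\bar c \in C_{\bm k_M}$ with a lift $a \in \ca O_M$ one has $\res(a') = \der(\bar c) = 0$, so $a' \prec 1$, and (DH2) applied to $a' + Y'$ produces $y \prec 1$ with $(a+y)' = 0$, whence $a + y \in C_M$ lifts $\bar c$. With this in hand, $M$ is a legitimate target for Theorem~\ref{rcexpintclosure}(ii), $j_{2n+1}$ extends to $K_{2n+2}$, and $j = \bigcup_n j_n \colon L \to M$ is the desired embedding over $K$. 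Setting $K^{\dhl} \coloneqq L$ then finishes the proof.
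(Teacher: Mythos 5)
Your proposal is correct and follows essentially the same route as the paper: build $K^{\dhl}$ as an $\omega$-chain alternating $\d$-henselization with expint-closure, note that $\d$-henselian fields are residue constant closed so that the embedding property of the expint-closure (Corollary~\ref{expintclosure} / Lemma~\ref{expintclosureembed}, or equivalently your invocation of Theorem~\ref{rcexpintclosure}) applies with target $M$, and extend embeddings stage by stage. The paper cites \cite[Lemma~9.4.10]{adamtt} for the ``$\d$-henselian $\Rightarrow$ residue constant closed'' step where you give a direct (correct) argument via (DH2), and it leaves the routine verification that the union is $\d$-Hensel-Liouville closed implicit; these are only cosmetic differences.
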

\begin{proof}
We use below that any $\d$-henselian asymptotic field is residue constant closed by \cite[Lemma~9.4.10]{adamtt}.
Define a sequence of pre-$H$-field extensions of $K$ with gap~$0$ as follows.
Set $K_0 \coloneqq K$.
For $n \ges 1$, if $n$ is odd, let $K_n$ be the $\d$-henselization of $K_{n-1}$, and if $n$ is even, let $K_n$ be the expint-closure of $K_{n-1}$ from Corollary~\ref{expintclosure}.
Note that $\bm k_{K_n} = \bm k$ for all $n$.
We set $K^{\dhl} \coloneqq \bigcup_{n} K_n$ and show that $K^{\dhl}$ is a $\d$-Hensel-Liouville closure of~$K$.

Let $L$ be a pre-$H$-field extension of $K$ that is $\d$-henselian and expint-closed.
We show by induction on $n$ that we can extend any embedding $K_n \to L$ to an embedding $K_{n+1} \to L$, so suppose that we have an embedding $i \colon K_n \to L$.
If $n$ is even, then $K_{n+1}$ is the $\d$-henselization of $K_n$, so we may extend $i$ to an embedding $K_{n+1} \to L$.
If $n$ is odd, then $K_n$ is $\d$-henselian and $K_{n+1}$ is the expint-closure of $K_n$, so we can extend $i$ to an embedding $K_{n+1} \to L$ by Lemma~\ref{expintclosureembed}.
\end{proof}

Note that $K^{\dhl}$ is a $\d$-algebraic extension of $K$ with the same residue field.
In the next two results, adapted from \cite[\S16.2]{adamtt}, we show that $K^{\dhl}$ is the unique, up to isomorphism over $K$, $\d$-Hensel-Liouville closure of~$K$.

\begin{lem}\label{dhldalg}
Suppose that $\bm k$ is expint-closed and linearly surjective.
Let $i \colon K^{\dhl} \to L$ be an embedding into a pre-$H$-field $L$ with gap~$0$ such that $\res\big(i(K^{\dhl})\big)=\res(L)$.
Then
\[
i(K^{\dhl})\ =\ i(K)^{\dalg}\ \coloneqq\ \{ f \in L : f\ \text{is $\d$-algebraic over}\ i(K) \}.
\]
\end{lem}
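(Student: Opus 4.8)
The plan is to prove the two inclusions $i(K^{\dhl}) \subseteq i(K)^{\dalg}$ and $i(K)^{\dalg} \subseteq i(K^{\dhl})$ separately. The first is immediate: as noted after Theorem~\ref{dhlclosure}, $K^{\dhl}$ is a $\d$-algebraic extension of $K$, so every element of $i(K^{\dhl})$ is $\d$-algebraic over $i(K)$, whence $i(K^{\dhl}) \subseteq i(K)^{\dalg}$.

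For the reverse inclusion I would take $f \in L$ that is $\d$-algebraic over $i(K)$, hence over $i(K^{\dhl})$, and consider the differential subfield $i(K^{\dhl})\langle f\rangle$ of $L$. First I would record the routine point that, as a differential subfield of the pre-$H$-field $L$ with gap~$0$, the field $i(K^{\dhl})\langle f\rangle$ is again a pre-$H$-field with gap~$0$: smallness of the derivation, asymptoticity, $H$-type, convexity of the valuation ring, the condition $g^\dagger \succ 1$ for $g \prec 1$, and the axioms (PD1), (PD2), (PH2), (PH3) are all inherited by intersecting $\ca O_L$, $\cao_L$, $C_L$ with the subfield. Next, since $\res$ is monotone under field inclusion and $i(K^{\dhl}) \subseteq i(K^{\dhl})\langle f\rangle \subseteq L$, the hypothesis $\res\big(i(K^{\dhl})\big) = \res(L)$ forces $\res\big(i(K^{\dhl})\langle f\rangle\big) = \res\big(i(K^{\dhl})\big)$. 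Thus $i(K^{\dhl})\langle f\rangle$ is a $\d$-algebraic $H$-asymptotic extension of $i(K^{\dhl})$ with gap~$0$ and the same residue field.

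Finally I would invoke Theorem~\ref{dalgmaxupgrade}: since $i\colon K^{\dhl} \to L$ is an isomorphism onto its image as ordered valued differential fields, $i(K^{\dhl})$ is $\d$-henselian, $H$-asymptotic, has exponential integration, has gap~$0$, and has divisible value group (being real closed), so it has no proper $\d$-algebraic $H$-asymptotic extension with gap~$0$ and the same residue field. Hence $i(K^{\dhl})\langle f\rangle = i(K^{\dhl})$, i.e.\ $f \in i(K^{\dhl})$, giving $i(K)^{\dalg} \subseteq i(K^{\dhl})$ and completing the proof. I do not expect a serious obstacle: the only care needed is the inheritance bookkeeping in the second paragraph, after which the statement is essentially a direct application of the maximality theorem of \S\ref{sec:extac}.
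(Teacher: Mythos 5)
Your proof is correct and follows essentially the same route as the paper: both decompose into the easy inclusion from the $\d$-algebraicity of $K^{\dhl}$ over $K$ and then settle the converse by a direct application of Theorem~\ref{dalgmaxupgrade}, after observing that the intermediate field inherits the pre-$H$-field/gap-$0$ structure and has the same residue field by the sandwiching $i(K^{\dhl}) \subseteq \cdot \subseteq L$. The only (inconsequential) difference is that you apply the maximality theorem element by element to $i(K^{\dhl})\langle f\rangle$, while the paper applies it once to the full $i(K)^{\dalg}$.
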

\begin{proof}
We have $i(K^{\dhl}) \subseteq i(K)^{\dalg}$ since $K^{\dhl}$ is a $\d$-algebraic extension of $K$.
For the other direction, note that $i(K^{\dhl})$ is a $\d$-henselian, expint-closed pre-$H$-subfield of $i(K)^{\dalg}$, so $i(K^{\dhl})=i(K)^{\dalg}$ by Theorem~\ref{dalgmaxupgrade}.
\end{proof}
Hence for $K$ as in the lemma above, any $\d$-algebraic extension of $K$ that is a $\d$-henselian, expint-closed pre-$H$-field with the same residue field as $K$ is isomorphic to $K^{\dhl}$ over $K$, and is thus a $\d$-Hensel-Liouville closure of~$K$.

\begin{cor}\label{dhlclosuremin}
Suppose that $\bm k$ is expint-closed and linearly surjective.
Then $K^{\dhl}$ has no proper differential subfield containing $K$ that is $\d$-Hensel-Liouville closed.
Thus any $\d$-Hensel-Liouville closure of $K$ is isomorphic to $K^{\dhl}$ over~$K$.
\end{cor}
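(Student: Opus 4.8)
The plan is to reduce both assertions to the maximality theorem (Theorem~\ref{dalgmaxupgrade}), in the same spirit as the proof of Lemma~\ref{dhldalg}. For the first assertion, I would start from a differential subfield $L_0$ of $K^{\dhl}$ with $K \subseteq L_0$ that, with the ordering and valuation induced from $K^{\dhl}$, is $\d$-Hensel-Liouville closed, and record that it satisfies the hypotheses of Theorem~\ref{dalgmaxupgrade}. Indeed, $L_0$ is $\d$-henselian and expint-closed by assumption; being expint-closed it is real closed, so $\Gamma_{L_0}$ is divisible, and it has exponential integration; as a differential subfield of the pre-$H$-field $K^{\dhl}$ containing $K$ it is again a pre-$H$-field (hence $H$-asymptotic), and the properties ``small derivation'' and ``$f^\dagger \succ 1$ for $f \prec 1$'' are inherited from $K^{\dhl}$, so $L_0$ has gap~$0$; finally $\bm k = \bm k_K \subseteq \bm k_{L_0} \subseteq \bm k_{K^{\dhl}} = \bm k$ gives $\bm k_{L_0} = \bm k$.

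Next I would note that $K^{\dhl}$ is an $H$-asymptotic extension of $L_0$ that has gap~$0$, has residue field $\bm k = \bm k_{L_0}$, and is $\d$-algebraic over $L_0$ (it is $\d$-algebraic over $K$, hence over the intermediate field $L_0$). Applying Theorem~\ref{dalgmaxupgrade} to $L_0$ then forces $K^{\dhl} = L_0$. Thus no proper differential subfield of $K^{\dhl}$ containing $K$ is $\d$-Hensel-Liouville closed.

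For the second assertion, let $L$ be any $\d$-Hensel-Liouville closure of $K$. Since $K^{\dhl}$ is itself a $\d$-Hensel-Liouville closed pre-$H$-field extension of $K$ by Theorem~\ref{dhlclosure}, the defining embedding property of $L$ provides an embedding $j \colon L \to K^{\dhl}$ over $K$. Then $j(L)$, carrying the structure induced from $K^{\dhl}$, is isomorphic to $L$ as an ordered valued differential field, hence is a $\d$-Hensel-Liouville closed differential subfield of $K^{\dhl}$ containing $K$; by the first assertion $j(L) = K^{\dhl}$, so $j$ is an isomorphism over $K$, as desired.

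The whole argument is short once Theorem~\ref{dalgmaxupgrade} is available, so I do not expect a genuine obstacle. The only point that needs attention is the bookkeeping in the first paragraph: checking that the sub-object $L_0$ really does inherit gap~$0$, a divisible value group, and the residue field $\bm k$, which is exactly what licenses the application of the maximality theorem to $L_0$ rather than to $K$.
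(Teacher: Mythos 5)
Your proof is correct, and it does take a mildly different route from the paper's for the first assertion. The paper first uses the embedding property of $K^{\dhl}$ (as a $\d$-Hensel-Liouville closure) to produce an embedding $i \colon K^{\dhl} \to L_0$ over $K$, viewed as a self-embedding of $K^{\dhl}$, and then invokes Lemma~\ref{dhldalg} to conclude $i(K^{\dhl}) = K^{\dhl}$; since $i(K^{\dhl}) \subseteq L_0$, this forces $L_0 = K^{\dhl}$. You instead apply Theorem~\ref{dalgmaxupgrade} directly with $L_0$ in the role of the ground field and $K^{\dhl}$ as the proper $\d$-algebraic $H$-asymptotic extension with gap~$0$ and the same residue field, after verifying that $L_0$ inherits the relevant hypotheses (being a pre-$H$-field, having gap~$0$, divisible value group, residue field $\bm k$) from $K^{\dhl}$ — a check you carry out correctly, and which is where the real content of your variant lies. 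Both arguments ultimately rest on Theorem~\ref{dalgmaxupgrade}; yours is slightly more economical in that it bypasses both Lemma~\ref{dhldalg} and the embedding property of $K^{\dhl}$ for the first assertion, at the cost of the inheritance bookkeeping. Your treatment of the second assertion (embed the given closure $L$ into $K^{\dhl}$ and apply the first part to $j(L)$) is exactly the intended argument behind the paper's terse ``Thus.''
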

\begin{proof}
If $L \supseteq K$ is a $\d$-Hensel-Liouville closed differential subfield of $K^{\dhl}$, then we have an embedding $i \colon K^{\dhl} \to L$ over $K$.
Viewing this as an embedding into $K^{\dhl}$, by Lemma~\ref{dhldalg} we have $K^{\dhl} = i(K^{\dhl})$, so $K^{\dhl}=L$.
\end{proof}

\section{Main results}\label{sec:main}
\begin{ass}
In this section, $K$ is a pre-$H$-field with gap~$0$.
\end{ass}
\subsection{Two-sorted results}\label{sec:maintwo}
We first combine earlier results to establish the key embedding lemma.
For an ordered set $S$ we denote the cofinality of $S$ by $\cf(S)$.
In \textit{Case~2} of the next lemma, recall from \S\ref{sec:16.1} the brief discussion of pc-sequences and the notion ``$\d$-algebraically maximal'', along with its connection to pc-sequences.

\begin{lem}\label{qe:sameres}
Suppose that $K$ is $\d$-Hensel-Liouville closed, and let $E$ be a pre-$H$-subfield of $K$ with $\res(E) = \res(K)$.
Let $L$ be a $\d$-Hensel-Liouville closed pre-$H$-field such that $L$ is $|K|^+$-saturated as an ordered set and $\cf(\Gamma_L^<)>|\Gamma|$.
Then any embedding $E \to L$ can be extended to an embedding $K \to L$.
\end{lem}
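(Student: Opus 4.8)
The plan is a maximality argument built from the embedding lemmas already established. First I would apply Zorn's lemma to obtain a pre-$H$-subfield $E^*$ with $E \subseteq E^* \subseteq K$ that is maximal subject to the existence of an embedding $j \colon E^* \to L$ extending the given one; the goal is then to prove $E^* = K$. Since $\bm k_E = \bm k_K$ and $E \subseteq E^* \subseteq K$, the residue field $\bm k_{E^*}$ equals $\bm k_K$, which is linearly surjective (as $K$ is $\d$-henselian) and expint-closed, that is, real closed with exponential integration (the residue field inherits real closedness, and it inherits exponential integration using that $K$ has gap~$0$, which forces any witness $z$ with $z^\dagger \in \ca O_K$ to satisfy $z \asymp 1$). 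Hence Theorem~\ref{dhlclosure} applies to $E^*$, yielding a $\d$-Hensel-Liouville closure $(E^*)^{\dhl}$; by its embedding property, $(E^*)^{\dhl}$ embeds over $E^*$ into $K$ and, via $j$, into $L$. Identifying $(E^*)^{\dhl}$ with its image in $K$ and composing, the maximality of $E^*$ forces $(E^*)^{\dhl} = E^*$. Thus \emph{$E^*$ is itself $\d$-Hensel-Liouville closed}: in particular $\d$-henselian, real closed, with exponential integration and gap~$0$.

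Now suppose, toward a contradiction, that $E^* \neq K$. The first step is to rule out immediate extensions inside $K$, namely that there is no $f \in K \setminus E^*$ with $E^*\langle f \rangle$ immediate over $E^*$. Such an $f$ would be a pseudolimit of a divergent pc-sequence $(a_\rho)$ in $E^*$ (the fact recorded in \S\ref{sec:prelim}); since $E^*$ is $\d$-henselian and asymptotic it is $\d$-algebraically maximal by \cite[Theorem~3.6]{dh}, so by \cite[Lemma~6.9.3]{adamtt} the pc-sequence is of $\d$-transcendental type, whence by \cite[Lemma~6.9.1]{adamtt} one may send $f$ to any pseudolimit of $(a_\rho)$ in any pre-$H$-field extension of $E^*$ (using \cite[Lemma~10.5.8]{adamtt} to pin down the ordering). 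Reindexing so that $(a_\rho)$ has length at most $|K|$, the pc-sequence $(j(a_\rho))$ in $L$ determines a nested family of at most $|K|$ closed balls (nested because $v(a_{\rho+1}-a_\rho)$ increases along a pc-sequence), which by convexity of the valuation ring are intervals of $L$; as $L$ is $|K|^+$-saturated as an ordered set, this family has a common point $g$, and one checks that $g$ is a pseudolimit of $(j(a_\rho))$ in $L$. Then $j$ extends to $E^*\langle f \rangle \to L$ with $f \mapsto g$, contradicting maximality.

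With immediate extensions excluded, pick any $f \in K \setminus E^*$. Since the residue field does not grow, $\Gamma_{E^*} \subsetneq \Gamma_{E^*\langle f \rangle}$, and we are exactly in the setting of \S\ref{sec:extac} with base $E^*$ and ambient field $K$. If $\Gamma_{E^*}^<$ is cofinal in $\Gamma_{E^*\langle f \rangle}^<$, then by $|K|^+$-saturation of $L$ as an ordered set there is $g \in L$ realizing the same cut over $j(E^*)$ as $f$ realizes over $E^*$, and Proposition~\ref{realizecutembed} extends $j$ with $f \mapsto g$. Otherwise $\Gamma_{E^*}^<$ is not cofinal in $\Gamma_{E^*\langle f \rangle}^<$, so there is $h \in E^*\langle f \rangle^{>}$ with $\Gamma_{E^*}^< < v(h) < 0$, and in particular $h \notin E^*$. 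Here the hypothesis $\cf(\Gamma_L^<) > |\Gamma| \ges |\Gamma_{E^*}|$ ensures that $\Gamma_{j(E^*)}^<$ is not cofinal in $\Gamma_L^<$, so we may pick $g \in L^{>}$ with $\Gamma_{j(E^*)}^< < v_L(g) < 0$, and Lemma~\ref{notcofinal} extends $j$ with $h \mapsto g$. In both cases $j$ extends properly, contradicting maximality; therefore $E^* = K$, as desired.

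The main obstacle will be the immediate-extension case: translating the pseudoconvergence of $(j(a_\rho))$ into an honest nested family of intervals in $L$, invoking order-saturation to produce a genuine pseudolimit there, and then verifying that the resulting field isomorphism $E^*\langle f \rangle \to j(E^*)\langle g \rangle$ is compatible with the orderings and valuations, hence is a pre-$H$-field embedding. The remaining cases are direct appeals to the embedding lemmas of \S\ref{sec:extac}, once the two saturation hypotheses are used to locate the correct target element in $L$.
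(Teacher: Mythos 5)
Your proof is correct and follows essentially the same route as the paper's: the Zorn maximality framing is a cosmetic repackaging of the paper's one-step-extension argument, and the case analysis (not $\d$-Hensel-Liouville closed $\Rightarrow$ pass to the $\d$-Hensel-Liouville closure; immediate extension $\Rightarrow$ pc-sequence argument; otherwise the cofinal/non-cofinal dichotomy feeding into Proposition~\ref{realizecutembed} and Lemma~\ref{notcofinal}) is the same as in the paper. The one imprecision is the step you yourself flag: the valuation ``balls'' $\{x : v_L(x - j(a_\rho)) \ges \gamma_\rho\}$ are convex but are not intervals with endpoints in $L$, so order-saturation does not apply to them directly. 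The paper routes around this by citing \cite[Lemma~2.4.2]{adamtt}; your argument can be repaired by instead intersecting the genuine intervals $\bigl(j(a_\rho) - 2|j(c_\rho)|,\ j(a_\rho) + 2|j(c_\rho)|\bigr)$ with $c_\rho \coloneqq a_{\rho+1}-a_\rho$, which have endpoints in $j(E^*)$, are nested after some point (since $c_{\rho+1}\prec c_\rho$), and whose intersection is exactly the set of pseudolimits of $(j(a_\rho))$ because $z$ is a pseudolimit iff eventually $z-j(a_\rho)\sim j(c_\rho)$, which by convexity gives the two-sided bound $|z-j(a_\rho)|<2|j(c_\rho)|$, and conversely that bound forces $v_L(z-j(a_\rho))\ges\gamma_\rho$.
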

\begin{proof}
Let $i \colon E \to L$ be an embedding.
We may assume that $E \neq K$.
It suffices to show that $i$ can be extended to an embedding $F \to L$ for some pre-$H$-subfield $F$ of $K$ properly containing~$E$.

First, suppose that $\Gamma_E^<$ is not cofinal in $\Gamma^<$ and let $f \in K^>$ with $\Gamma_E^< < vf < 0$.
By the cofinality assumption on $\Gamma_L^<$, take $g \in L^>$ with $\Gamma_{i(E)}^< < v_L(g) < 0$.
Then we extend $i$ to an embedding $E \langle f \rangle \to L$ sending $f \mapsto g$ by Lemma~\ref{notcofinal}.

Now suppose that $\Gamma_E^<$ is cofinal in $\Gamma^<$ and consider the following three cases.

\textit{Case~1: $E$ is not $\d$-Hensel-Liouville closed.}
From the assumptions on $K$, we get that $\res(K)$ is expint-closed and linearly surjective.
Since $\res(E) = \res(K)$, we may extend $i$ to an embedding of the $\d$-Hensel-Liouville closure of $E$ into $L$ by Theorem~\ref{dhlclosure}.

\textit{Case~2: $E$ is $\d$-Hensel-Liouville closed and $E\langle y \rangle$ is an immediate extension of $E$ for some $y \in K \setminus E$.}
Take such a $y$ and let $(a_\rho)$ be a divergent pc-sequence in $K$ with $a_\rho \pconv y$.
Since $E$ is $\d$-henselian, it is $\d$-algebraically maximal \cite[Theorem~3.6]{pc-dh}, and so $(a_\rho)$ is of $\d$-transcendental type over $E$.
By the saturation assumption on $L$ and \cite[Lemma~2.4.2]{adamtt}, we have $z \in L$ with $i(a_\rho) \pconv z$.
Then \cite[Lemma~6.9.1]{adamtt} yields a valued differential field embedding $E\langle y \rangle \to L$ sending $y \mapsto z$.
Since $E\langle y \rangle$ is an immediate extension of $E$, this is also an ordered field embedding.

\textit{Case~3: $E$ is $\d$-Hensel-Liouville closed and there is no $y \in K \setminus E$ with $E\langle y \rangle$ an immediate extension of $E$.}
Take any $f \in K \setminus E$.
By saturation, take $g \in L$ such that for all $a \in E$, we have
\[
a<f \implies i(a)<g \qquad \text{and} \qquad f<a \implies g<i(a).
\]
Then we can extend $i$ to an embedding $E\langle f \rangle \to L$ with $f \mapsto g$ by Proposition~\ref{realizecutembed}.
\end{proof}

We now suspend the convention that $\bm k$ is the ordered differential residue field of $K$, and instead let $\bm k$ be an ordered differential field such that we have a surjective ordered differential ring homomorphism $\pi \colon \ca O \to \bm k$; note that $\pi$ has kernel $\cao$, so it induces an ordered differential field isomorphism $\res(K) \to \bm k$.
Thus we continue to think of $\bm k$ as the ordered differential residue field of $K$, but below possibly equipped with additional structure.
We consider two-sorted structures of the form $\bm K = (K, \bm k; \pi_2)$ in the language $\ca L_2$, where the language on the sort $K$ is $\ca L = \{+, -, \cdot, 0, 1, \der, \prece, \les\}$, the language on the sort $\bm k$ is $\ca L_{\res} \supseteq \{+, -, \cdot, 0, 1, \der, \les\}$, and $\pi_2 \colon K^2 \to \bm k$ is defined by:
\begin{enumerate}
	\item $\pi_2(a, b) = \pi(ab^{-1})$ for $a \in K$ and $b \in K^{\x}$ with $a \prece b$;
	\item $\pi_2(a, b) = 0$ for $a,b \in K$ with $a \succ b$;
	\item $\pi_2(0, 0) = 0$.
\end{enumerate}
Note that the dominance relation $\prece$ can be axiomatized as a binary relation on $K$, without reference to $v$ or $\Gamma$ (see for example \cite[Definition~3.1.1]{adamtt} and successive remarks).
Also, the only reason for including a binary residue map here instead of the unary residue map is so we can avoid including multiplicative inversion in the language.

Fix an $\ca L_{\res}$-theory $T_{\res}$ extending the theory of ordered differential fields that are linearly surjective and expint-closed, and let $T^{\dhl}$ be the $\ca L_2$-theory whose models are structures $\bm K = (K, \bm k; \pi_2)$ as above such that $K$ is $\d$-Hensel-Liouville closed, $\Gamma\neq\{0\}$, and $\bm k \models T_{\res}$.

\subsubsection{Equivalence Theorem}
Let $\bm K = (K, \bm k; \pi_2)$ and $\bm K^* = (K^*, \bm k^*; \pi_2^*)$ be models of $T^{\dhl}$.
We aim to construct a back-and-forth system from $\bm K$ to $\bm K^*$ (when $\bm K$ and $\bm K^*$ are sufficiently saturated).
To that end, a \deft{good substructure} of $\bm K$ is an $\ca L_2$-substructure $\bm E = (E, \bm k_{\bm E})$ of $\bm K$ such that $E$ and $\bm k_E$ are fields, where we drop the $\pi_2$ from the notation.
Note that then $E$ is a pre-$H$-subfield of $K$ with gap~$0$ and also that the restriction $\pi_2 \colon E^2 \to \bm k_E$ need not be surjective.
Let $\bm E$ and $\bm E^*$ be good substructures of $\bm K$ and $\bm K^*$, respectively.
We say that a map $\bm f \colon \bm E \to \bm E^*$ is a \deft{good map} if $\bm f = (f, f_{\r})$ is an $\ca L_2$-isomorphism such that $f_{\r} \colon \bm k_{\bm E} \to \bm k_{\bm E^*}$ is moreover elementary as a partial map from $\bm k$ to $\bm k^*$.
In particular, $f \colon E \to E^*$ is a pre-$H$-field isomorphism and $f_{\r} \colon \bm k_{\bm E} \to \bm k_{\bm E^*}$ is an $\ca L_{\res}$-isomorphism.

\begin{thm}[Equivalence Theorem]\label{eqthm}
Every good map $\bm E \to \bm E^*$ is elementary as a partial map from $\bm K$ to~$\bm K^*$.
\end{thm}
\begin{proof}
Let $\bm f = (f, f_{\r})$ be a good map from $\bm E$ to $\bm E^*$.
Let $\kappa$ be a cardinal of uncountable cofinality such that $\max\{|E|, |\bm k_{\bm E}|, |\ca L_{\res}|\}<\kappa$ and $2^\lambda < \kappa$ for every cardinal $\lambda<\kappa$. %\footnote{For instance, take a cardinal $\lambda$ of uncountable cofinality with $\max\{|E|, |\bm k_{\bm E}|, |\ca L_{\res}|\}<\lambda$ and set $\kappa \coloneqq \beth_{\lambda}$.}
By passing to elementary extensions, we may suppose that $\bm K$ and $\bm K^*$ are $\kappa$-saturated.
We say a good substructure $(E_1, \bm k_1)$ of $\bm K$ is \deft{small} if $\max\{|E_1|, |\bm k_1|\}<\kappa$; note that if $(E_1, \bm k_1)$ is small and $E_2 \subseteq K$ is a pre-$H$-field extension of $E_1$ with $\Gamma_{E_2}=\Gamma_{E_1}$ and $\pi_2(E_2, E_2) \subseteq \bm k_1$, then $(E_2, \bm k_1)$ is small.
To establish the theorem, it suffices to show that the set of good maps with small domain is a back-and-forth system from $\bm K$ to $\bm K^*$.

Given $a \in K$, we need to extend $\bm f$ to a good map with small domain $(F, \bm k_{\bm F})$ such that $a \in F$.
First, we note two basic procedures for extending $\bm k_{\bm E}$ and $\pi_2(E, E)$:
\begin{enumerate}
	\item\label{bnf1} Given $d \in \bm k$, arranging that $d \in \bm k_{\bm E}$: By the saturation assumption, we can extend $f_{\r}$ to a partial elementary map with $d$ in its domain. Without changing $f$ or $E$, this yields an extension of $\bm f$ to a good map with small domain.
	\item\label{bnf2} Given $d \in \bm k_{\bm E} \setminus \pi_2(E, E)$, arranging that $d \in \pi_2(E, E)$: By Lemmas~\ref{adh7.1.4o} and \ref{adh6.3.1o}, we have $b \in K$ such that $\pi_2(E\langle b \rangle, E\langle b \rangle) = \pi_2(E, E)\langle d \rangle$ and $\Gamma_{E\langle b \rangle} = \Gamma_E$, and a good map $(g, f_{\r})$ extending $\bm f$ with small domain $(E\langle b \rangle, \bm k_{\bm E})$.
\end{enumerate}
In this proof only, we call an ordered differential field \deft{closed} if it is real closed, linearly surjective, and has exponential integration.
Consider $E \langle a \rangle$.
By \ref{bnf1}, we extend $f_{\r}$ to a partial elementary map $f_{1, \r}$ with domain $\bm k_1$ such that $\bm k_1$ is closed, $\pi_2(E \langle a \rangle, E \langle a \rangle) \subseteq \bm k_1$, and $|\bm k_1|<\kappa$.
By \ref{bnf2}, we extend $f$ to $f_1$ so that $\bm f_1 = (f_1, f_{1, \r})$ is a good map with small domain $(E_1, \bm k_1)$ satisfying $\pi_2(E_1, E_1) = \bm k_1$.
In the same way, we extend $\bm f_1$ to a good map $\bm f_2 = (f_2, f_{2, \r})$ with small domain $(E_2, \bm k_2)$ such that $\bm k_2$ is closed and $\pi_2(E_1 \langle a \rangle, E_1 \langle a \rangle) \subseteq \bm k_2 = \pi_2(E_2, E_2)$.
Iterating this procedure and taking unions yields a good map $\bm f_\omega = (f_\omega, f_{\omega, \r})$ with small domain $\bm E_\omega = (E_\omega, \bm k_\omega)$ such that $\bm k_\omega$ is closed and $\pi_2(E_\omega \langle a \rangle, E_\omega \langle a \rangle) = \bm k_\omega = \pi_2(E_\omega, E_\omega)$.

Now take the $\d$-Hensel-Liouville closure $F \subseteq K$ of $E_\omega\langle a \rangle$ by Theorem~\ref{dhlclosure}.
By construction, $\pi_2(F,F)=\bm k_\omega$ and $F = \bigcup_n F_n$ is a countable increasing union of pre-$H$-fields $F_n$ such that $F_0 = E_\omega\langle a \rangle$ and for all $n$, either $|F_{n+1}|=|F_n|$ or $F_{n+1}$ is an immediate extension of $F_n$.
It follows that $(F, \bm k_\omega)$ is small.
Thus we may apply Lemma~\ref{qe:sameres} to extend $f_\omega$ to $f_{\omega+1}$ so that $(f_{\omega+1}, f_{\omega, \r})$ is a good map with small domain $(F, \bm k_\omega)$.
\end{proof}

\begin{cor}\label{ake}
We have $\bm K \equiv \bm K^*$ if and only if $\bm k \equiv \bm k^*$.
\end{cor}
\begin{proof}
The left-to-right direction is trivial, so suppose that $\bm k \equiv \bm k^*$.
We may assume that $\ca L_{\res}$ is an expansion of $\{+, -, \cdot, 0, 1, \der, \les\}$ by relation symbols, so then we can identify $\Q$ with an $\ca L_{\res}$-substructure of $\bm k$ and an $\ca L_{\res}$-substructure of $\bm k^*$, respectively, and by assumption, these are isomorphic.
Consider $(\Q, \Q; \pi_2)$: the first $\Q$ is endowed with its usual ordered ring structure, the trivial derivation, and the trivial dominance relation $\prece$; the second $\Q$ is an $\ca L_{\res}$-structure as above;
and $\pi_2 \colon \Q^2 \to \Q$ is defined by, for $q_1, q_2 \in \Q$, $\pi_2(q_1, q_2) = q_1q_2^{-1}$ if $q_2 \neq 0$ and $\pi_2(q_1, 0) = 0$.
This structure embeds into both $\bm K$ and $\bm K^*$, inducing an obvious good map between good substructures, which is then elementary as a partial map from $\bm K$ to $\bm K^*$ by Theorem~\ref{eqthm}.
Hence $\bm K \equiv \bm K^*$.
\end{proof}

\begin{cor}\label{multimodcompl}
Let $\bm E = (E, \bm k_E; \pi_2) \subseteq \bm K$ with $\bm E \models T^{\dhl}$ and $\bm k_E \prece \bm k$.
Then $\bm E \prece \bm K$.
\end{cor}
\begin{proof}
View the identity map on $(E, \bm k_E)$ as a map from $\bm E$ to~$\bm K$ and note that it is good.
\end{proof}
In particular, if $T_{\res}$ is model complete, then so is $T^{\dhl}$.
We now prove relative quantifier elimination, using much more of the strength of the Equivalence Theorem than the previous corollaries did.

\subsubsection{Relative quantifier elimination}
Let $x = (x_1, \dots, x_m)$ be a tuple of distinct variables of sort $K$ and $y = (y_1, \dots, y_n)$ be a tuple of distinct variables of sort $\bm k$.
We call an $\ca L_2$-formula $\psi(x, y)$ \deft{special} if $\psi(x, y)$ is
\[
\psi_{\r}\big(\pi_2(P_1(x), Q_1(x)), \dots, \pi_2(P_{\ell}(x), Q_{\ell}(x)), y\big),
\]
for some $\ell \in \N$, $\ca L_{\res}$-formula $\psi_{\r}(u_1, \dots, u_{\ell}, y)$, and $P_1, Q_1, \dots, P_{\ell}, Q_{\ell} \in \Z\{X_1, \dots, X_m\}$.

\begin{thm}\label{relQE}
Let $\phi(x, y)$ be an $\ca L_2$-formula with $x$ and $y$ as above.
Then $\phi(x, y)$ is $T^{\dhl}$-equivalent to
\begin{equation}\tag{$*$}\label{relQEform}
\big(\theta_1(x) \wedge \psi_1(x,y)\big) \vee \dots \vee \big(\theta_N(x) \wedge \psi_N(x,y)\big)
\end{equation}
for some $N \in \N$, quantifier-free $\ca L$-formulas $\theta_1(x)$, \dots, $\theta_N(x)$, and special formulas $\psi_1(x,y)$, \dots, $\psi_N(x,y)$.
\end{thm}
\begin{proof}
Let $\Theta(x, y)$ be the set of $\ca L_2$-formulas displayed in \eqref{relQEform}.
Then $\Theta(x, y)$ is obviously closed under disjunction and also closed under negation, up to logical equivalence.
It suffices to show that every $(x,y)$-type (consistent with $T^{\dhl}$) is determined by its intersection with $\Theta(x, y)$.
Below, $\theta(x)$ ranges over quantifier-free $\ca L$-formulas and $\psi(x, y)$ ranges over special formulas.
For a model $\bm K = (K, \bm k; \pi_2)$ of $T^{\dhl}$ and $a \in K^{m}$ and $d \in \bm k^{n}$, we set
\[
\qftp^K(a)\ \coloneqq\ \{ \theta(x) : K \models \theta(a) \}
\]
and
\[
\tp_{\r}^{\bm K}(a, d)\ \coloneqq\ \{ \psi(x, y) : \bm K \models \psi(a, d) \}.
\]
Let $\bm K = (K, \bm k; \pi_2)$ and $\bm K^* = (K^*, \bm k^*; \pi_2^*)$ be models of $T^{\dhl}$.
Let $a \in K^{m}$, $a^* \in (K^*)^{m}$, $d \in \bm k^{n}$, and $d^* \in (\bm k^*)^{n}$, and assume that $\qftp^K(a) = \qftp^{K^*}(a^*)$ and $\tp_{\r}^{\bm K}(a, d) = \tp_{\r}^{\bm K^*}(a^*, d^*)$.
We need to show that $\tp^{\bm K}(a, d) = \tp^{\bm K^*}(a^*, d^*)$.

Let $E \coloneqq \Q\langle a \rangle$ be the differential subfield of $K$ generated by $a$, construed as a pre-$H$-subfield of $K$, and let $\bm k_{\bm E}$ be the $\ca L_{\res}$-substructure and subfield of $\bm k$ generated by $\pi_2(E,E)$ and $d$.
Then $\bm E = (E, \bm k_{\bm E})$ is a good substructure of $\bm K$.
Defining $E^*$ and $\bm k_{\bm E^*}$ likewise yields a good substructure $\bm E^* = (E^*, \bm k_{\bm E^*})$ of $\bm K^*$.
Since we have $\qftp^{K}(a) = \qftp^{K^*}(a^*)$ and $\tp_{\r}^{\bm K}(a, d) = \tp_{\r}^{\bm K^*}(a^*, d^*)$, the natural map $\bm E \to \bm E^*$ sending $a \mapsto a^*$ and $d \mapsto d^*$ is good, and hence $\tp^{\bm K}(a, d) = \tp^{\bm K^*}(a^*, d^*)$ by Theorem~\ref{eqthm}.
\end{proof}

Note that the outer $\ca L_{\res}$-formula in a special formula may have quantifiers, so the previous theorem eliminates quantifiers down to quantifiers over the sort $\bm k$.
In particular, if $T_{\res}$ has quantifier elimination, then so does $T^{\dhl}$.
Also, $\bm k$ is \emph{purely stably embedded} in $\bm K$ in the following sense:
\begin{cor}\label{stabembed}
Let $\bm K = (K, \bm k; \pi_2) \models T^{\dhl}$.
Every subset of $\bm k^n$ definable in $\bm K$ \textnormal{(}with parameters from $\bm K$\textnormal{)} is definable in the $\ca L_{\res}$-structure $\bm k$ \textnormal{(}with parameters from~$\bm k$\textnormal{)}.
\end{cor}

Let $T'_{\res} \subseteq T_{\res}$ and let $T$ be the $\ca L_2$-theory of structures $(K, \bm k; \pi_2)$ such that $K$ is a pre-$H$-field with gap~$0$, $\bm k \models T'_{\res}$, and we no longer require $\pi \colon \ca O \to \bm k$ to be surjective.
\begin{cor}\label{multimodcompa}
If $T_{\res}$ is the model companion of $T'_{\res}$, then $T^{\dhl}$ is the model companion of~$T$.
If $T_{\res}$ additionally has quantifier elimination, then $T^{\dhl}$ is the model completion of~$T$.
\end{cor}
\begin{proof}
Suppose that every model of $T'_{\res}$ can be extended to a model of $T_{\res}$.
By Corollary~\ref{multimodcompl} and Theorem~\ref{relQE}, it remains to show that every model of $T$ can be extended to a model of $T^{\dhl}$.
Let $(K, \bm k; \pi_2) \models T$.
First extend $\bm k$ to $\bm k^* \models T_{\res}$ and apply Corollary~\ref{adh6.3.3o} to obtain a pre-$H$-field extension $K^*$ of $K$ with gap~$0$ and a surjective ordered differential ring homomorphism $\pi^* \colon \ca O_{K^*} \to \bm k^*$.
Now by Theorem~\ref{dhlclosure} extend $K^*$ to its $\d$-Hensel-Liouville closure $(K^*)^{\dhl}$ and $\pi^*$ to $\pi^* \colon \ca O_{(K^*)^{\dhl}} \to \bm k^*$, and note that $\big((K^*)^{\dhl}, \bm k^*; \pi_2^*\big) \models T^{\dhl}$.
\end{proof}

The NIP transfer principle in Proposition~\ref{NIPtransfer} below follows from Theorem~\ref{relQE} and an analogous transfer principle from \cite{delon,belairbousquet} by the standard trick of ``forgetting'' the derivation.
In those papers, it is shown that the three-sorted theory of henselian valued fields of equicharacteristic~$0$ has NIP if the residue field does.
More explicitly, let $(K, \bm k, \Gamma; \pi, v)$ be a valued field of equicharacteristic~$0$ construed as a three-sorted structure.
That is, $K$ is a field in the language of rings, $\Gamma$ is an ordered abelian group, and $\bm k$ is a field of characteristic~$0$ in the language of rings such that the surjective map $v \colon K^{\x} \to \Gamma$ makes $K$ a henselian valued field with residue field (isomorphic to) $\bm k$ and residue field map $\pi \colon \ca O \to \bm k$.
The relevant NIP transfer principle from \cite{delon,belairbousquet} is that if $\bm k$ has NIP, then so does $(K, \bm k, \Gamma; \pi, v)$.
Moreover, although not explicitly stated there, the proof goes through when $\bm k$ has extra structure.
It is not relevant here, but one can also expand $(K, \bm k, \Gamma; \pi, v)$ by an angular component map, as done in \cite{belairbousquet}.

What is the difference with our setup? We have $\bm K = (K, \bm k; \pi_2) \models T^{\dhl}$, a two-sorted structure in which $K$ is additionally equipped with a derivation, an ordering, and a dominance relation, and we have a binary rather than unary version of the residue field map.
Although $\bm k$ too is equipped with additional structure, this is taken care of already, as mentioned above.
Let $K_{\ring}$ be the reduct of $K$ to the language of rings.
Then the ordering, the dominance relation, and $\pi_2$ are definable without parameters in $(K_{\ring}, \bm k, \Gamma; \pi, v)$:
Since $K$ is real closed, its ordering is definable in $K_{\ring}$, and its dominance relation is definable from $v$ and $\Gamma$.
Finally, $\pi_2$ is definable from $\pi$.
Thus, we must only deal with the derivation, as done in detail in the proposition.

\begin{prop}\label{NIPtransfer}
Let $\bm K = (K, \bm k; \pi_2) \models T^{\dhl}$.
If the $\ca L_{\res}$-structure $\bm k$ has NIP, then so does~$\bm K$.
\end{prop}
\begin{proof}
Suppose that $\bm k$ has NIP.
Recall that a boolean combination of formulas that have NIP has NIP, so by Theorem~\ref{relQE} it suffices to show that every quantifier-free $\ca L$-formula has NIP and every special formula has NIP.

First, consider a special formula $\psi(x; y, z)$, where $y$ is an $m$-tuple of distinct variables of sort $K$, $z$ is an $n$-tuple of distinct variables of sort $\bm k$, and $x$ is a single variable distinct from the variables in $y$ and $z$, and suppose towards a contradiction that $\psi(x; y, z)$ is independent in $\bm K$.
As a reminder, this means that the family defined by $\psi(x; y, z)$ has infinite VC-dimension, where, by convention, the semicolon indicates that $x$ is the object variable and $y,z$ are the parameter variables.
That is, for every $N \in \N$ with $N \ges 1$, there exist $a_1, \dots, a_N$ of the same sort as $x$ such that, for all $I \subseteq \{1, \dots, N\}$, there exist $b_I \in K^{m}$ and $d_I \in \bm k^{n}$ with:
\[
\bm K \models \psi(a_i; b_I, d_I)\ \iff\ i \in I.
\]

First suppose that the variable $x$ is of sort $\bm k$.
As a special formula, $\psi(x; y, z)$ is thus \[\psi_{\r}\big(x; \pi_2(P_1(y), Q_1(y)), \dots, \pi_2(P_{\ell}(y), Q_{\ell}(y)), z\big),\]
where $\ell \in \N$, $P_1, Q_1, \dots, P_{\ell}, Q_{\ell} \in \Z\{Y_1, \dots, Y_m\}$, and $\psi_{\r}(x; u_1, \dots, u_{\ell}, z)$ is an $\ca L_{\res}$-formula.
Then $\psi_{\r}(x; u_1, \dots, u_{\ell}, z)$ itself is independent in $\bm k$.
To see this, let $N \ges 1$ and $I \subseteq \{1, \dots, N\}$, and take $a_1, \dots, a_N \in \bm k$, $b_I \in K^{m}$, and $d_I \in \bm k^{n}$ as above.
Then letting \[e_I\ \coloneqq\ \big(\pi_2(P_1(b_I), Q_1(b_I)), \dots, \pi_2(P_{\ell}(b_I), Q_{\ell}(b_I))\big)\ \in\ \bm k^{\ell},\] we have
\begin{align*}
\bm k \models \psi_{\r}(a_i; e_I, d_I)\ &\iff\ \bm K \models \psi_{\r}\big(a_i; \pi_2(P_1(b_I), Q_1(b_I)), \dots, \pi_2(P_{\ell}(b_I), Q_{\ell}(b_I)), d_I\big)\\
&\iff\ i \in I.
\end{align*}

Now suppose that $x$ is of sort $K$, so then $\psi(x; y, z)$ is \[\psi_{\r}\big(\pi_2(P_1(x, y), Q_1(x, y)), \dots, \pi_2(P_{\ell}(x, y), Q_{\ell}(x, y)), z\big),\]
where $\ell \in \N$, $P_1, Q_1, \dots, P_{\ell}, Q_{\ell} \in \Z\{X, Y_1, \dots, Y_m\}$, and $\psi_{\r}(u_1, \dots, u_{\ell}, z)$ is an $\ca L_{\res}$-formula.
For notational simplicity, we assume that $\ell=m=n=1$ and replace $P_1$ by $P$, $Q_1$ by $Q$, and $Y_1$ by $Y$.
It is no longer obvious how to extract an independent $\ca L_{\res}$-formula from $\psi$ now that $x$ and $y$ both appear in the term $\pi_2(P(x,y),Q(x,y))$, so for this case we instead appeal to the results of \cite{delon,belairbousquet} (this works too for the previous case, but the goal in separating the cases is to illustrate the reason for using \cite{delon,belairbousquet}).
Now comes the ``forgetting'' of the derivation.
Explicitly, take $r$ and $p, q \in \Z[X_0, \dots, X_r, Y_{0}, \dots, Y_{r}]$ such that
\[P(a, b)\ =\ p(a, a', \dots, a^{(r)}, b, b', \dots, b^{(r)})\qquad \text{and}\qquad Q(a, b)\ =\ q(a, a', \dots, a^{(r)}, b, b', \dots, b^{(r)})\]
for all $a, b \in K$.
Then for all $a,b \in K$ and $d \in \bm k$, we have $\bm K \models \psi(a; b, d)$ if and only if
\[
(K_{\ring}, \bm k; \pi_2) \models \psi_{\r}\big(\pi_2(p(a, \dots, a^{(r)}, b, \dots, b^{(r)}), q(a, \dots, a^{(r)}, b, \dots, b^{(r)})), d\big),
\]
where $K_{\ring}$ is the reduct of $K$ to the language of rings (no change is made here to the sort $\bm k$, which in particular retains its derivation).
Letting $\varphi(x_0, \dots, x_r; y_0, \dots, y_r, z)$ be
\[ \psi_{\r}\big(\pi_2(p(x_0, \dots, x_r, y_0, \dots, y_r), q(x_0, \dots, x_r, y_0, \dots, y_r)), z\big), \]
$\varphi$ is independent in $(K_{\ring}, \bm k; \pi_2)$, but since $\pi_2$ is definable in $(K_{\ring}, \bm k, \Gamma; \pi, v)$, this yields an independent formula in $(K_{\ring}, \bm k, \Gamma; \pi, v)$, contradicting \cite{delon,belairbousquet}.

A similar argument shows that every quantifier-free $\ca L$-formula has NIP; alternatively, reduce to the fact that RCVF, the theory of real closed fields with a nontrivial valuation whose valuation ring is convex, has NIP, bypassing \cite{delon,belairbousquet}.
\end{proof}

\subsection{One-sorted results}\label{sec:mainone}

Recall that the theory of closed ordered differential fields is the model completion of the theory of ordered differential fields, where no interaction is assumed between the ordering and the derivation.
This theory has quantifier elimination and is complete \cite{singer-codf}.
We have the following results for $T^{\dhl}_{\codf}$, the one-sorted theory of $\d$-Hensel-Liouville closed pre-$H$-fields with nontrivial value group and closed ordered differential residue field.
\begin{thm}\label{preH:qe}
The theory $T^{\dhl}_{\codf}$:
\begin{enumerate}
    \item\label{preH:qeitem} has quantifier elimination;
    \item\label{preH:modcompitem} is the model completion of the theory of pre-$H$-fields with gap~$0$;
    \item\label{preH:compitem} is complete, and hence decidable;
    \item\label{preH:distal} is distal, and hence has NIP;
    \item\label{preH:locomin} is locally o-minimal.
\end{enumerate}
\end{thm}
\begin{proof}
Item~\ref{preH:qeitem} follows from a standard quantifier elimination test (see for example \cite[Corollary~B.11.9]{adamtt}), using quantifier elimination for closed ordered differential fields, Corollary~\ref{adh6.3.3o}, and Lemma~\ref{qe:sameres}.

For item~\ref{preH:modcompitem}, argue as in Corollary~\ref{multimodcompa} that every pre-$H$-field with gap~$0$ extends to a model of $T^{\dhl}_{\codf}$, using the definition of closed ordered differential fields to check that they are expint-closed and linearly surjective.

The completeness in item~\ref{preH:compitem} follows from \ref{preH:qeitem} using that the ordered ring $(\Z; +,-,\cdot,0,1,\les)$ expanded by the trivial derivation and dominance relation embeds into every pre-$H$-field (or use Corollary~\ref{ake}).
Decidability then follows, since $T^{\dhl}_{\codf}$ has a recursive axiomatization.

Item~\ref{preH:distal} follows by \cite[Proposition~7.1]{acgz-distal} from \ref{preH:qeitem} and the fact that RCVF, the theory of real closed fields with a nontrivial valuation whose valuation ring is convex, is distal; in applying \cite[Proposition~7.1]{acgz-distal}, note that a derivation satisfies its assumptions, as shown implicitly in the proof of Proposition~\ref{NIPtransfer}.

Item~\ref{preH:locomin} is similar to \cite[Proposition~16.6.8]{adamtt}, which is the statement of local o-minimality of $T^{\nl}$ (strictly speaking, it is equivalent to local o-minimality by taking fractional linear transformations).
By \ref{preH:qeitem} and compactness, it suffices to show that for $L \succe K \models T^{\dhl}_{\codf}$ with $f,g>K$ in $L$, there is an isomorphism $K\langle f \rangle \to K\langle g \rangle$ over $K$, which is done as in Lemma~\ref{notcofinal}. The only difference is that now $vf<\Gamma$ instead of $\Gamma^< < vf < 0$, so use that $\Psi=\Gamma^<$ to get $vf_n<\Gamma$ for all $n$, where $f_n$ is as in the proof of Lemma~\ref{notcofinal}.
\end{proof}

\section{Examples}\label{sec:examples}

In this section, we provide two examples of pre-$H$-fields with gap~$0$.
The first, outlined in the introduction, is a model of $T^{\dhl}$, with $T_{\res} = T^{\nl}_{\sm}$, that is transexponential and whose residue field is exponentially bounded; we have continued the study of such structures in \cite{pc-transtamepair,pc-preH-dim}.
The second shows that the assumption of exponential integration is necessary in Corollary~\ref{ake} and also that $T^{\dhl}$ is satisfiable for any $T_{\res}$ extending the theory of ordered differential fields that are real closed, linearly surjective, and have exponential integration.

\subsection{A transexponential pre-\texorpdfstring{$H$}{H}-field}\label{sec:transexppreH}
This subsection elaborates on \cite[Example~10.1.7]{adamtt}.
Let $\T^*$ be an $\aleph_0$-saturated elementary extension of $\T$.
Enlarging the valuation ring $\ca O_{\T^*}$ of $\T^*$ to $\dot{\ca O}_{\T^*} = \{ f \in \T^* : |f| \les \exp^n (x)\ \text{for some}\ n \ges 1 \}$, where $\exp^n$ denotes the $n$-th iterate of the exponential function, yields a pre-$H$-field $(\T^*, \dot{\ca O}_{\T^*})$ that is $\d$-henselian, real closed, and has exponential integration.
The saturation ensures that $\dot{\ca O}_{\T^*}$ is a proper subring of $K$, i.e., $K$ contains a transexponential element.
Moreover, the residue field of $(\T^*, \dot{\ca O}_{\T^*})$ is elementarily equivalent to $\T$ as an ordered valued differential field.
To explain this, we first review the theory of $\T$.

In the language $\{+,-,\cdot, 0, 1, \der, \prece, \les\}$ the theory of $\T$ is model complete and axiomatized by the theory $T^{\nl}_{\sm}$ of newtonian, $\upomega$-free, Liouville closed $H$-fields with small derivation.
An asymptotic field $K$ is \deft{differential-valued} (\deft{$\d$-valued} for short) if $\ca O = C + \cao$; $\d$-valued fields are pre-$\d$-valued and an \deft{$H$-field} is a $\d$-valued pre-$H$-field.
An $H$-field $K$ is \deft{Liouville closed} if it is real closed, has exponential integration, and has \deft{integration} in the sense that $\der$ is surjective.
For more on $H$-fields and related notions, see \cite[Chapter~10]{adamtt}.
The property of $\upomega$-freeness is crucial to studying $\T$ but incidental here, so we refer the reader to \cite[\S11.7]{adamtt}.
Likewise, we do not define newtonianity, a technical cousin of $\d$-henselianity, and instead refer the reader to \cite[Chapter~14]{adamtt}.

To describe $(\T^*, \dot{\ca O}_{\T^*})$ and its residue field, it is convenient to work with value groups instead of valuation rings via the notions of coarsening and specialization.
Let $K$ be pre-$H$-field with small derivation and let $\Delta$ be a nontrivial proper convex subgroup of $\Gamma$ such that $\psi(\Delta^{\neq}) \subseteq \Delta$ and $\psi(\Gamma \setminus \Delta) \subseteq \Gamma \setminus \Delta$.
The \deft{coarsening of $K$ by $\Delta$} is the ordered differential field $K$ with the valuation
\begin{align*}
v_\Delta \colon K^{\x} &\to \Gamma/\Delta\\
a &\mapsto va + \Delta,
\end{align*}
denoted by $K_\Delta$.
Its valuation ring is \[\dot{\ca O}\ \coloneqq\ \{ a \in K : va \ges \delta\ \text{for some}\ \delta \in \Delta \}\ \supseteq\ \ca O\] with maximal ideal \[\dot{\cao}\ \coloneqq\ \{ a \in K : va > \Delta \}\ \subseteq\ \cao.\]
Then $K_\Delta$ is asymptotic with gap~$0$ by \cite[Corollary~9.2.26 and Lemma~9.2.24]{adamtt}, so it is pre-$\d$-valued.
Moreover, $K_\Delta$ is a pre-$H$-field: $\dot{\ca O}$ remains convex since $\dot{\cao}\subseteq\cao\subseteq(-1,1)$ and $K_{\Delta}$ still satisfies \ref{ph3} since $\dot{\ca O}\supseteq\ca O$.

Setting $\dot a \coloneqq a + \dot{\cao}$ for $a \in \dot{\ca O}$, we equip the ordered differential residue field $\dot K \coloneqq \dot{\ca O}/\dot{\cao}$ of $K_\Delta$ with the valuation
\begin{align*}
v \colon \dot{K}^{\x} &\to \Delta\\
\dot{a} &\mapsto va,
\end{align*}
making $\dot K$ an ordered valued differential field with small derivation called the \deft{specialization of $K$ to $\Delta$}.
Its valuation ring is $\ca O_{\dot K} \coloneqq \{ \dot a : a \in \ca O \}$ with maximal ideal $\cao_{\dot K} \coloneqq \{ \dot a : a \in \cao \}$.
Clearly, $\ca O_{\dot K}$ is convex.
The map $\ca O \to \ca O_{\dot K}$ given by $a \mapsto \dot{a}$ induces an isomorphism $\res(K) \cong \res(\dot{K})$ of ordered differential fields.
By \cite[Lemma~10.1.8]{adamtt}, $\dot K$ is pre-$\d$-valued and if $K$ is $\d$-valued, then so is $\dot{K}$ with $C_{\dot K}=C$, where we identify $C$ with a subfield of $C_{\dot K}$ via $a \mapsto \dot a$.
To see that $\dot K$ is a pre-$H$-field, it remains to check \ref{ph3}, so let $a \in \dot{\ca O}$ with $\dot a > \ca O_{\dot{K}}$.
Then $va \in \Delta^{<}$ and so $v(a')=va+\psi(va) \in \Delta$.
Also, $a'>0$, so $a' > \dot{\cao}$, and thus $\dot{a}'>0$.
If $K$ is an $H$-field, then so is $\dot{K}$, since $H$-fields are exactly the $\d$-valued pre-$H$-fields.

Suppose now that $K=\T^*$ and set $\Delta \coloneqq \{ \gamma \in \Gamma : \psi^n(\gamma) \ges 0\ \text{for some}\ n \ges 1 \}$, a nontrivial proper convex subgroup of $\Gamma$ with $\psi(\Delta^{\neq}) \subseteq \Delta$ and $\psi(\Gamma \setminus \Delta) \subseteq \Gamma \setminus \Delta$, as in \cite[Example~10.1.7]{adamtt}.
It is an exercise to check that $\dot{\ca O}=\dot{\ca O}_{\T^*}$ as defined above.
To summarize the preceding discussion, $K_\Delta$ is a pre-$H$-field with gap~$0$ and $\dot{K}$ is an $H$-field with $C_{\dot{K}}=C$, an elementary extension of $\R$.
As the differential field structure is unchanged, $K$ is still real closed and has exponential integration.
And since $K$ is newtonian, $K_\Delta$ is $\d$-henselian by \cite[Lemma~14.1.2]{adamtt}; this also uses that $1 \coloneqq v(1/x) > 0$ is the unique element of $\Gamma^{\neq}$ with $\psi(1)=1$ (see \cite[Lemma~9.2.15]{adamtt}), so automatically $1 \in \Delta$.

We now proceed to show that $\dot K \models T^{\nl}_{\sm}$.
First, since $K$ is $\upomega$-free, so is $\dot K$ by \cite[Lemma~11.7.24]{adamtt}.
Since $K$ is real closed, so is $\dot K$, and since $K$ has integration, so does $\dot K$ by \cite[Lemma~9.4.13]{adamtt}.
To see that $\dot K$ has exponential integration, let $a \in \dot{\ca O} \setminus \dot{\cao}$ and take $y \in K^{\x}$ with $y^\dagger=a$.
We may have $vy=0 \in \Delta$.
If $vy\neq 0$, from $\psi(vy)=v(y^\dagger)=va \in \Delta$, we get $vy \in \Delta$.
In either case, $\dot{y} \in \dot{K}^{\x}$ and $\dot{y}^\dagger = \dot a$.
Hence, $\dot K$ is Liouville closed.

It remains to see that $\dot K$ is newtonian.
To explain this, for a valued differential field $F$ and $a \in F^{\x}$, we let $F^{a}$ denote the valued field $F$ equipped with the derivation $a^{-1}\der_F$; if $F$ is newtonian, then so is $F^{a}$.
Let $\phi \in \dot{\ca O} \setminus \dot{\cao}$ be such that $\psi(\delta) \ges v\phi \in \Delta$ for some $\delta \in \Delta^{\neq}$.
Then by \cite[Lemma~14.1.4]{adamtt}, it suffices to show that $(\dot{K}^{\dot\phi}, v_{\dot\phi}^\flat)$, the coarsening of $\dot{K}^{\dot\phi}$ by $\Delta_{\dot\phi}^\flat \coloneqq \{ \delta \in \Delta : \psi(\delta) > v\dot{\phi} \}$, is $\d$-henselian.
First, note that $\Delta_{\dot\phi}^\flat \neq \0$, since $\psi(\Delta^{\neq})$ has no maximum, and that $\Delta_{\dot\phi}^\flat = \Gamma_\phi^\flat \coloneqq \{ \gamma \in \Gamma : \psi(\gamma) > v\phi \}$,
since for any $\gamma \in \Gamma_\phi^\flat$ we have $\psi(\gamma) \in \Delta$ and thus $\gamma \in \Delta$.
It follows that $(\dot{K}^{\dot\phi}, v_{\dot\phi}^\flat)$ is isomorphic to the specialization of $(K^\phi, v_\phi^\flat)$ to $\Delta/\Gamma_\phi^\flat$, where $(K^\phi, v_\phi^\flat)$ denotes the coarsening of $K^\phi$ by $\Gamma_\phi^\flat$.
Since $K$ is newtonian, so is $K^\phi$.
Hence, $(K^\phi, v_\phi^\flat)$ is $\d$-henselian by \cite[Lemma~14.1.2]{adamtt} again; in applying this lemma, note that the new $1_{\phi}>0$ is the unique element of $\Gamma^{\neq}$ with $\psi(1_{\phi})-v\phi = 1_{\phi}$, so again $1_{\phi} \in \Gamma_\phi^\flat$.
Thus the specialization of $(K^\phi, v_\phi^\flat)$ to $\Delta/\Gamma_\phi^\flat$ is also $\d$-henselian by \cite[Lemma~7.1.6]{adamtt}.

Putting this together, we have established:
\begin{prop}
Let $K \coloneqq \T^*$ be an $\aleph_0$-saturated elementary extension of $\T$ and set
$\Delta\ \coloneqq\ \{ \gamma \in \Gamma : \psi^n(\gamma) \ges 0\ \text{for some}\ n \ges 1 \}.$
Then the coarsening $K_{\Delta}$ of $K$ by $\Delta$ \textnormal{(}equivalently, $K$ equipped with the valuation ring $\dot{\ca O} = \{ f \in \T^* : |f| \les \exp^n (x)\ \text{for some}\ n \ges 1 \}$\textnormal{)} is a pre-$H$-field that is $\d$-henselian, real closed, and has exponential integration.
The residue field $\dot{K}$ of $K_{\Delta}$ is elementarily equivalent to $\T$ as an ordered valued differential field.
\end{prop}

In summary, we have thus decomposed $\T^*$ into a transexponential pre-$H$-field with gap~$0$, $K_{\Delta}$, and an exponentially bounded model of $T^{\nl}_{\sm}$, $\dot{K}$.
Combining this with the results of the previous section and facts about $T^{\nl}_{\sm}$ from \cite{adamtt}, we obtain:
\begin{cor}
Let $\ca L_{\res} = \{+,-,\cdot, 0, 1, \der, \prece, \les\}$ and $T_{\res} = T^{\nl}_{\sm}$.
Then $(K_\Delta, \dot{K}; \pi_2) \models T^{\dhl}$, where $\pi_2$ is the binary version of the residue map $K \to \dot{K}$, and $\dot{K}$ is purely stably embedded in $(K_\Delta, \dot{K}; \pi_2)$ in the sense of Corollary~\ref{stabembed}.
Also, $T^{\dhl}$:
\begin{enumerate}
    \item is complete;
    \item is model complete, and moreover is the model companion of the theory $T$ from Corollary~\ref{multimodcompa} with $T'_{\res}$ being the theory of $H$-fields with small derivation;
    \item has NIP;
    \item has quantifier elimination if $\ca L_{\res}$ and $T^{\nl}_{\sm}$ are expanded by a function symbol for multiplicative inversion and two unary predicates $\Upomega$ and $\Uplambda$ as in \cite[Chapter~16]{adamtt}.
\end{enumerate}
\end{cor}
\begin{proof}
Note that $T_{\res}$ is required to expand the theory of linearly surjective, expint-closed ordered differential fields, and indeed newtonian fields are linearly surjective by \cite[Corollary~14.2.2]{adamtt}.
Then the completeness of $T^{\dhl}$ follows from the completeness of $T^{\nl}_{\sm}$ \cite[Corollary~16.6.3]{adamtt} by Corollary~\ref{ake}.
Likewise, the model completeness and model companion statements follow from \cite[Corollary~16.2.5]{adamtt} and Corollary~\ref{multimodcompl}, 
NIP follows from \cite[Proposition~16.6.6]{adamtt} and Proposition~\ref{NIPtransfer}, and quantifier elimination from \cite[Theorem~16.0.1]{adamtt} and Theorem~\ref{relQE}.
\end{proof}

\subsection{Exponential integration is necessary}\label{sec:expintnecessary}
Unlike the AKE theorems for monotone fields, Corollary~\ref{ake} includes the assumption of closure under exponential integration.
Corollary~\ref{ex:notelem} shows that the theorem fails when this assumption is dropped by exhibiting two $\d$-henselian, real closed pre-$H$-fields that are not elementarily equivalent but have isomorphic ordered differential residue fields.
More precisely, one of these pre-$H$-fields has exponential integration but the other does not, and, indeed, the issue is pervasive in the sense that it works for any ordered differential residue field that is linearly surjective and expint-closed.
Additionally, the asymptotic couples of these pre-$H$-fields are elementarily equivalent, which shows that some assumption on logarithmic derivatives is necessary not just in Corollary~\ref{ake}, but even in any possible three-sorted extension of it.

This lemma is a variant of \cite[Lemma~10.4.6]{adamtt}.
We now set $A^{\succ 1} \coloneqq \{ a \in A : a \succ 1\}$ for $A \subseteq K$, 
and make use of the background material on asymptotic couples from \S\ref{sec:ac-small}.
\begin{lem}\label{ex:adjexpint}
Suppose that $K$ is pre-$\d$-valued of $H$-type with gap~$0$.
Let $s \in K$ such that $vs \in \Gamma^< \setminus \Psi$.
Take $a$ transcendental over $K$ with $a^\dagger = s$.
Then the differential field $K_a \coloneqq K(a^{\Q})$ can be equipped with a unique valuation making $K_a$ a pre-$\d$-valued extension of $K$ of $H$-type such that $a \succ 1$.
With this valuation, $K_a$ has gap~$0$ and satisfies:
\begin{enumerate}
    \item\label{ex:adjexpint:res} $\res(K_a)=\res(K)$;
    \item\label{ex:adjexpint:daggers} for every $f \in K_a^{\succ 1}$, either $f^\dagger \sim g^\dagger$ for some $g \in K^{\succ 1}$ or $f^\dagger \sim es$ for some $e \in \Q^{>}$;
    \item\label{ex:adjexpint:ac} the asymptotic couple $(\Gamma_a, \psi_a)$ of $K_a$ satisfies $\Gamma_a = \Gamma \oplus \Q\alpha$, with $\alpha = v(a)<0$ and $[\alpha] \notin [\Gamma]$, and $\psi_a(\gamma + e\alpha) = \min\{\psi(\gamma), v(es)\}$ for all $\gamma \in \Gamma$ and $e \in \Q$;
    \item\label{ex:adjexpint:preH} if $K$ is a pre-$H$-field with $s>0$, then $K_a$ can be equipped with a unique ordering making $K_a$ an ordered field extension of $K$ such that the valuation ring $\ca O_a$ of $K_a$ is convex; with this ordering, $K_a$ is a pre-$H$-field.
\end{enumerate}
\end{lem}
\begin{proof}
We first explain how we adjoin a new element $\alpha$ to the asymptotic couple $(\Gamma, \psi)$, which later will satisfy $\alpha=v(a)$.
Let $D_1\coloneqq\{\gamma \in \Gamma^< : \psi(\gamma)<vs\}$ and $D_2\coloneqq\{\gamma \in \Gamma^{\les} : \psi(\gamma)>vs\}$, and take $\alpha$ such that $D_1<\alpha<D_2$ and $[D_1]>[\alpha]>[D_2]$.
Recall that since $(\Gamma, \psi)$ has gap~$0$, for any $\gamma \in \Gamma^<$, we have $\gamma<\psi(\gamma)<0$.
In particular, $vs \in D_2$ and $[\alpha]>[vs]$.
Set $\Gamma_a \coloneqq \Gamma \oplus \Q\alpha$.
Let $\gamma \in \Gamma$ and $e \in \Q$, and define $\gamma + e\alpha>0$ if either $[\gamma]>[e\alpha]$ and $\gamma>0$ or $[e\alpha]>[\gamma]$ and $e<0$; if $e \neq 0$, then $\gamma + e\alpha>0$ if and only if either $[\gamma]\in[D_1]$ and $\gamma>0$ or $[\gamma]\in[D_2]$ and $e<0$.
This makes $\Gamma_a$ an ordered abelian group extending $\Gamma$ such that $\alpha<0$.
Now we extend $v \colon K^{\x} \to \Gamma$ to the unique valuation $v \colon K_a^{\x} \to \Gamma_a$ satisfying $v(a)=\alpha$.
More explicitly, every element of $K_a^{\x}$ is of the form $p/q$ for some $p=\sum_{e \in \Q} p_e a^e$ and $q=\sum_{e \in \Q} q_e a^e$, where all $p_e, q_e \in K$ with only finitely many of them nonzero and at least one $p_e$ and one $q_e$ are nonzero.
For such $p$ and $q$, we set \[v(p/q)\ \coloneqq\ \min\{v(p_e)+e\alpha : p_e \neq 0\} - \min\{v(q_e)+e\alpha : q_e \neq 0\},\]
and it is routine to check that this map is a valuation.
If $w$ is any valuation on $K_a$ making it a pre-$\d$-valued extension of $K$ of $H$-type such that $wa<0$, then $D_1<wa<D_2$ and $[D_1]>[wa]>[D_2]$, and it follows that $w = v$ after identifying their value groups via $wa \mapsto \alpha$.

For every $f \in K_a^{\x}$, there are $g \in K^{\x}$ and a unique $e \in \Q$ such that $f \sim ga^e$.
Item \ref{ex:adjexpint:res} follows.
Now we use \cite[Lemma~10.1.19]{adamtt} with $T = K^{\x}a^{\Q}$ and $L = K_a$ to show that $K_a$ is pre-$\d$-valued.
Let $t=ga^e \in T$, where $g \in K^{\x}$ and $e \in \Q$, and suppose that $t\prec 1$.
It suffices to show that $t^\dagger \succ 1$ and $t' \prec 1$.
We may assume that $e \neq 0$.
Then $v(g^\dagger) \neq vs=v(es)$, so $v(t^\dagger)=v(g^\dagger+es) = \min\{v(g^\dagger), vs\}<0$.
If $[vg]<[\alpha]$, then $e<0$ and $v(g^\dagger)>vs$, and so
\[v(t')\ =\ v(t^\dagger)+vt\ =\ vs+e\alpha+vg\ >\ 0.\]
If $[vg]>[\alpha]$, then similarly $v(t') = vg'+e\alpha > 0$.
Hence $K_a$ is pre-$\d$-valued with gap~$0$.
To prove \ref{ex:adjexpint:daggers}, let $f \in K_a^{\succ 1}$ and take $g \in K^{\x}$ and $e \in \Q$ with $f \sim ga^e$.
Then $f^\dagger \sim g^\dagger + es$, so likewise either $f^\dagger \sim g^\dagger$ and $vg<0$ or $f^\dagger \sim es$ and $e>0$.
This also finishes the proof of~\ref{ex:adjexpint:ac}.

Now we show that $(\Gamma_a, \psi_a)$ is of $H$-type.
Let $\gamma_1, \gamma_2 \in \Gamma$ and $e_1, e_2 \in \Q$ and suppose that $\gamma_1+e_1\alpha<\gamma_2+e_2\alpha<0$; we need to show that $\psi(\gamma_1+e_1\alpha) \les \psi(\gamma_2+e_2\alpha)$.
If $[\gamma_i]>[e_i\alpha]$ for $i=1,2$, then $[\gamma_1] \ges [\gamma_2]$ and $\psi(\gamma_1+e_1\alpha)=\psi(\gamma_1) \les \psi(\gamma_2)=\psi(\gamma_2+e_2\alpha)$.
If $[\gamma_i]<[e_i\alpha]$ for $i=1,2$, then $\psi(\gamma_1+e_1\alpha)=vs=\psi(\gamma_2+e_2\alpha)$.
If $[\gamma_1]>[e_1\alpha]$ and $[\gamma_2]<[e_2\alpha]$, then $\psi(\gamma_1+e_1\alpha)=\psi(\gamma_1)<vs=\psi(\gamma_2+e_2\alpha)$.
Finally, if $[\gamma_1]<[e_1\alpha]$ and $[\gamma_2]>[e_2\alpha]$, then $[e_1\alpha]>[\gamma_2]$ and $\psi(\gamma_1+e_1\alpha)=vs<\psi(\gamma_2)=\psi(\gamma_2+e_2\alpha)$ (in this case, $e_2=0$).

To show \ref{ex:adjexpint:preH}, suppose that $K$ is a pre-$H$-field and $s>0$.
Let $f \in K_a^{\x}$, and take $g \in K^{\x}$ and $e \in \Q$ with $f \sim ga^e$.
For $K_a$ to be an ordered field extension of $K$, we must have $a^e>0$ for all $e \in \Q$, and hence there is at most one way to define the ordering: $f>0$ if and only if $g>0$.
First, this is independent of the choice of $g$: if $f \sim g_1a^{e}$ with $g_1 \in K^{\x}$, then $g_1 \sim g$, so $g_1>0$ if and only if $g>0$.
To see that this makes $K_a$ an ordered field, let $f_1, f_2 \in K_a^{\x}$ with $f_1, f_2>0$.
For $i=1,2$, take $g_i \in K^{\x}$ and $e_i \in \Q$ such that $f_i \sim g_ia^{e_i}$, so $g_i>0$.
Then $f_1f_2 \sim g_1g_2a^{e_1+e_2}>0$.
Similarly, $f^2>0$ for all $f \in K_a^{\x}$.
If $f_1\prec f_2$, then $f_1+f_2 \sim f_2 > 0$; the case $f_1 \succ f_2$ is symmetric.
Suppose that $f_1 \asymp f_2$, so $e_1=e_2$ and $g_1 \asymp g_2$.
Then since $g_1, g_2>0$, we have $g_1+g_2 \asymp g_1$ and thus $f_1+f_2 \sim (g_1+g_2)a^{e_1}>0$.

To see that $\ca O_a$ is convex with respect to this ordering, we show that its maximal ideal $\cao_a$ satisfies $\cao_a \subseteq (-1,1)$:
If $f\in \cao_a$, then $1 \pm f \sim 1a^0>0$.
To show that $K_a$ is a pre-$H$-field, it remains to check that $f^\dagger>0$ whenever $f \in K_a$ satisfies $f>\ca O_a$, which follows from the proof of \ref{ex:adjexpint:daggers} and the fact that $K$ is a pre-$H$-field.
\end{proof}

\begin{prop}\label{ex:noexpint}
Let $\bm k$ be an ordered differential field that is linearly surjective and real closed.
Then there is a $\d$-henselian, real closed pre-$H$-field $K$ such that $\Psi = \Gamma^<$ and $\res(K) \cong \bm k$, but $(K^{\x})^\dagger \neq K$.
\end{prop}
\begin{proof}
We construct increasing sequences of pre-$H$-fields with gap~$0$ $(K_n)$ and sets $(\f M_n)$ satisfying, for all $n$:
\begin{enumerate}
    \item $K_n = \bm k(\f M_{n+1})$;
    \item $\f M_{n+1}$ is a divisible monomial group for $K_n$;
    \item $\f m>0$ for all $\f m \in \f M_n$;
    \item $\f M_n^{\succ 1} \subseteq (\f M_{n+1}^{\succ 1})^\dagger$;
    \item\label{noexpintiv} for every $f \in K_n^{\succ 1}$, there are $\f m \in \f M_n^{\succ 1}$ and $e \in \Q^{>}$ such that $f^\dagger \sim e\f m$.
\end{enumerate}
In particular, each $K_n$ has ordered differential residue field naturally isomorphic to $\bm k$.

Let $(b_i)_{i \in \Q}$ be algebraically independent over $\bm k$ and set $\f M_0 \coloneqq \{b_i : i \in \Q\}$.
Let $k$ range over $\N^{\ges 1} \coloneqq \{ n : n\ges 1 \}$ and $\bm i = (i_1, \dots, i_k)$ and $\bm e = (e_1, \dots, e_k)$ over $\Q^{k}$ with $i_1<\dots<i_k$.
We set $b^{\bm e}_{\bm i} \coloneqq \prod_{j=1}^{k} b_{i_j}^{e_j}$ and define $\f M_1 \coloneqq \{b_{\bm i}^{\bm e} : \bm i, \bm e \in \Q^{k}\ \text{with}\ i_1<\dots<i_k,\ k \in \N^{\ges 1}\}$ and $K_0 \coloneqq \bm k(\f M_1)$.
Set $\Gamma_0 \coloneqq \bigoplus_{i \in \Q} \Q\beta_i$ with $(\beta_i)_{i \in \Q}$ $\Q$-linearly independent, and order $\Gamma_0$ lexicographically with each $\beta_i<0$.
That is, if $e_1 \neq 0$, define $e_1\beta_{i_1} + \dots + e_k\beta_{i_k}>0$ if $e_1<0$, so $[e_1\beta_{i_1} + \dots + e_k\beta_{i_k}] = [\beta_{i_1}]$, and $[\beta_i] > [\beta_j]$ if and only if $i<j$, for $i, j \in \Q$.

We equip $K_0$ with the unique valuation $v \colon K_0^{\x} \to \Gamma_0$ satisfying $v(u)=0$ and $v(b_i)=\beta_i$ for all $u \in \bm k^{\x}$ and $i \in \Q$.
To do so more explicitly, note that every element $y \in K_0^{\x}$ has the form $y=(\sum_{\bm e} f_{\bm e}b_{\bm i}^{\bm e})/(\sum_{\bm e} g_{\bm e}b_{\bm i}^{\bm e})$ for some $\bm i$ and some $f_{\bm e}, g_{\bm e} \in \bm k$ such that only finitely many $f_{\bm e}$ and $g_{\bm e}$ are nonzero and at least one $f_{\bm e}$ and one $g_{\bm e}$ are nonzero.
Define $v \colon K_0^{\x} \to \Gamma_0$ by $v(ub_{\bm i}^{\bm e}) \coloneqq \sum_{j=1}^{k} e_{i_j} \beta_{i_j}$, for $u \in \bm k^{\x}$,
and, for $y \in K_0^{\x}$ as above
\[
v(y)\ =\ v\left(\frac{\sum_{\bm e} f_{\bm e}b_{\bm i}^{\bm e}}{\sum_{\bm e} g_{\bm e}b_{\bm i}^{\bm e}}\right)\ \coloneqq\ \min\{v(b_{\bm i}^{\bm e}) : f_{\bm e} \neq 0\} - \min\{v(b_{\bm i}^{\bm e}) : g_{\bm e} \neq 0\}.
\]
It is routine to check that this map is a valuation.
Hence $\f M_1$ is a divisible monomial group for $K_0$.
Moreover, for all $f \in K_0^{\x}$ with $f \not\asymp 1$, there exist unique $u \in \bm k^{\x}$, $\bm i$, and $\bm e \in (\Q^{\x})^k$ such that $f \sim ub_{\bm i}^{\bm e}$; for all $f \in K_0^{\x}$ with $f \asymp 1$, there exists a unique $u \in \bm k^{\x}$ such that $f \sim u$.

We extend the derivation on $\bm k$ to $K_0$ so that $(b_i^e)^\dagger = eb_{i+1}$ for all $i, e \in \Q$.
It is easy to verify that for all $t \in T \coloneqq \{ ub_{\bm i}^{\bm e} : u \in \bm{k}^{\x},\ \bm{i}, \bm{e} \in \Q^{k}\ \text{with}\ i_1<\dots<i_k,\ k \in \N^{\ges 1} \}$, if $t \prec 1$, then $t' \prec 1$ and $t^\dagger \succ 1$.
Then by \cite[Lemma~10.1.19]{adamtt} (with $K=\bm k$ and $L=K_0$), $K_0$ is pre-$\d$-valued and has gap~$0$.
Also, for every $f \in K_0^{\succ 1}$, there exist $i, e \in \Q$ with $e > 0$ and $f^\dagger \sim eb_i$.
Hence the asymptotic couple of $K_0$ is $(\Gamma_0, \psi_0)$, where $\psi_0$ is defined by $\psi_0(e_1\beta_{i_1} + \dots + e_k\beta_{i_k}) = \beta_{i_1+1}$ if $e_1 \neq 0$, and it satisfies $\Psi_0 \coloneqq \psi(\Gamma_0^{\neq}) = v(\f M_0)$.

To extend the ordering on $\bm k$ to $K_0$, let $f \in K_0^{\x}$ and take $u \in \bm k^{\x}$ and $\f m \in \f M_1$ with $f \sim u\f m$.
Then define $f>0$ if $u>0$.
The proof that this makes $K_0$ a pre-$H$-field is similar to the corresponding part of the proof of Lemma~\ref{ex:adjexpint}.

To construct $K_1$ and $\f M_2$, enumerate $(\f M_1 \setminus \f M_0)^{\succ 1}$ by $(\f m_n)$ and set $L_{0} \coloneqq K_0$ and $\f N_{0} \coloneqq \f M_{1}$.
By \ref{noexpintiv}, $v\f m_n \in \Gamma_{0}^< \setminus \Psi_{0}$ for all $n$.
Hence by repeated applications of Lemma~\ref{ex:adjexpint}, we construct increasing sequences of pre-$H$-fields with gap~$0$ $(L_{n})$, sets $(\f N_{n})$, and elements $(\f n_n)$ such that, for all $n$:
\begin{enumerate}[label=(\alph*)]
    \item $L_{n} = \bm k(\f N_{n})$;
    \item $\f N_{n}$ is a divisible monomial group for $L_{n}$;
    \item $\f N_{n+1} = \f N_{n}\f n_n^{\Q}$;
    \item $\f n_n^\dagger = \f m_n$, $\f n_n \succ 1$, and $\f n_n^e>0$ for all $e \in \Q$;
    \item for every $f \in L_{n+1}^{\succ 1}$, either $f^\dagger \sim g^\dagger$ for some $g \in L_{n}^{\succ 1}$ or $f^\dagger \sim e\f m_n$ for some $e \in \Q^{>}$;
    \item $v\f m_m \notin \Psi_{L_{n}}$ for all $m \ges n$.
\end{enumerate}
Now we set $K_1 \coloneqq \bigcup_n L_{n}$ and $\f M_2 \coloneqq \bigcup_n \f N_{n}$.
Iterating this procedure yields the desired sequences $(K_n)$ and $(\f M_n)$.
Set $K \coloneqq \bigcup_{n} K_n = \bm k(\f M)$, where $\f M \coloneqq \bigcup_{n} \f M_n$.
Then $K$ is a pre-$H$-field with gap~$0$ such that:
\begin{enumerate}[label=(\roman*)]
    \item\label{Kres} $K$ has ordered differential residue field (isomorphic to) $\bm k$;
    \item\label{Kval} $\Gamma$ is nontrivial and divisible, and satisfies $\Gamma^< = \Psi$;
    \item\label{Kdaggerchar} for every $f \in K^{\succ 1}$, there are $\f m \in \f M^{\succ 1}$ and $e \in \Q^{>}$ such that $f^\dagger \sim e\f m$.
\end{enumerate}

We have not yet needed the assumptions on $\bm k$.
Using that $\bm k$ is linearly surjective, we pass to the $\d$-henselization of $K$, an immediate pre-$H$-field extension of $K$, thereby arranging that $K$ is $\d$-henselian while preserving \ref{Kres}--\ref{Kdaggerchar}.
Then $K$ is real closed, since $\bm k$ is real closed, $\Gamma$ is divisible, and $K$ is henselian.
Finally, by \ref{Kdaggerchar} we have $u\f m \notin (K^{\x})^\dagger$ for all $u \in \bm k \setminus \Q$ and $\f m \in \f M^{\succ 1}$.
\end{proof}

By taking a $\bm k$ as above that additionally has exponential integration, we obtain:
\begin{cor}
There exists a $\d$-henselian, real closed pre-$H$-field $K$ such that $\Psi = \Gamma^<$ and $(\res(K)^{\x})^\dagger = \res(K)$, but $(K^{\x})^\dagger \neq K$.
\end{cor}

\begin{cor}\label{ex:notelem}
There exist $\d$-henselian, real closed pre-$H$-fields $K_1$ and $K_2$ such that $\res(K_1) \cong \res(K_2)$ \textup{(}as ordered differential fields\textup{)} and $(\Gamma_1, \psi_1) \equiv (\Gamma_2, \psi_2)$, but $(K_1^{\x})^\dagger \neq K_1$ and $(K_2^{\x})^\dagger = K_2$; in particular, $K_1 \not\equiv K_2$.
\end{cor}
\begin{proof}
Let $\bm k$ be an ordered differential field that is linearly surjective and expint-closed.
Then Proposition~\ref{ex:noexpint} yields a $\d$-henselian, real closed pre-$H$-field $K_1$ with ordered differential residue field isomorphic to $\bm k$, nontrivial divisible asymptotic couple $(\Gamma_1, \psi_1)$ satisfying $\Gamma_1^< = \Psi_1$, and $(K_1^{\x})^\dagger \neq K_1$.
Now let $K_2$ be the $\d$-Hensel-Liouville closure of $K_1$ by Theorem~\ref{dhlclosure}.
Then $\res(K_2)=\res(K_1)$ and the nontrivial divisible asymptotic couple $(\Gamma_2, \psi_2)$ of $K_2$ satisfies $\Gamma_2^< = \Psi_2$.
Hence $(\Gamma_1, \psi_1) \equiv (\Gamma_2, \psi_2)$ by Corollary~\ref{ac:complete}.
\end{proof}

\section*{Acknowledgements}
Thanks are due to Lou van den Dries for helpful discussions and for comments on an earlier draft of this paper, to Anton Bernshteyn and Chris Miller for suggestions on an earlier draft of this paper, and to Matthias Aschenbrenner for comments related to Proposition~\ref{NIPtransfer}. I am also grateful to the anonymous referee for carefully reading the paper and suggesting many improvements.

% \section*{Funding}
This research was funded in whole or in part by the Austrian Science Fund (FWF) 10.55776/ESP450. For open access purposes, the author has applied a CC BY public copyright licence to any author accepted manuscript version arising from this submission.
This material is based upon work supported by the National Science Foundation under Grant No.\ DMS-2154086.
Presentation of some of this research at DART X was supported by NSF Grant No.\ DMS-1952694.

\printbibliography

\end{document}